\newtheorem{defin}{Definition}
\newtheorem{lemma}{Lemma}
\newtheorem{prop}{Proposition}
\newtheorem{theo}{Theorem}
\newenvironment{proof}{\medskip\par\noindent{\bf Proof}}{\hfill $\Box$
\medskip\par}
\begin{document}
\title{Multiscale Gevrey asymptotics in boundary layer expansions for some initial value problem with
merging turning points}
\author{{\bf A. Lastra, S. Malek}\\
University of Alcal\'{a}, Departamento de F\'{i}sica y Matem\'{a}ticas,\\
Ap. de Correos 20, E-28871 Alcal\'{a} de Henares (Madrid), Spain,\\
University of Lille 1, Laboratoire Paul Painlev\'e,\\
59655 Villeneuve d'Ascq cedex, France,\\
{\tt alberto.lastra@uah.es}\\
{\tt Stephane.Malek@math.univ-lille1.fr }}
\date{July, 7 2017}
\maketitle
\thispagestyle{empty}
{ \small \begin{center}
{\bf Abstract} 
\end{center}
We consider a nonlinear singularly perturbed PDE leaning on a complex perturbation parameter $\epsilon$.
The problem possesses an irregular singularity in time at the origin and involves a set of so-called
moving turning points merging to 0 with $\epsilon$. We construct outer solutions for time located in
complex sectors that are kept away from the origin at a distance equivalent to a positive power of
$|\epsilon|$ and we
build up a related family of sectorial holomorphic inner solutions for small time inside some boundary layer. We
show that both outer and inner solutions have Gevrey asymptotic expansions as $\epsilon$ tends to 0 on
appropriate sets of sectors that cover a neighborhood of the origin in $\mathbb{C}^{\ast}$. We observe
that their Gevrey orders are distinct in general.

\noindent Key words: asymptotic expansion, Borel-Laplace transform, Fourier transform, Cauchy problem, formal power series,
nonlinear integro-differential equation, nonlinear partial differential equation, singular perturbation. 2000 MSC: 35C10, 35C20.}
\bigskip \bigskip

\section{Introduction}

Within this paper, we focus on a family of nonlinear singularly perturbed equations sharing the shape
\begin{multline}
Q(\partial_{z})P(t,\epsilon)u(t,z,\epsilon) +
P_{1}(t,\epsilon)Q_{1}(\partial_{z})u(t,z,\epsilon)Q_{2}(\partial_{z})u(t,z,\epsilon) \\
= f(t,z,\epsilon) +
P_{2}(t,\epsilon,\partial_{t},\partial_{z})u(t,z,\epsilon) \label{main_PDE_u_intro}
\end{multline}
where $Q,Q_{1},Q_{2},P,P_{1},P_{2}$ stand for polynomials with complex coefficients and $f(t,z,\epsilon)$
denotes a holomorphic function near the origin regarding $t$ and $\epsilon$ in $\mathbb{C}$ and on some
horizontal strip $H_{\beta} = \{z \in \mathbb{C} / |\mathrm{Im}(z)| < \beta \}$ for some $\beta>0$ w.r.t $z$.

This work is a continuation of a study initiated in the contribution \cite{ma2}. Namely, in \cite{ma2}
we considered an equation of the form (\ref{main_PDE_u_intro}) in the case when $P(0,\epsilon)$ is
identically vanishing near 0 that corresponds to a situation
which is analog to one of a \emph{turning point} at $t=0$ (we refer to \cite{wa} and \cite{frsc} for a detailed
outline of this terminology in the context of ODEs). The requirements imposed on the main equation forced the
polynomial $t \mapsto P(t,\epsilon)$ to have an isolated root at $t=0$ whereas the other moving roots
depending upon $\epsilon$ stay apart from a fixed disc enclosing the origin. Under
suitable constraints, we established the existence of a set of actual holomorphic solutions
$y_{p}(t,z,\epsilon)$, meromorphic at $\epsilon=0$ and $t=0$, $0 \leq p \leq \varsigma-1$, for some
integer $\varsigma \geq 2$, defined on domains $\mathcal{T} \times H_{\beta} \times \mathcal{E}_{p}$,
for some prescribed open bounded sector $\mathcal{T}$ at centered at 0 and
$\mathcal{E} = \{ \mathcal{E}_{p} \}_{0 \leq p \leq \varsigma-1}$ is some well chosen set of open bounded
sectors which covers a neighborhood of 0 in $\mathbb{C}^{\ast}$. Furthermore, for convenient integers
$a,b \in \mathbb{Z}$ we have shown that all the functions $\epsilon^{a}t^{b}y_{p}(t,z,\epsilon)$ share
w.r.t $\epsilon$ a common asymptotic expansion
$\hat{y}(t,z,\epsilon) = \sum_{n \geq 0} y_{n}(t,z) \epsilon^{n}$ with bounded holomorphic coefficients
on $\mathcal{T} \times H_{\beta}$. This asymptotic expansion turns out to be of Gevrey type whose order depends
both on data relying on the highest order term of $P_{2}$ which is of irregular type in the sense of \cite{man}
displayed as $\epsilon^{\Delta}t^{\delta q + m}\partial_{t}^{\delta}R(\partial_{z})$ for some positive
integers $\Delta,\delta,q,m>0$, $R$ some polynomial and on the two polynomial $P$ and $P_{1}$ that frame
the turning point at $t=0$.

In this work, we aim attention at a different situation regarding the localization of turning points that is not
covered in our previous study. Namely, we assume that $t \mapsto P(t,\epsilon)$ does not vanish at $t=0$
but possess at least one root leaning on $\epsilon$, a so-called \emph{moving turning point}, which merges to the origin
as $\epsilon$ tends to 0 (see Lemma 1). Our target is to carry out a comparable statement as in \cite{ma2}
namely the construction of sectorial holomorphic solutions and asymptotic expansions of Gevrey type as $\epsilon$
tends to the origin. Nevertheless, the whole picture looks rather different from our previous investigation.
More precisely, according to the presence of the shrinking turning points, the solutions we construct by means of
Laplace and inverse Fourier transforms are only defined w.r.t $t$ on some boundary layer domains which turn out
to be sectors with vertex at 0, with radius that depends on some positive power of $|\epsilon|$ and 
approaches 0 with $\epsilon$. Besides, we can exhibit another family of solutions of (\ref{main_PDE_u_intro})
provided that $t$ remains away from the origin on some unbounded sector with inner radius being proportional
to some positive power of $|\epsilon|$, tending to 0 with $\epsilon$.

In order to explain the manufacturing of these solutions, we need to specify the nature of the forcing term
$f(t,z,\epsilon)$ which is constituted with two terms, one piece is polynomial in $t,\epsilon$ with bounded
holomorphic coefficients on any strip $H_{\beta'} \varsubsetneq H_{\beta}$ with $0 < \beta' < \beta$ and
the other part represented as a function $F^{\theta_{F}}(t,z,\epsilon)$ which solves a singularly perturbed
nonhomogeneous linear ODE of the form
$$ F_{2}(-\epsilon^{\gamma}\partial_{t})F^{\theta_{F}}(t,z,\epsilon) = I_{\theta_{F}}(t,z,\epsilon) $$
for some polynomial $F_{2}(x)$ with complex coefficients not vanishing at $x=0$, some real number
$\gamma>1/2$ and $I_{\theta_{F}}$ some rational function in $t,\epsilon$ and bounded
holomorphic w.r.t $z$ on $H_{\beta'}$ (see Remark 1). This equation is of irregular type at $t=\infty$ and
regular at $t=0$ (we indicate some text book on complex ODEs such as \cite{ba2}, \cite{hssi} for a definition of
these classical notions). According to this last assumption, we stress the fact that the solutions described above
actually solve a PDE with rational coefficients in $t,\epsilon$ and bounded holomorphic w.r.t $z$ on
$H_{\beta'}$ with a shape similar to (\ref{main_PDE_u_intro}) as displayed in Remark 2.

Our first main construction can be outlined as follows. Under appropriate restriction on the shape of
(\ref{main_PDE_u_intro}), we can select a set $\mathcal{E}^{\infty} =
\{ \mathcal{E}_{j}^{\infty} \}_{0 \leq j \leq \iota-1}$ of bounded sectors with aperture
slightly larger than $\pi/\gamma$, for some $\iota \geq 2$, which covers
a neighborhood of 0 in $\mathbb{C}^{\ast}$ and pick up directions $\{ \mathfrak{u}_{j} \}_{0 \leq j \leq \iota-1}$
in $\mathbb{R}$ for which a family of solutions $v^{\mathfrak{u}_{j}}(t,z,\epsilon)$ of the main equation
(\ref{main_PDE_u_intro}), for a specific choice of $\theta_{F}=\mathfrak{u}_{j}$ in the forcing term
$F^{\theta_{F}}$ described above, can be
built up as a usual Laplace and Fourier inverse transform
$$ v^{\mathfrak{u}_{j}}(t,z,\epsilon) =
\frac{\epsilon^{\gamma_{0}}}{(2\pi)^{1/2}} \int_{-\infty}^{+\infty} \int_{L_{\mathfrak{u}_{j}}}
W^{\mathfrak{u}_{j}}(u,m,\epsilon) \exp( -\frac{t}{\epsilon^{\gamma}}u ) e^{izm} du dm $$
along the halfline $L_{\mathfrak{u}_{j}} = \mathbb{R}_{+}e^{i \mathfrak{u}_{j}}$, for some real number
$\gamma_{0}$, where $W^{\mathfrak{u}_{j}}(u,m,\epsilon)$ represents a function with at most exponential growth
of order 1 on a sector enclosing $L_{\mathfrak{u}_{j}}$ w.r.t $u$, with exponential decay w.r.t $m$ on
$\mathbb{R}$ and analytic dependence on $\epsilon$ near 0. In addition, for each fixed
$\epsilon \in \mathcal{E}_{j}^{\infty}$, the restriction
$(t,z) \mapsto v^{\mathfrak{u}_{j}}(t,z,\epsilon)$ is bounded and holomorphic on
$\mathcal{T}_{\epsilon}^{\infty} \times H_{\beta'}$, where $\mathcal{T}_{\epsilon}^{\infty}$ stands for an
unbounded sector with inner radius proportional to $|\epsilon|^{\gamma - \Gamma}$ for some real number
$0 \leq \Gamma < \gamma$ (Theorem 2). Furthermore, we explain why the functions
$\epsilon^{-\gamma_{0}}v^{\mathfrak{u}_{j}}(t,z,\epsilon)$ own w.r.t $\epsilon$ a common asymptotic expansion
$\hat{O}_{t}(\epsilon) = \sum_{k \geq 0} O_{t,k}\epsilon^{k}$ whose coefficients $O_{t,k}$ represent
bounded holomorphic functions on $H_{\beta'}$ which can be called \emph{outer expansions} owing to the fact
that it is valid for any fixed value of $t$ in the vicinity of 0 as $\epsilon$ tends to 0. Accordingly, we call
$v^{\mathfrak{u}_{j}}(t,z,\epsilon)$ the \emph{outer solutions} of (\ref{main_PDE_u_intro}). Besides, we
can indicate the nature of this asymptotic expansion that turns out to be of Gevrey order (at most)
$1/\gamma$, ensuring that $\epsilon^{-\gamma_{0}}v^{\mathfrak{u}_{j}}$ can be labeled as $\gamma-$sum of
$\hat{O}_{t}(\epsilon)$ on $\mathcal{E}_{j}^{\infty} \cap D(0,\sigma_{t})$ for some radius
$\sigma_{t}$ outlined in (\ref{defin_sigma_t}) (Theorem 3). We may notice that this Gevrey order
$1/\gamma$ is substantially related to the Stokes phenomena stemming from the solutions
$F^{\mathfrak{u}_{j}}(t,z,\epsilon)$ of the ODE (\ref{ODE_F_theta_F}).

Now, we proceed to the description of what we call the \emph{inner solutions} of (\ref{main_PDE_u_intro}). Submitted
to additional requirements on the coefficients of (\ref{main_PDE_u_intro}), we can choose a set
$\mathcal{E} = \{ \mathcal{E}_{p} \}_{0 \leq p \leq \varsigma-1}$ built up with bounded sectors
with opening barely larger than $\frac{\pi}{\chi \kappa}$ for some integer $\kappa \geq 1$ and real
number $\chi > \frac{1}{2\kappa}$, for some $\varsigma \geq 2$, which covers a neighborhood of 0 in
$\mathbb{C}^{\ast}$ and raise a set of real directions $\{ \mathfrak{d}_{p} \}_{0 \leq p \leq \varsigma-1}$
such that for each direction $\mathfrak{u}_{j}$ coming up from one single outer solution
$v^{\mathfrak{u}_{j}}$, $0 \leq j \leq \iota-1$, one can construct a family of solutions
$u^{\mathfrak{d}_{p},j}(t,z,\epsilon)$ to (\ref{main_PDE_u_intro}), $0 \leq p \leq \varsigma-1$, that is
represented as a Laplace transform of some order $\kappa$ and Fourier inverse transform
$$
u^{\mathfrak{d}_{p},j}(t,z,\epsilon) =
\epsilon^{-m_{0}}\frac{\kappa}{(2\pi)^{1/2}}\int_{-\infty}^{+\infty}
\int_{L_{\mathfrak{d}_{p}}} \omega_{\kappa}^{\mathfrak{d}_{p},j}(u,m,\epsilon)
\exp( -( \frac{u}{\epsilon^{\alpha}t})^{\kappa} ) e^{izm} \frac{du}{u} dm
$$
where the inner integration is performed along the halfline $L_{\mathfrak{d}_{p}} =
\mathbb{R}_{+}e^{i \mathfrak{d}_{p}}$, for some positive integer $m_{0} \geq 1$, negative rational number
$\alpha < 0$ and where $\omega_{\kappa}^{\mathfrak{d}_{p},j}(u,m,\epsilon)$ stands for a function with at most
exponential growth of order $\kappa$ on a sector containing $L_{\mathfrak{d}_{p}}$ w.r.t $u$, with
exponential decay w.r.t $m$ on $\mathbb{R}$ and analytic dependence w.r.t $\epsilon$ in the vicinity
of the origin. Besides, for each fixed $\epsilon \in \mathcal{E}_{p}$, the projection
$(t,z) \mapsto u^{\mathfrak{d}_{p},j}(t,z,\epsilon)$ is bounded and holomorphic on
$\mathcal{T}_{\epsilon,\chi - \alpha} \times H_{\beta'}$, for $\mathcal{T}_{\epsilon,\chi-\alpha} =
X\epsilon^{\chi - \alpha}$ where $X$ stands for some fixed bounded sector centered at 0. (Theorem 1). Moreover,
we justify why the functions
$\epsilon^{m_0}u^{\mathfrak{d}_{p},j}(t,z,\epsilon)$ admit w.r.t $\epsilon$ a common asymptotic
expansion $\hat{I}^{j}(\epsilon) = \sum_{k \geq 0} I_{k}^{j} \epsilon^{k}$ with coefficients
$I_{k}^{j}$ belonging to a Banach space of bounded holomorphic functions on $X \times H_{\beta'}$ that may be
called \emph{inner expansion} since it is only legitimated for $t$ on the boundary layer set
$\mathcal{T}_{\epsilon,\chi - \alpha}$ which shrinks to 0 with $\epsilon$. We can also specify the type of
asymptotic expansion which turns out to be of Gevrey order (at most) $\frac{1}{\chi \kappa}$. As a result,
$\epsilon^{m_0}u^{\mathfrak{d}_{p},j}(t,z,\epsilon)$ can be identified as $\chi \kappa-$sum of
$\hat{I}^{j}(\epsilon)$ on $\mathcal{E}_{p}$ (Theorem 3). By construction, the integer $\kappa$ crops up
in the highest order term of the operator $P_{2}$ which is assumed to be of the form
$\epsilon^{\Delta_{D}}t^{\delta_{D}(\kappa+1)}\partial_{t}^{\delta_{D}}R_{D}(\partial_{z})$ for some
integers $\Delta_{D} \geq 1$, $\delta_{D} \geq 2$ and a polynomial $R_{D}$. The real number
$\chi$ is in particular related by a set of inequalities to the integers $\kappa$,$\Delta_{D}$,$\delta_{D}$,
the powers of $\epsilon$ and $t$ in $P,P_{1}$, to the real number $\gamma$ and the forcing term
$f(t,z,\epsilon)$. As outgrowth, we observe that this Gevrey order $\frac{1}{\chi \kappa}$ involves informations
emanating from the moving turning points and the irregularity of the operator $P_{2}$ at $t=0$.

These so-called inner and outer expansions come into play in vast literature on what is commonly named
\emph{matched asymptotic expansions}. For further details on this subject, we refer to classical textbooks
such as \cite{beor}, \cite{ec}, \cite{la}, \cite{om}, \cite{sk}, \cite{wa}. We point out the recent work by
A. Fruchard and R. Sch\"{a}fke on composite asymptotic expansions, see \cite{frsc}, which provides a solid
bedrock for the method of matching and furnishes hands-on criteria for the study of the nature of these asymptotic
expansions which can be shown of Gevrey type with the same order for both inner and outer expansions for
several families of
singularly perturbed ODEs. In our work, we observe however an interesting situation in which the Gevrey order
of the outer and of the inner expansions turn out to be different in general (see the two examples after Theorem 3).
Nevertheless, we observe a scaling gap that prevents our inner and outer solutions to share a common domain in time
$t$ for all $\epsilon$ small enough, see Remark 3. More work is needed if one wants to analytically continue and
match our inner and outer solutions. This stays beyond the scope of our approach and we leave it for future inspection.

It is worthwhile noting that a similar phenomenon of parametric multiple scale asymptotics related to moving
turning points has been observed in a recent work \cite{sutak} by K. Suzuki and Y. Takei for singularly perturbed second order
ODEs of the form
$$ \epsilon^{2}\psi''(z,\epsilon) = (z - \epsilon^{2}z^{2})\psi(z,\epsilon) $$
whose moving turning point $z=1/\epsilon^{2}$ tends to infinity which turns out to be an irregular singularity of
the equation. In particular, they have shown that the power series part $\hat{\varphi}_{\pm}(z,\epsilon)$
of its WKB solution $\hat{\psi}_{\pm}(z,\epsilon)=\exp( \pm \frac{1}{\epsilon} \int^{z} \sqrt{z} dz )
\hat{\varphi}_{\pm}(z,\epsilon)$ presents a double scale structure of Gevrey order $1/4$ and $1$ for all fixed $z$,
in being $(4,1)-$multisummable w.r.t $\epsilon$ except for a finite number of singular directions. Furthermore,
a second example involving three distinct Gevrey levels has been worked out by Y. Takei in \cite{tak}.\medskip

\noindent The paper is organized as follows.\\
In Section 2, after recalling some ground facts about Fourier transforms acting on spaces of functions with
exponential decay on $\mathbb{R}$, we disclose the main problem (\ref{main_PDE_u}) of our study.\\
In Section 3, we build up our inner solutions. We start by reminding the definition and first properties of our
Borel-Laplace transforms of order $k>0$. Then, we redefine some Banach spaces with exponential growth on sectors
of order $\kappa$ and exponential decay on the real line as introduced in our previous work \cite{ma2}. In Section
3.3, we search for conjectural time rescaled formal solutions and present the convolution equation satisfied by
their Borel transforms. In Section 3.4, we solve this latter convolution problem within the Banach spaces mentioned
above with the help of a fixed point procedure. In the last subsection, we construct a family of actual
holomorphic solutions to (\ref{main_PDE_u}) related to a good covering in $\mathbb{C}^{\ast}$ w.r.t $\epsilon$, for
small time $t$ belonging to an $\epsilon-$depending boundary layer.\\
In Section 4, we shape our outer solutions. We begin with the description of basic operations on classical
Laplace transforms. Then, we introduce Banach spaces of functions with exponential growth of order 1 on sectors
and exponential decay on $\mathbb{R}$ which are a slender modification of the ones described in Section 3.
In Section 4.3, we seek for speculative solutions of (\ref{main_PDE_u}) represented as classical Laplace and
inverse Fourier transforms and we exhibit a related nonlinear convolution equation (\ref{main_conv_eq_W}) in the Borel
plane and Fourier space. In Section 4.4, we find solutions of (\ref{main_conv_eq_W}) located in the Banach spaces
quoted above using again a fixed point argument. In the ending subsection, for a suitable good covering
in $\mathbb{C}^{\ast}$ w.r.t $\epsilon$ we distinguish a set of holomorphic solutions to
(\ref{main_PDE_u}) for both large time and small time $t$ kept distant from the origin by a quantity
proportional to a positive power of $|\epsilon|$.\\
In Section 5, we investigate the asymptotic expansions of the inner and outer solutions. We first bring to mind
the Ramis-Sibuya cohomological approach for $k-$summability of formal series. Then, we discuss the existence
of a common asymptotic expansion of Gevrey order $\frac{1}{\chi \kappa}$ for the inner solutions and of Gevrey
order $1/\gamma$ for the outer solutions on the corresponding coverings w.r.t $\epsilon$.

\section{Outline of the main problem}

\subsection{Fourier transforms}
In this subsection, we recall without proofs some properties of the inverse Fourier
transform acting on continuous functions with exponential decay on $\mathbb{R}$, see \cite{lama1},
Proposition 7 for more details.

\begin{defin} Let $\beta > 0$ and $\mu > 1$ be real numbers. We denote $E_{(\beta,\mu)}$ the
vector space of functions
$h : \mathbb{R} \rightarrow \mathbb{C}$ such that
$$ ||h(m)||_{(\beta,\mu)} = \sup_{m \in \mathbb{R}} (1+|m|)^{\mu}\exp(\beta|m|)|h(m)| $$
is finite. The space $E_{(\beta,\mu)}$ endowed with the norm $||.||_{(\beta,\mu)}$ becomes a Banach space.
\end{defin}

\noindent As stated in Proposition 5 from \cite{lama1}, we notice that

\begin{prop} Let $Q_{1}(X),Q_{2}(X),R(X) \in \mathbb{C}[X]$ be polynomials such that
\begin{equation}
\mathrm{deg}(R) \geq \mathrm{deg}(Q_{1}) \ \ , \ \ \mathrm{deg}(R) \geq \mathrm{deg}(Q_{2}) \ \ , \ \ R(im) \neq 0,
\label{cond_R_Q1_Q2}
\end{equation}
for all $m \in \mathbb{R}$. Assume that $\mu > \max( \mathrm{deg}(Q_{1})+1, \mathrm{deg}(Q_{2})+1 )$. Then, there exists a
constant $C_{5}>0$ (depending on $Q_{1},Q_{2},R,\mu$) such that
\begin{multline}
|| \frac{1}{R(im)} \int_{-\infty}^{+\infty} Q_{1}(i(m-m_{1})) f(m-m_{1}) Q_{2}(im_{1})g(m_{1}) dm_{1} ||_{(\beta,\mu)}\\
\leq C_{5} ||f(m)||_{(\beta,\mu)}||g(m)||_{(\beta,\mu)}
\end{multline}
for all $f(m),g(m) \in E_{(\beta,\mu)}$. Therefore, $(E_{(\beta,\mu)},||.||_{(\beta,\mu)})$ becomes a Banach algebra for the product
$\star$ defined by
$$ f \star g (m) = \frac{1}{R(im)} \int_{-\infty}^{+\infty} Q_{1}(i(m-m_{1})) f(m-m_{1}) Q_{2}(im_{1})g(m_{1}) dm_{1}.$$
As a particular case, when $f,g \in E_{(\beta,\mu)}$ with $\beta>0$ and $\mu>1$, the classical convolution
product
$$ f \ast g (m) = \int_{-\infty}^{+\infty} f(m-m_{1})g(m_{1}) dm_{1} $$
belongs to $E_{(\beta,\mu)}$.
\end{prop}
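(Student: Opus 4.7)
The plan is to bound the weighted supremum norm of the left-hand side directly, reducing the claim to a polynomial-weight convolution estimate on $\mathbb{R}$. First I would invoke the pointwise bounds $|f(m-m_{1})| \leq \|f\|_{(\beta,\mu)}(1+|m-m_{1}|)^{-\mu}e^{-\beta|m-m_{1}|}$ and the analogous inequality for $g$ afforded by the definition of the norm, together with the elementary polynomial estimates $|Q_{j}(i\xi)| \leq C_{j}(1+|\xi|)^{\deg Q_{j}}$ for $j=1,2$. The exponential weights combine via the triangle inequality $|m-m_{1}|+|m_{1}| \geq |m|$, so that $e^{-\beta|m-m_{1}|}e^{-\beta|m_{1}|} \leq e^{-\beta|m|}$; this cancels the factor $e^{\beta|m|}$ brought by the target norm $\|\cdot\|_{(\beta,\mu)}$.

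Next I would exploit the hypothesis $R(im) \neq 0$ on $\mathbb{R}$ together with the polynomial nature of $R$: for $|m|$ large, $|R(im)|$ behaves like $|m|^{\deg R}$, while for $m$ in a compact set $|R(im)|$ is bounded away from zero by continuity. Combining the two regimes produces a global lower bound $|R(im)| \geq c(1+|m|)^{\deg R}$ with $c>0$. After these reductions the proof reduces to the uniform bound in $m \in \mathbb{R}$ of
\begin{equation*}
(1+|m|)^{\mu - \deg R}\int_{-\infty}^{+\infty} (1+|m-m_{1}|)^{\deg Q_{1} - \mu}(1+|m_{1}|)^{\deg Q_{2} - \mu}\, dm_{1}.
\end{equation*}

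The main step is the estimation of this convolution integral. Setting $\mu_{j} := \mu - \deg Q_{j} > 1$ by the assumption on $\mu$, I would split the integration into three regions: $\{|m_{1}| \leq |m|/2\}$, $\{|m-m_{1}| \leq |m|/2\}$, and the complement where both $|m_{1}|$ and $|m-m_{1}|$ exceed $|m|/2$. In the first region $|m-m_{1}| \geq |m|/2$, so $(1+|m-m_{1}|)^{-\mu_{1}}$ can be pulled out as $2^{\mu_{1}}(1+|m|)^{-\mu_{1}}$, leaving the integrable tail $\int(1+|m_{1}|)^{-\mu_{2}}dm_{1}<\infty$; symmetrically for the second region. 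In the third region the same pull-out trick applied to either factor yields a comparable bound. The net estimate is
\begin{equation*}
\int_{-\infty}^{+\infty}(1+|m-m_{1}|)^{-\mu_{1}}(1+|m_{1}|)^{-\mu_{2}}\,dm_{1} \leq C(1+|m|)^{\max(\deg Q_{1},\deg Q_{2})-\mu},
\end{equation*}
so the full expression is controlled by $(1+|m|)^{\max(\deg Q_{1},\deg Q_{2})-\deg R}$, which is bounded thanks to the hypothesis $\deg R \geq \max(\deg Q_{1},\deg Q_{2})$.

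The final assertion on the classical convolution $f \ast g$ is the specialisation $R = Q_{1}=Q_{2}=1$ of the same computation, in which only $\mu>1$ is needed and no new argument is required. The main obstacle throughout is the bookkeeping in the three-region split that drives the polynomial convolution estimate; once that lemma is in place, the rest of the proof amounts to direct size estimates and the triangle inequality.
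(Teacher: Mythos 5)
Your proposal is correct, and it follows exactly the standard route that the paper relies on: the paper states Proposition~1 without proof, citing \cite{lama1}, and in the parallel Proposition~5 the authors reduce the analogous estimate to the identical quantity
\begin{equation*}
\sup_{m \in \mathbb{R}}\,(1 + |m|)^{\mu - \mathrm{deg}(R)}\int_{-\infty}^{+\infty}
\frac{dm_{1}}{(1 + |m-m_{1}|)^{\mu - \mathrm{deg}(Q_1)}(1 + |m_{1}|)^{\mu - \mathrm{deg}(Q_2)}}
\end{equation*}
and invoke a convolution lemma from an earlier reference. Your argument supplies the full details of that lemma: pointwise bounds from the weighted norm, the elementary inequality $|Q_j(i\xi)|\leq C_j(1+|\xi|)^{\deg Q_j}$, the two-regime lower bound $|R(im)|\geq c(1+|m|)^{\deg R}$, the cancellation of the exponential via $|m-m_1|+|m_1|\geq|m|$, and the three-region split for the polynomial convolution. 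All steps are sound: in each region you can pull one factor out as $(1+|m|)^{-\mu_j}$ and the remaining factor is integrable since $\mu_j>1$, yielding the stated $(1+|m|)^{\max(\deg Q_1,\deg Q_2)-\mu}$ decay, which is then absorbed by the prefactor $(1+|m|)^{\mu-\deg R}$ under the degree assumption. The specialisation $R=Q_1=Q_2=1$ for the last assertion is likewise correct and needs only $\mu>1$. In short, you have given the complete proof that the paper leaves to its reference, with no gaps.
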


\begin{prop} 
1) Let $f : \mathbb{R} \rightarrow \mathbb{R}$ be a continuous function and a constant $C>0$ such that
$|f(m)| \leq C \exp(-\beta |m|)$ for all $m \in \mathbb{R}$, for some $\beta > 0$. The inverse Fourier
transform of $f$ is defined by the integral representation
$$ \mathcal{F}^{-1}(f)(x) = \frac{1}{ (2\pi)^{1/2} } \int_{-\infty}^{+\infty} f(m) \exp( ixm ) dm $$
for all $x \in \mathbb{R}$. It turns out that the function $\mathcal{F}^{-1}(f)$ extends to an analytic function
on the horizontal strip
\begin{equation}
H_{\beta} = \{ z \in \mathbb{C} / |\mathrm{Im}(z)| < \beta \}. \label{strip_H_beta}
\end{equation}
Let $\phi(m) = im f(m)$. Then, we have the commuting relation
\begin{equation}
\partial_{z} \mathcal{F}^{-1}(f)(z) = \mathcal{F}^{-1}(\phi)(z) \label{dz_fourier}
\end{equation}
for all $z \in H_{\beta}$.\\
2) Let $f,g \in E_{(\beta,\mu)}$ and let $\psi(m) = \frac{1}{(2\pi)^{1/2}} f * g(m)$, the convolution product of $f$ and $g$, for all $m \in \mathbb{R}$.
From Proposition 1, we know that $\psi \in E_{(\beta,\mu)}$. Moreover, the next formula
\begin{equation}
\mathcal{F}^{-1}(f)(z)\mathcal{F}^{-1}(g)(z) = \mathcal{F}^{-1}(\psi)(z) \label{prod_fourier}
\end{equation}
holds for all $z \in H_{\beta}$.
\end{prop}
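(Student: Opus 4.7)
The plan is to treat the two assertions independently, both by direct manipulation of the defining integral. For part 1), given $z = x + iy \in H_{\beta}$, I note that $|\exp(izm)| = \exp(-ym)$; combining with $|f(m)| \leq C\exp(-\beta|m|)$ yields the pointwise bound $|f(m)\exp(izm)| \leq C\exp(-(\beta - |y|)|m|)$. This majorant lies in $L^{1}(\mathbb{R})$ as soon as $|y| < \beta$, so the integral defining $\mathcal{F}^{-1}(f)(z)$ converges absolutely on $H_{\beta}$. On any closed sub-strip $\{|\mathrm{Im}(z)| \leq \beta'\}$ with $\beta' < \beta$, both the integrand $f(m)\exp(izm)$ and its $z$-derivative $imf(m)\exp(izm)$ are dominated by integrable functions independent of $z$, so differentiation under the integral sign applies and simultaneously yields holomorphy of $\mathcal{F}^{-1}(f)$ on $H_{\beta}$ and the commuting relation $\partial_{z} \mathcal{F}^{-1}(f) = \mathcal{F}^{-1}(\phi)$ for $\phi(m) = imf(m)$, which is (\ref{dz_fourier}).

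For part 2), the product formula is essentially a Fubini computation. Writing the product as a double integral
\[
\mathcal{F}^{-1}(f)(z)\mathcal{F}^{-1}(g)(z) = \frac{1}{2\pi} \iint_{\mathbb{R}^{2}} f(m_{1})g(m_{2}) \exp(iz(m_{1}+m_{2}))\, dm_{1}\, dm_{2}
\]
and performing the linear substitution $m = m_{1} + m_{2}$ (keeping $m_{1}$ as the second variable), the expression reshapes as
\[
\frac{1}{2\pi} \int_{\mathbb{R}} \Bigl( \int_{\mathbb{R}} f(m - m_{1}) g(m_{1})\, dm_{1} \Bigr) \exp(izm)\, dm = \mathcal{F}^{-1}(\psi)(z)
\]
by the very definition of $\psi$ and the $\frac{1}{(2\pi)^{1/2}}$ normalization chosen in the statement. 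To license Fubini for $z \in H_{\beta}$, I would invoke the exponential weights built into $E_{(\beta,\mu)}$: from the definition of the norm, $|f(m_{1})||g(m_{2})| \leq \|f\|_{(\beta,\mu)} \|g\|_{(\beta,\mu)} \exp(-\beta(|m_{1}|+|m_{2}|))$, which together with $|\exp(iz(m_{1}+m_{2}))| \leq \exp(|\mathrm{Im}(z)|(|m_{1}|+|m_{2}|))$ produces an integrable majorant on $\mathbb{R}^{2}$ whenever $|\mathrm{Im}(z)| < \beta$.

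There is no genuine obstacle here: the whole argument reduces to careful accounting of exponential weights in order to verify the hypotheses of dominated convergence (for part 1) and of Fubini's theorem (for part 2). The only subtle point is securing these majorants uniformly on compact subsets of $H_{\beta}$, which is exactly what the pointwise decay $|f(m)| \leq C\exp(-\beta|m|)$ in part 1) and membership in $E_{(\beta,\mu)}$ in part 2) are designed to deliver.
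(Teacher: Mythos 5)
The paper itself gives no proof of this proposition: it is explicitly stated as a recalled fact, with a pointer to Proposition~7 of \cite{lama1}. Your argument is correct and is the standard one. In part~1) the bound $|f(m)e^{izm}|\le C e^{-(\beta-|y|)|m|}$ for $z=x+iy$ is accurate, and restricting to closed substrips $|\mathrm{Im}(z)|\le\beta'<\beta$ gives the $z$-uniform $L^1$ majorants $C e^{-(\beta-\beta')|m|}$ and $C|m|e^{-(\beta-\beta')|m|}$ that license both holomorphy and differentiation under the integral sign; the identity $\partial_z\mathcal{F}^{-1}(f)=\mathcal{F}^{-1}(\phi)$ then drops out. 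In part~2) the linear change of variables $m=m_1+m_2$ converts the double integral into the Laplace/Fourier integral of the convolution, and the factor $\frac{1}{(2\pi)^{1/2}}$ built into the definition of $\psi$ reconciles the two normalizations $\frac{1}{2\pi}$ and $\frac{1}{(2\pi)^{1/2}}$ exactly. Your Fubini majorant is even slightly coarser than necessary (you discard the polynomial decay $(1+|m_j|)^{-\mu}$, keeping only the exponential weights), which is perfectly adequate since $\beta-|\mathrm{Im}(z)|>0$ already gives integrability on $\mathbb{R}^2$. Nothing is missing.
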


\subsection{Display of the main problem}

Let $q \geq 1$, $M,Q \geq 0$ and $D \geq 2$ be integers. For $1 \leq l \leq q$, let $k_{l}$ be a non negative
integer such that $1 \leq k_{l} < k_{l+1}$ for $l \in \{1,\ldots,q-1\}$. For all $0 \leq l \leq q$, let $m_{l}$ be
a non negative integer and $a_{l}$ be a complex number not equal to 0.
For all $0 \leq l \leq M$, we consider non negative
integers $h_{l}$, $\mu_{l}$ and a complex number $c_{l}$ such that $1 \leq h_{l} < h_{l+1}$
for $l \in \{0,\ldots,M-1\}$.
For all $0 \leq l \leq Q$, we denote $n_{l}$ and $b_{l}$ non negative integers such that
$1 \leq b_{l} < b_{l+1}$ for $l \in \{0,\ldots,Q-1 \}$. For $1 \leq l \leq D$, we set
nonnegative integers $\Delta_{l}$, $d_{l}$ and $\delta_{l}$ such that
$1 \leq \delta_{l} < \delta_{l+1}$ for $l \in \{1,\ldots,D-1 \}$.

Let $Q(X),Q_{1}(X),Q_{2}(X),R_{l}(X) \in \mathbb{C}[X]$, $1 \leq l \leq D$, be polynomials that satisfy
\begin{multline}
\mathrm{deg}(Q) = \mathrm{deg}(R_{D}) \geq \mathrm{deg}(R_{l}), \ \
\mathrm{deg}(R_{D}) \geq \max( \mathrm{deg}(Q_{1}), \mathrm{deg}(Q_{2}) ), \\
Q(im) \neq 0 \ \ , \ \ R_{D}(im) \neq 0 \label{constraints_degree_coeff_Q_Rl}
\end{multline}
for all $m \in \mathbb{R}$, all $1 \leq l \leq D-1$.

We focus on the following nonlinear singularly perturbed PDE
\begin{multline}
(\sum_{l=1}^{q} a_{l} \epsilon^{m_{l}} t^{k_l} +
a_{0}\epsilon^{m_{0}}) Q(\partial_{z}) u(t,z,\epsilon) +
(\sum_{l=0}^{M} c_{l} \epsilon^{\mu_{l}} t^{h_{l}})Q_{1}(\partial_{z})u(t,z,\epsilon)
Q_{2}(\partial_{z})u(t,z,\epsilon) \\
= \sum_{j=0}^{Q} b_{j}(z) \epsilon^{n_j} t^{b_j} + F^{\theta_{F}}(t,z,\epsilon) +
\sum_{l=1}^{D} \epsilon^{\Delta_l} t^{d_l} \partial_{t}^{\delta_l} R_{l}(\partial_{z})u(t,z,\epsilon)
\label{main_PDE_u}
\end{multline}
We make the following crucial assumption on the integers $m_{l}$, $0 \leq l \leq q$. We take for granted that
\begin{equation}
m_{0} > m_{l_{1}} \label{constraint_m0_ml} 
\end{equation}
for some $l_{1} \in \{ 1, \ldots, q \}$. Let
$P(t,\epsilon) = \sum_{l=1}^{q} a_{l} \epsilon^{m_{l}} t^{k_l} + a_{0}\epsilon^{m_{0}}$. The roots of
$t \mapsto P(t,\epsilon)$ are
called, in our context, \emph{turning points} of the equation (\ref{main_PDE_u}), with analogy to the
situation concerned with ordinary differential equations. See
\cite{frsc}, \cite{wa} for more details. In the next lemma, we supply some information about the position of some
roots of $P$. 
\begin{lemma} Under the constraint
(\ref{constraint_m0_ml}), the polynomial $t \mapsto P(t,\epsilon)$ has at least one root in the disc
$D(0,|\epsilon|^{\mu_{P}})$ centered at 0 with radius $|\epsilon|^{\mu_{P}}$ for some small enough real
number $\mu_{P}>0$ (depending only on $m_{l},k_{l}$, $1 \leq l \leq q$ and $m_{0}$), provided that
$|\epsilon|$ is taken small enough.
\end{lemma}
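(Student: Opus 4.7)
The plan is to use a Newton-polygon-style rescaling argument. The key observation is that under the hypothesis (\ref{constraint_m0_ml}), the quantity
$$ \nu^{*} := \max_{1 \leq l \leq q} \frac{m_{0} - m_{l}}{k_{l}} $$
is strictly positive, since at least the index $l = l_{1}$ yields a positive ratio. I would use this $\nu^{*}$ as the scaling exponent and check at the end that any $\mu_{P} \in (0,\nu^{*})$ does the job.

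First, I would perform the substitution $t = \epsilon^{\nu^{*}} \tau$ and factor out $\epsilon^{m_{0}}$ to write
$$ P(\epsilon^{\nu^{*}}\tau,\epsilon) = \epsilon^{m_{0}} \tilde{P}(\tau,\epsilon), \qquad \tilde{P}(\tau,\epsilon) = a_{0} + \sum_{l=1}^{q} a_{l}\, \epsilon^{m_{l} + \nu^{*} k_{l} - m_{0}}\, \tau^{k_{l}}. $$
By the very definition of $\nu^{*}$, each exponent $m_{l} + \nu^{*} k_{l} - m_{0}$ is $\geq 0$, with equality precisely when $l$ belongs to the nonempty set $L^{*} := \{ l : (m_{0} - m_{l})/k_{l} = \nu^{*} \}$. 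Consequently, as $\epsilon \to 0$, $\tilde{P}(\tau,\epsilon)$ converges, uniformly on compact subsets of the $\tau$-plane, to
$$ \tilde{P}_{0}(\tau) = a_{0} + \sum_{l \in L^{*}} a_{l}\, \tau^{k_{l}}. $$
This limit is a nonconstant polynomial in $\tau$ (its degree is at least $\min_{l \in L^{*}} k_{l} \geq 1$) and satisfies $\tilde{P}_{0}(0) = a_{0} \neq 0$, so the fundamental theorem of algebra produces a nonzero root $\tau_{0} \in \mathbb{C}^{*}$.

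The next step is to propagate this limit root to an actual root of $P(t,\epsilon)$ for small $\epsilon$ via Rouch\'e's theorem. I would fix a radius $r > 0$ so small that $\tilde{P}_{0}$ has no zero on the circle $\{|\tau - \tau_{0}| = r\}$; then $\min_{|\tau - \tau_{0}| = r} |\tilde{P}_{0}(\tau)| > 0$. The uniform convergence $\tilde{P}(\cdot,\epsilon) \to \tilde{P}_{0}$ on this circle guarantees, for all sufficiently small $|\epsilon|$, that $|\tilde{P}(\tau,\epsilon) - \tilde{P}_{0}(\tau)| < |\tilde{P}_{0}(\tau)|$ on the circle, and hence Rouch\'e delivers at least one zero $\tau(\epsilon)$ of $\tilde{P}(\cdot,\epsilon)$ inside the disc $D(\tau_{0},r)$. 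Going back to the $t$-variable, $t(\epsilon) := \epsilon^{\nu^{*}} \tau(\epsilon)$ is then a root of $P(\cdot,\epsilon)$ with $|t(\epsilon)| \leq (|\tau_{0}| + r)\,|\epsilon|^{\nu^{*}}$.

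To conclude, I would simply pick any $\mu_{P}$ with $0 < \mu_{P} < \nu^{*}$, a quantity depending only on the integers $m_{l}, k_{l}$, $1 \leq l \leq q$, and $m_{0}$. For such $\mu_{P}$, the factor $(|\tau_{0}| + r)\,|\epsilon|^{\nu^{*} - \mu_{P}}$ is less than $1$ once $|\epsilon|$ is small enough, hence $|t(\epsilon)| \leq |\epsilon|^{\mu_{P}}$ and $t(\epsilon) \in D(0, |\epsilon|^{\mu_{P}})$, as desired. The only delicate point of the argument is really the extraction of the correct scaling exponent $\nu^{*}$ from the data $(m_{l}, k_{l})$; once the rescaling is set up, everything reduces to a routine continuity-of-roots / Rouch\'e argument, and I do not foresee any substantive obstacle.
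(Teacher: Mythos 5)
Your proof is correct, but it follows a genuinely different route from the paper's. The paper picks an index $j_{1}$ achieving $\min_{1\le l\le q} m_{l}$, divides $P$ by $\epsilon^{m_{j_1}}$, and applies Rouch\'e directly on the circle $|t|=|\epsilon|^{\mu}$, comparing with $P_{0}(t)=a_{j_1}t^{k_{j_1}}$; the admissible $\mu$ is then pinned down by the simultaneous inequalities $m_{0}-m_{j_1}-k_{j_1}\mu>0$ and $m_{l}-m_{j_1}+\mu(k_{l}-k_{j_1})>0$ for $l\neq j_{1}$, and the conclusion is that $P$ has exactly $k_{j_1}$ roots (with multiplicity) inside $D(0,|\epsilon|^{\mu})$. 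You instead extract the Newton-polygon exponent $\nu^{*}=\max_{l}(m_{0}-m_{l})/k_{l}$, rescale $t=\epsilon^{\nu^{*}}\tau$, pass to the limit polynomial $\tilde{P}_{0}(\tau)=a_{0}+\sum_{l\in L^{*}}a_{l}\tau^{k_{l}}$, and run Rouch\'e in the $\tau$-plane around a fixed nonzero root $\tau_{0}$. Your version isolates the smallest roots and produces the sharp scaling $|t(\epsilon)|\sim|\epsilon|^{\nu^{*}}$, so that any $\mu_{P}\in(0,\nu^{*})$ works — in general a larger exponent than the one the paper's inequalities allow — at the cost of exhibiting only one root rather than $k_{j_1}$ of them; conversely, the paper's comparison needs no rescaling and gives a full count of roots in the disc. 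One small point worth a sentence in your write-up: $\nu^{*}$ is rational but typically not an integer, so $\epsilon^{\nu^{*}}$ requires a branch choice; since the lemma's conclusion only involves $|\epsilon|^{\mu_{P}}$, any branch works, but the reader should be told this is immaterial.
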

\begin{proof} According to the restriction (\ref{constraint_m0_ml}) we may assume that
$$ m_{0} > \mathrm{min}_{l=1}^{q} m_{l} = \{ m_{j_1},\ldots,m_{j_{h}} \} $$
for some $1 \leq j_{s} \leq q$, $1 \leq s \leq h$, with $j_{1} < \ldots < j_{h}$.
We can rewrite
\begin{multline*}
P_{1}(t,\epsilon) = P(t,\epsilon)/\epsilon^{m_{j_1}} = \sum_{l=1}^{q} a_{l} \epsilon^{m_{l} - m_{j_1}}
t^{k_l} + a_{0}\epsilon^{m_{0} - m_{j_1}} = a_{j_1}t^{k_{j_1}} \\
+
\sum_{l \in \{ 1,\ldots,q \},l \neq j_{1}} a_{l} \epsilon^{m_{l} - m_{j_1}} t^{k_l} + a_{0} \epsilon^{m_{0} - m_{j_1}}
\end{multline*}
Let $P_{0}(t) = a_{j_1}t^{k_{j_1}}$. Recall that, by construction, $a_{j_1} \neq 0$. We plan to show that
\begin{equation}
|P_{1}(t,\epsilon) - P_{0}(t)| < |P_{0}(t)| \label{ineq_P1_P0}
\end{equation}
holds for some appropriate $\mu>0$, for all $t$ in the circle $C(0,|\epsilon|^{\mu})$ centered at 0
with radius $|\epsilon|^{\mu}$, provided that $\epsilon$ is small enough. Actually, we will observe that the
quantity $\sup_{t \in C(0,|\epsilon|^{\mu})} |P_{1}(t,\epsilon) - P_{0}(t)|/|P_{0}(t)|$ tends to 0 as $\epsilon$
tends to 0, which in particular yields the inequality (\ref{ineq_P1_P0}). Indeed, we can write
\begin{multline}
|P_{1}(t,\epsilon) - P_{0}(t)|/|P_{0}(t)| \leq
\sum_{l \in \{1, \ldots,q\},l \neq j_{1}}^{q} |\frac{a_{l}}{a_{j_1}}| |\epsilon|^{m_{l} - m_{j_1}} |t|^{k_{l} - k_{j_1}} +
|\frac{a_{0}}{a_{j_1}}||\epsilon|^{m_{0} - m_{j_1}}|t|^{-k_{j_1}}\\
= \sum_{l \in \{1, \ldots, q \},l \neq j_{1}}^{q} |\frac{a_{l}}{a_{j_1}}| |\epsilon|^{m_{l} - m_{j_1} + \mu(k_{l} - k_{j_1})} +
|\frac{a_{0}}{a_{j_1}}||\epsilon|^{m_{0} - m_{j_1} - k_{j_1}\mu} \label{maj_P1_minus_P0}
\end{multline}
for all $t \in C(0,|\epsilon|^{\mu})$.
We take $\mu > 0$ (which depends only on $m_{l},k_{l}$, $1 \leq l \leq q$ and $m_{0}$) such that
$$ m_{0} - m_{j_1} - k_{j_1}\mu > 0 \ \ , \ \ m_{l} - m_{j_1} + \mu( k_{l} - k_{j_1} ) > 0 $$
holds, for all $1 \leq l \leq q$ with $l \neq j_{1}$. Notice that such a $\mu>0$ exists since, by
construction, $m_{0} > m_{j_1}$, hence we can take $\mu < (m_{0} - m_{j_1})/k_{j_1}$. Furthermore,
for $l \notin \{ j_{1},\ldots,j_{h} \}$, we have that $m_{l} - m_{j_1}>0$. Hence, if
$k_{l} > k_{j_1}$, then $m_{l} - m_{j_1} + \mu(k_{l} - k_{j_1}) > 0$ holds for any $\mu>0$ and if
$k_{l} < k_{j_1}$, then we may take $0 < \mu < (m_{l} - m_{j_1})/(k_{j_1} - k_{l})$. Likewise,
for $l \in \{ j_{2},\ldots,j_{h} \}$ (in case $h \geq 2$), $m_{l} - m_{j_1}=0$ and $k_{l} - k_{j_1}>0$ since
$l > j_{1}$, therefore $m_{l} - m_{j_1} + \mu( k_{l} - k_{j_1} ) > 0$ holds for any $\mu>0$.

As a result, for $|\epsilon|$ small enough, the right handside of the inequality (\ref{maj_P1_minus_P0}) can be
taken strictly smaller than 1. Hence the inequality (\ref{ineq_P1_P0}) holds. Now, we can apply Rouch\'e's theorem
which states that $t \mapsto P_{1}(t,\epsilon)$ and $P_{0}(t)$ have the same number of roots
(counted with multiplicity) inside the disc $D(0,|\epsilon|^{\mu})$. In conclusion, $t \mapsto P_{1}(t,\epsilon)$
possesses $k_{j_1}$ roots inside $D(0,|\epsilon|^{\mu})$ for $|\epsilon|$ small enough.
\end{proof}
The lemma above ensures that (\ref{main_PDE_u}) possesses at least one (movable) turning point
which tends to 0 as $\epsilon$ tends to 0. Notice that this case is not covered by our previous work \cite{ma2},
where $t=0$ is assumed to be a turning point of the equation and all movable turning points
depending on $\epsilon$ remain outside a fixed disc enclosing the origin, see Remark 2 therein.\medskip

The coefficients $b_{j}(z)$ are displayed as follows. For all $0 \leq j \leq Q$,
we consider functions $m \mapsto B_{j}(m)$ that belong to the Banach space $E_{(\beta,\mu)}$ for some
$\mu > \max( \mathrm{deg}(Q_{1}) + 1, \mathrm{deg}(Q_{2}) + 1 )$ and $\beta > 0$. We set
\begin{equation}
b_{j}(z) = \mathcal{F}^{-1}(m \mapsto B_{j}(m))(z) \ \ , \ \ 0 \leq j \leq Q, \label{defin_b_j} 
\end{equation}
where $\mathcal{F}^{-1}$ denotes the Fourier inverse transform defined in Proposition 2. By construction,
$b_{j}(z)$ defines a holomorphic function on the horizontal strip
$H_{\beta} = \{ z \in \mathbb{C} / |\mathrm{Im}(z)|<\beta \}$ which is bounded on every substrip
$H_{\beta'}$ for any given $0 < \beta' < \beta$.

The function
$F^{\theta_{F}}(t,z,\epsilon)$ is a part of the forcing term given as an integral transform
\begin{equation}
F^{\theta_{F}}(t,z,\epsilon) = \frac{\epsilon^{n_{F}}}{(2\pi)^{1/2}}
\int_{-\infty}^{+\infty} \int_{L_{\theta_{F}}} \omega_{F}(u,m)( \exp( -\frac{t}{\epsilon^{\gamma}} u ) - 1 )
e^{izm} du dm \label{defin_F_tzepsilon}
\end{equation}
where $n_{F} \geq 0$ is some integer, $\gamma > 1/2$ is a real number and $\omega_{F}(\tau,m)$ is a function defined
as
\begin{equation}
\omega_{F}(\tau,m) = C_{F}(m)e^{-K_{F}\tau} \frac{F_{1}(\tau)}{F_{2}(\tau)} \label{defin_omega_F}
\end{equation}
where $C_{F}(m)$ belongs to the Banach space $E_{(\beta,\mu)}$,
$K_{F}>0$ is a real number and $F_{1}(\tau),F_{2}(\tau)$ are two polynomials with coefficients in
$\mathbb{C}$ such that $\mathrm{deg}(F_1) \leq \mathrm{deg}(F_2)$. The path of integration
$L_{\theta_{F}} = \{ ue^{\sqrt{-1}\theta_{F}} / u \in [0,+\infty) \}$ is chosen in such a way that it avoids the
roots of $F_{2}(\tau)$ and with $\theta_{F} \in (-\pi/2,\pi/2)$.

We first assert that the function $F^{\theta_{F}}(t,z,\epsilon)$ is well defined and bounded holomorphic in
time $t$ on some $\epsilon-$depending neighborhood of 0, in space $z$ on any strip $H_{\beta'}$ with
$0 < \beta' < \beta$, provided that $\epsilon$ is not vanishing in the vicinity of the origin. Namely, let us
select a real number $\delta_{1}^{0}>0$ with $\cos(\theta_{F}) > \delta_{1}^{0}/K_{F}$. We introduce the disc
$$ D_{F;\epsilon} = \{ t \in \mathbb{C} / |t| < (-\delta_{1}^{0} + K_{F}\cos(\theta_{F}))
|\epsilon|^{\gamma} \}.$$
According to the assumptions made above, one can sort a constant $C_{F_{1},F_{2}}>0$ with
\begin{equation}
\left| \frac{F_{1}(u)}{F_{2}(u)} \right| \leq C_{F_{1},F_{2}} \label{bounds_F1_over_F2}
\end{equation}
for $u \in L_{\theta_{F}}$. Then, the next estimates
\begin{multline}
|F^{\theta_{F}}(t,z,\epsilon)| \leq \frac{|\epsilon|^{n_{F}}C_{F_{1},F_{2}}}{(2\pi)^{1/2}}
\int_{-\infty}^{+\infty} \int_{0}^{+\infty} |C_{F}(m)|
\exp( -K_{F}r \cos(\theta_{F}) )\\
\times ( \exp(|\frac{t}{\epsilon^{\gamma}}|r) + 1) e^{-m \mathrm{Im}(z)} dr dm \leq
\frac{|\epsilon|^{n_{F}}C_{F_{1},F_{2}}||C_{F}(m)||_{(\beta,\mu)}}{(2\pi)^{1/2}}
\int_{-\infty}^{+\infty} (1 + |m|)^{-\mu} e^{-(\beta- \beta')|m|} dm\\
\times (\int_{0}^{+\infty} e^{-\delta_{1}^{0}r} + e^{-K_{F}r \cos(\theta_{F})} dr)
\end{multline}
hold for all $t \in D_{F;\epsilon}$, $\epsilon \in D(0,\epsilon_{0}) \setminus \{ 0 \}$, for some
$\epsilon_{0} > 0$ and $z \in H_{\beta'}$,
for any $0 < \beta' < \beta$. As a consequence, $(t,z) \mapsto F^{\theta_{F}}(t,z,\epsilon)$ represents a
holomorphic bounded function on $D_{F;\epsilon} \times H_{\beta'}$ for any $0 < \beta' < \beta$ and
$\epsilon \in D(0,\epsilon_{0}) \setminus \{ 0 \}$.

In a second place, we check that $F^{\theta_{F}}(t,z,\epsilon)$ represents a holomorphic function w.r.t $t$
on some $\epsilon-$depending unbounded sectorial domain away from the origin, in space $z$ on strips
$H_{\beta'}$ with $0 < \beta' < \beta$, for any given $\epsilon$ belonging to some suitable bounded sector
centered at 0. More specifically, we consider an unbounded open sector $U_{\theta_F}$ centered at 0 with
an aperture chosen in a manner that it bypasses all the roots of the polynomial $F_{2}(\tau)$.
Let $\delta_{2}^{\infty}>0$ be a positive real number. We sort a bounded
sector $\mathcal{E}^{\infty}$ centered at 0 with opening contained in the range $(\pi/\gamma,2\pi)$,
a positive real number $\delta_{1}^{\infty}$ and suitable directions
$\alpha_{\infty} < \beta_{\infty}$ taken in a way that
there exists some direction $\theta_{F}^{\Delta}$ (that may depend on $\epsilon$ and $t$) satisfying
$e^{i \theta_{F}^{\Delta}} \in U_{\theta_F}$ and fulfills the next demand
\begin{equation}
\theta_{F}^{\Delta} + \mathrm{arg}(\frac{t}{\epsilon^{\gamma}}) \in (-\frac{\pi}{2},\frac{\pi}{2}) \ \ , \ \
\cos( \theta_{F}^{\Delta} + \mathrm{arg}(\frac{t}{\epsilon^{\gamma}}) ) > \delta_{1}^{\infty}
\end{equation}
for all $\epsilon \in \mathcal{E}^{\infty}$ and $t$ belonging to the unbounded sector
$$ \mathcal{T}_{F;\epsilon}^{\infty} = \{ t \in \mathbb{C}^{\ast} /
|t| > \frac{K_{F} + \delta_{2}^{\infty}}{\delta_{1}^{\infty}}|\epsilon|^{\gamma} \ \ , \ \
\alpha_{\infty} < \mathrm{arg}(t) < \beta_{\infty} \}. $$
Through a path deformation argument, we may observe that $F^{\theta_F}(t,z,\epsilon)$ can be rewritten as the sum
\begin{multline*}
F^{\theta_F}(t,z,\epsilon) =
\frac{\epsilon^{n_F}}{(2\pi)^{1/2}} \left( \int_{-\infty}^{+\infty} \int_{L_{\theta_{F}^{\Delta}}}
\omega_{F}(u,m) \exp( -\frac{t}{\epsilon^{\gamma}} u )e^{izm} du dm \right. \\
\left. - \int_{-\infty}^{+\infty} \int_{L_{\theta_F}} \omega_{F}(u,m) e^{izm} du dm \right)
\end{multline*}
for all $t \in \mathcal{T}_{F;\epsilon}^{\infty}$, $z \in H_{\beta'}$ with $0 < \beta' < \beta$ and
$\epsilon \in \mathcal{E}^{\infty}$. As above, we can select a constant $C_{F_{1},F_{2}}>0$ for which
(\ref{bounds_F1_over_F2}) holds. Then, from the latter decomposition, we deduce that
\begin{multline}
|F^{\theta_F}(t,z,\epsilon)| \leq \frac{|\epsilon|^{n_F}}{(2\pi)^{1/2}}C_{F_{1},F_{2}}
\left( \int_{-\infty}^{+\infty} \int_{0}^{+\infty}
|C_{F}(m)| \right. \\
\times \exp( K_{F}r - \frac{|t|}{|\epsilon|^{\gamma}}r
\cos( \theta_{F}^{\Delta} + \mathrm{arg}(\frac{t}{\epsilon^{\gamma}}) ) ) e^{-m \mathrm{Im}(z)} dr dm
\\
\left. + \int_{-\infty}^{+\infty} \int_{0}^{+\infty} |C_{F}(m)| \exp( -K_{F}r \cos(\theta_{F}) )
e^{-m \mathrm{Im}(z)} dr dm \right)\\
\leq \frac{|\epsilon|^{n_F}C_{F_{1},F_{2}}||C_{F}(m)||_{(\beta,\mu)}}{(2\pi)^{1/2}}
\int_{-\infty}^{+\infty} (1 + |m|)^{-\mu} e^{-(\beta - \beta')|m|} dm\\
\times ( \int_{0}^{+\infty} e^{-\delta_{2}^{\infty}r} dr + \int_{0}^{+\infty}
e^{-K_{F}r \cos(\theta_{F}) } dr )
\end{multline}
holds for all $t \in \mathcal{T}_{F;\epsilon}^{\infty}$, $z \in H_{\beta'}$ with $0 < \beta' < \beta$ and
$\epsilon \in \mathcal{E}^{\infty}$. In particular, we see that $(t,z) \mapsto F^{\theta_{F}}(t,z,\epsilon)$
represents a holomorphic bounded function on
$\mathcal{T}_{F;\epsilon}^{\infty} \times H_{\beta'}$ for any $0 < \beta' < \beta$, when $\epsilon$ belongs to
$\mathcal{E}^{\infty}$.\medskip

\noindent {\bf Remark 1.} Let us set
$$ c_{F}(z) = \frac{1}{(2\pi)^{1/2}}\int_{-\infty}^{+\infty} C_{F}(m) e^{izm} dm \ \ , \ \
c_{F_{1},F_{2},\theta_{F}} = \int_{L_{\theta_F}} e^{-K_{F}u}\frac{F_{1}(u)}{F_{2}(u)} du $$
and expand $F_{1}(u) = \sum_{k=0}^{\mathrm{deg}(F_1)} F_{1,k}u^{k}$. According to the identities
concerning the classical Laplace transform coming next
in Lemma 4, we can claim that $F^{\theta_F}(t,z,\epsilon)$ solves the next singularly perturbed inhomogeneous
linear ODE
\begin{equation}
F_{2}(-\epsilon^{\gamma}\partial_{t})F^{\theta_F}(t,z,\epsilon) =
\epsilon^{n_F}c_{F}(z) \left( \sum_{k=0}^{\mathrm{deg}(F_1)} F_{1,k}
\frac{k!}{(K_{F} + \frac{t}{\epsilon^{\gamma}})^{k+1}} - F_{2}(0)c_{F_{1},F_{2},\theta_{F}} \right).
\label{ODE_F_theta_F}
\end{equation}
Our choice for the piece of forcing term $F^{\theta_F}(t,z,\epsilon)$ can be considered as very specific. However,
for the sake of simplicity and clarity, we have taken it as an explicit solution of a inhomogeneous basic
singularly perturbed ODE which is irregular at $t=\infty$ and regular at $t=0$. But all the forthcoming results
disclosed in this paper may also work for forcing terms being solutions of more general singularly perturbed ODEs
sharing the same behaviour at $t=0$ and $t=\infty$ as our model.

\section{Construction of inner solutions to the main problem}

Within this section, we build up solutions of the main equation (\ref{main_PDE_u}) for time $t$ located on
small $\epsilon-$depending sectorial domains in the vicinity of the origin, whose radius is proportional to
some positive power of $|\epsilon|$.

\subsection{Borel-Laplace transforms of order $k$}

In this section, we review some basic statements concerning a $k-$Borel summability method of formal power
series which is a slightly modified version of the more classical procedure (see~\cite{ba}, Section 3.2).
This novel version has already been used in our most recent works such as \cite{lama1}, \cite{ma2}.

\begin{defin}
Let $k\ge1$ be an integer. Let $(m_{k}(n))_{n\ge1}$ be the sequence
$$m_{k}(n)=\Gamma\left(\frac{n}{k}\right)=\int_{0}^{\infty}t^{\frac{n}{k}-1}e^{-t}dt,\qquad n\ge1.$$
Let $(\mathbb{E},\left\|\cdot\right\|_{\mathbb{E}})$ be a complex Banach space. We say a formal power series
$$\hat{X}(T)=\sum_{n=1}^{\infty}a_{n}T^{n}\in T\mathbb{E}[[T]]$$ is $m_{k}-$summable with respect to $T$
in the direction $d \in \mathbb{R}$ if the following assertions hold:
\begin{enumerate}
\item There exists $\rho>0$ such that the $m_{k}-$Borel transform of $\hat{X}$, $\mathcal{B}_{m_{k}}(\hat{X})$, is absolutely convergent for $|\tau|<\rho$, where
$$\mathcal{B}_{m_{k}}(\hat{X})(\tau)=\sum_{n=1}^{\infty}\frac{a_{n}}{\Gamma\left(\frac{n}{k}\right)}\tau^{n}\in\tau\mathbb{E}[[\tau]].$$
\item The series $\mathcal{B}_{m_{k}}(\hat{X})$ can be analytically continued in a sector
$S=\{\tau \in \mathbb{C}^{\star}:|d-\arg(\tau)|<\delta\}$ for some $\delta>0$. In addition to this,
the extension is of exponential growth at most $k$ in $S$, meaning that there exist $C,K>0$ such that
$$\left\|\mathcal{B}_{m_{k}}(\hat{X})(\tau)\right\|_{\mathbb{E}}\le Ce^{K|\tau|^{k}},\quad \tau \in S.$$
\end{enumerate}
Under these assumptions, the vector valued Laplace transform of $\mathcal{B}_{m_{k}}(\hat{X})$ along
direction $d$ is defined by
$$\mathcal{L}_{m_{k}}^{d}\left(\mathcal{B}_{m_{k}}(\hat{X})\right)(T)=
k\int_{L_{\gamma}}\mathcal{B}_{m_{k}}(\hat{X})(u)e^{-(u/T)^k}\frac{du}{u},$$
where $L_{\gamma}$ is the path parametrized by $u\in[0,\infty)\mapsto ue^{i\gamma}$, for some
appropriate direction $\gamma$ depending on $T$, such that $L_{\gamma}\subseteq S$ and
$\cos(k(\gamma-\arg(T)))\ge\Delta>0$ for some $\Delta>0$.

The function $\mathcal{L}_{m_{k}}^{d}(\mathcal{B}_{m_{k}}(\hat{X}))$ is well defined and turns out to be a
holomorphic and bounded function in any sector of the form
$S_{d,\theta,R^{1/k}}=\{T\in\mathbb{C}^{\star}:|T|<R^{1/k},|d-\arg(T)|<\theta/2\}$, for
some $\frac{\pi}{k}<\theta<\frac{\pi}{k}+2\delta$ and $0<R<\Delta/K$. This function is known
as the $m_{k}-$sum of the formal power series $\hat{X}(T)$ in the direction $d$.
\end{defin}

The following are some elementary properties concerning the $m_{k}-$sums of formal power series which will be
crucial in our procedure.

1) The function $\mathcal{L}_{m_{k}}^{d}(\mathcal{B}_{m_{k}}(\hat{X}))(T)$ admits $\hat{X}(T)$ as its
Gevrey asymptotic expansion of order $1/k$ with respect to $T$ in $S_{d,\theta,R^{1/k}}$. More precisely,
for every $\frac{\pi}{k}<\theta_1<\theta$, there exist $C,M>0$ such that
$$\left\|\mathcal{L}^{d}_{m_{k}}(\mathcal{B}_{m_{k}}(\hat{X}))(T)-
\sum_{p=1}^{n-1}a_{p}T^{p}\right\|_{\mathbb{E}}\le CM^{n}\Gamma(1+\frac{n}{k})|T|^{n},$$
for every $n\ge2$ and $T\in S_{d,\theta_{1},R^{1/k}}$. Watson's lemma (see Proposition 11 p.75 in \cite{ba2})
allows us to affirm that $\mathcal{L}^{d}_{m_{k}}(\mathcal{B}_{m_{k}}(\hat{X}))(T)$ is unique provided that
the opening $\theta_1$ is larger than $\frac{\pi}{k}$.

2) Whenever $\mathbb{E}$ is a Banach algebra, the set of holomorphic functions having Gevrey
asymptotic expansion of order $1/k$ on a sector with values in $\mathbb{E}$ turns out
to be a differential algebra (see Theorem 18, 19 and 20 in \cite{ba2}). This, and the
uniqueness provided by Watson's lemma allow us to obtain some properties on $m_{k}-$summable
formal power series in direction $d$.

By $\star$ we denote the product in the Banach algebra and also the Cauchy product of formal power
series with coefficients in $\mathbb{E}$. Let $\hat{X}_{1}$, $\hat{X}_{2}\in T\mathbb{E}[[T]]$ be
$m_{k}-$summable formal power series in direction $d$. Let $q_1\ge q_2\ge1$ be integers.
Then $ \hat{X}_{1}+\hat{X}_{2}$, $\hat{X}_{1}\star \hat{X}_{2}$ and $T^{q_1}\partial_{T}^{q_2}\hat{X}_{1}$,
which are elements of $T\mathbb{E}[[T]]$, are $m_{k}-$summable in direction $d$. Moreover, one has
$$\mathcal{L}_{m_{k}}^{d}(\mathcal{B}_{m_{k}}(\hat{X}_{1}))(T)+
\mathcal{L}_{m_{k}}^{d}(\mathcal{B}_{m_{k}}(\hat{X}_{2}))(T)=
\mathcal{L}_{m_{k}}^{d}(\mathcal{B}_{m_{k}}(\hat{X}_{1}+\hat{X}_{2}))(T),$$
$$\mathcal{L}_{m_{k}}^{d}(\mathcal{B}_{m_{k}}(\hat{X}_{1}))(T)\star
\mathcal{L}_{m_{k}}^{d}(\mathcal{B}_{m_{k}}(\hat{X}_{2}))(T)=
\mathcal{L}_{m_{k}}^{d}(\mathcal{B}_{m_{k}}(\hat{X}_{1}\star\hat{X}_{2}))(T),$$
$$T^{q_1}\partial_{T}^{q_2}\mathcal{L}^{d}_{m_{k}}(\mathcal{B}_{m_{k}}(\hat{X}_{1}))(T)
=\mathcal{L}_{m_{k}}^{d}(\mathcal{B}_{m_{k}}(T^{q_1}\partial_{T}^{q_2}\hat{X}_{1}))(T),$$
for every $T\in S_{d,\theta,R^{1/k}}$.

The next proposition is written without proof for it can be found in \cite{lama1}, Proposition 6.

\begin{prop}
Let $\hat{f}(t)=\sum_{n \geq 1}f_nt^n$ and $\hat{g}(t) = \sum_{n \geq 1} g_{n} t^{n}$ that belong to
$\mathbb{E}[[t]]$, where $(\mathbb{E},\left\|\cdot\right\|_{\mathbb{E}})$ is a Banach algebra. Let
$k,m\ge1$ be integers. The following formal identities hold.
$$\mathcal{B}_{m_{k}}(t^{k+1}\partial_{t}\hat{f}(t))(\tau)=k\tau^{k}\mathcal{B}_{m_{k}}(\hat{f}(t))(\tau),$$
$$\mathcal{B}_{m_{k}}(t^{m}\hat{f}(t))(\tau)=
\frac{\tau^{k}}{\Gamma\left(\frac{m}{k}\right)}\int_{0}^{\tau^{k}}
(\tau^{k}-s)^{\frac{m}{k}-1}\mathcal{B}_{m_{k}}(\hat{f}(t))(s^{1/k})\frac{ds}{s}$$
and
$$
\mathcal{B}_{m_k}( \hat{f}(t) \star \hat{g}(t) )(\tau) = \tau^{k}\int_{0}^{\tau^{k}}
\mathcal{B}_{m_k}(\hat{f}(t))((\tau^{k}-s)^{1/k}) \star \mathcal{B}_{m_k}(\hat{g}(t))(s^{1/k})
\frac{1}{(\tau^{k}-s)s} ds.
$$
\end{prop}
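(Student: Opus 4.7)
The plan is to verify each of the three identities at the level of formal power series, i.e.\ coefficient by coefficient on both sides. Write $\hat{f}(t)=\sum_{n\ge 1}f_n t^n$ and $\hat{g}(t)=\sum_{n\ge 1}g_n t^n$, so that by the very definition of the modified Borel transform, $\mathcal{B}_{m_k}(\hat{f})(\tau)=\sum_{n\ge 1}\frac{f_n}{\Gamma(n/k)}\tau^n$. Because all three relations are formal, convergence is not a concern: it suffices to match the coefficient of each monomial $\tau^N$ on both sides.

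For the first identity, I would expand $t^{k+1}\partial_t \hat{f}(t)=\sum_{n\ge 1} n f_n\, t^{n+k}$, apply $\mathcal{B}_{m_k}$ term-by-term to get $\sum_{n\ge 1}\frac{n f_n}{\Gamma(n/k+1)}\tau^{n+k}$, and then invoke the functional equation $\Gamma(n/k+1)=(n/k)\Gamma(n/k)$ to rewrite the coefficient as $\frac{k f_n}{\Gamma(n/k)}$. Factoring out $k\tau^k$ yields exactly $k\tau^k \mathcal{B}_{m_k}(\hat{f})(\tau)$. This step is purely arithmetic and should present no real difficulty.

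The second identity reduces to the classical Eulerian Beta integral. Namely, inserting $\mathcal{B}_{m_k}(\hat{f})(s^{1/k})=\sum_{n\ge 1}\frac{f_n}{\Gamma(n/k)}s^{n/k}$ into the right-hand side and exchanging sum and integral (legitimate at the formal level), I must compute $\int_{0}^{\tau^k}(\tau^k-s)^{m/k-1}s^{n/k-1}\,ds$. The substitution $s=u\tau^k$ turns this into $\tau^{m+n-k}B(m/k,n/k)=\tau^{m+n-k}\frac{\Gamma(m/k)\Gamma(n/k)}{\Gamma((m+n)/k)}$. Multiplying by the prefactor $\tau^k/\Gamma(m/k)$ the two Gamma factors cancel and one is left with $\sum_{n\ge 1}\frac{f_n}{\Gamma((n+m)/k)}\tau^{n+m}$, which is precisely $\mathcal{B}_{m_k}(t^m\hat{f})(\tau)$.

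The third identity follows along the same lines, but with a double series and the Banach-algebra product $\star$. I would expand $\mathcal{B}_{m_k}(\hat{f})((\tau^k-s)^{1/k})\star \mathcal{B}_{m_k}(\hat{g})(s^{1/k})$ as $\sum_{p,q\ge 1}\frac{f_p\star g_q}{\Gamma(p/k)\Gamma(q/k)}(\tau^k-s)^{p/k}s^{q/k}$ and compute the integral with respect to $s$ using once more the Beta integral, obtaining $\tau^{p+q-k}\frac{\Gamma(p/k)\Gamma(q/k)}{\Gamma((p+q)/k)}$. After multiplication by $\tau^k$ the Gamma factors again cancel, giving $\sum_{p,q\ge 1}\frac{f_p\star g_q}{\Gamma((p+q)/k)}\tau^{p+q}$; collecting terms by $n=p+q$ recovers the Borel transform of the Cauchy product $\hat{f}\star\hat{g}$. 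The only mildly delicate point, and where I would be most careful, is the bookkeeping of exponents in the substitution $s=u\tau^k$ and the justification that $\star$ commutes with the scalar integration against $(\tau^k-s)^{p/k-1}s^{q/k-1}ds$; the latter is immediate because integration of an $\mathbb{E}$-valued analytic function against a scalar kernel is $\mathbb{E}$-linear, and the Banach algebra product is bilinear and continuous.
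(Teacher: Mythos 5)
Your proposal is correct. The paper itself does not prove Proposition~3 here (it defers to Proposition~6 of the cited companion paper \cite{lama1}), but the argument you sketch is precisely the standard one: expand both sides coefficient-by-coefficient, use the functional equation $\Gamma(n/k+1)=(n/k)\Gamma(n/k)$ for the first identity, and reduce the integral in the second and third identities to the Euler Beta integral $\int_0^{\tau^k}(\tau^k-s)^{a-1}s^{b-1}\,ds=\tau^{k(a+b-1)}\,\Gamma(a)\Gamma(b)/\Gamma(a+b)$ via the scaling $s=u\tau^k$, after which the Gamma factors cancel. Your bookkeeping is right: in the second identity the extra $ds/s$ turns $s^{n/k}$ into $s^{n/k-1}$, and in the third the kernel $1/((\tau^k-s)s)$ shifts both exponents $p/k$ and $q/k$ down by one, so the Beta computation matches exactly and regrouping by $N=p+q$ recovers the Cauchy product. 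The remarks you make about $\mathbb{E}$-linearity of integration against a scalar kernel and bilinearity/continuity of the Banach-algebra product $\star$ are exactly the right justification for interchanging $\star$ with the integral. One cosmetic point: since $\hat f,\hat g\in T\mathbb{E}[[T]]$, the Cauchy product starts at $N=2$, which is consistent with the stated formulas since both sides vanish to order $\tau^2$.
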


\subsection{Banach spaces with exponential growth and exponential decay}

In this section, we recall the definition and display useful properties of Banach spaces as defined in our
previous work, \cite{ma2}. We denote $D(0,\rho)$ the open disc centered at $0$ with radius $\rho>0$ in
$\mathbb{C}$ and by $\bar{D}(0,\rho)$ its closure. Let 
$S_{d}$ be an open unbounded sector in direction $d \in \mathbb{R}$ and $\mathcal{E}$ be an
open sector with finite radius $r_{\mathcal{E}}$, both centered at $0$ in $\mathbb{C}$. By
convention, these sectors do not contain the origin in $\mathbb{C}$.\medskip

\begin{defin} Let $\nu,\rho>0$ and $\beta>0,\mu>1$ be real numbers. Let $\kappa \geq 1$ be an integer and
$\chi > 0$ be some real number. Let $\epsilon \in \mathcal{E}$. We denote $F_{(\nu,\beta,\mu,\chi,\kappa,\epsilon)}^{d}$
the vector space of continuous functions $(\tau,m) \mapsto h(\tau,m)$ on
$(\bar{D}(0,\rho) \cup S_{d}) \times \mathbb{R}$, which are holomorphic w.r.t $\tau$ on $D(0,\rho) \cup S_{d}$
and such that
\begin{multline*}
||h(\tau,m)||_{(\nu,\beta,\mu,\chi,\kappa,\epsilon)} \\=
\sup_{\tau \in \bar{D}(0,\rho) \cup S_{d},m \in \mathbb{R}} (1+|m|)^{\mu}\exp(\beta|m|)
\frac{1 + |\frac{\tau}{\epsilon^{\chi}}|^{2\kappa}}{|\frac{\tau}{\epsilon^{\chi}}|}
\exp( - \nu|\frac{\tau}{\epsilon^{\chi}}|^{\kappa} ) |h(\tau,m)|
\end{multline*}
is finite. One can check that the normed space
$(F_{(\nu,\beta,\mu,\chi,\kappa,\epsilon)}^{d},||.||_{(\nu,\beta,\mu,\chi,\kappa,\epsilon)})$ is a
Banach space.
\end{defin}

\noindent Throughout the whole section, we keep the notations of Definitions in this section.\medskip

\noindent The next lemma and proposition are kept almost unchanged as stated in Section 2 of \cite{ma2} and
we decide to omit their proofs for avoiding overlapping with our previous work.

\begin{lemma} Let $\gamma_{1} \geq 0$, $\gamma_{2} \geq 1$ be integers and $\gamma_{3} \in \mathbb{R}$.
Let $R(X)$ be a polynomial that belongs to $\mathbb{C}[X]$ such that
$R(im) \neq 0$ for all $m \in \mathbb{R}$. We take a function $B(m)$ located in $E_{(\beta,\mu)}$
and we consider a continuous function
$a_{\gamma_{1},\kappa}(\tau,m)$ on $(\bar{D}(0,\rho) \cup S_{d}) \times \mathbb{R}$,
holomorphic w.r.t $\tau$ on $D(0,\rho) \cup S_{d}$ such that
$$|a_{\gamma_{1},\kappa}(\tau,m)| \leq \frac{1}{(1+|\tau|^{\kappa})^{\gamma_1}|R(im)|}$$
for all $\tau \in \bar{D}(0,\rho) \cup S_{d}$, all $m \in \mathbb{R}$.

Then, the function $\epsilon^{-\gamma_{3}} \tau^{\gamma_2}B(m)a_{\gamma_{1},\kappa}(\tau,m)$ belongs
to $F_{(\nu,\beta,\mu,\chi,\kappa,\epsilon)}^{d}$. Moreover, there exists a constant
$C_{1}>0$ (depending on $\nu$,$\kappa$ and $\gamma_{2}$) such that
\begin{equation}
||\epsilon^{-\gamma_{3}} \tau^{\gamma_2} B(m)
a_{\gamma_{1},\kappa}(\tau,m) ||_{(\nu,\beta,\mu,\chi,\kappa,\epsilon)} \leq
C_{1}\frac{||B(m)||_{(\beta,\mu)}}{\mathrm{inf}_{m \in \mathbb{R}}|R(im)|}
|\epsilon|^{\chi \gamma_{2} - \gamma_{3}}
\end{equation}
for all $\epsilon \in \mathcal{E}$.
\end{lemma}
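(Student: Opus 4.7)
The plan is to bound $||\epsilon^{-\gamma_{3}}\tau^{\gamma_{2}}B(m)a_{\gamma_{1},\kappa}(\tau,m)||_{(\nu,\beta,\mu,\chi,\kappa,\epsilon)}$ directly from Definition 4 by factoring the defining supremum into an $m$-piece and a $\tau$-piece. After inserting the pointwise majorant on $|a_{\gamma_{1},\kappa}|$, the $1/|R(im)|$ factor regroups with the weight $(1+|m|)^{\mu}e^{\beta|m|}|B(m)|$ and is bounded by $||B(m)||_{(\beta,\mu)}/\inf_{m\in\mathbb{R}}|R(im)|$, which is finite since $R(im)$ does not vanish on $\mathbb{R}$ and $B\in E_{(\beta,\mu)}$.

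The remaining $\tau$-dependent quantity takes the shape
$$|\epsilon|^{-\gamma_{3}}\,\frac{(1+|\tau/\epsilon^{\chi}|^{2\kappa})\,|\tau|^{\gamma_{2}}}{|\tau/\epsilon^{\chi}|\,(1+|\tau|^{\kappa})^{\gamma_{1}}}\,\exp\bigl(-\nu|\tau/\epsilon^{\chi}|^{\kappa}\bigr).$$
The decisive step is the scaling $x=|\tau/\epsilon^{\chi}|\geq 0$, which rewrites $|\tau|^{\gamma_{2}}=x^{\gamma_{2}}|\epsilon|^{\chi\gamma_{2}}$ and thereby extracts the expected prefactor $|\epsilon|^{\chi\gamma_{2}-\gamma_{3}}$. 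Since $\gamma_{1}\geq 0$ I may discard the harmless lower bound $(1+|\tau|^{\kappa})^{\gamma_{1}}\geq 1$ in the denominator. The task is then reduced to controlling the one-variable supremum
$$\sup_{x\geq 0}(1+x^{2\kappa})\,x^{\gamma_{2}-1}\,e^{-\nu x^{\kappa}},$$
which is finite: because $\gamma_{2}\geq 1$ the factor $x^{\gamma_{2}-1}$ is continuous at $0$, and at infinity the exponential decay $e^{-\nu x^{\kappa}}$ dominates any polynomial growth. This supremum is the desired constant $C_{1}$, and by construction it depends only on $\nu$, $\kappa$ and $\gamma_{2}$.

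No substantial obstacle is anticipated: the whole argument amounts to a scaling together with an elementary one-variable calculus estimate, and every bound is uniform in $\tau\in\bar{D}(0,\rho)\cup S_{d}$ since the quantities depend on $\tau$ only through $|\tau|$. The minor cautions to observe are that the one-sided bound on $(1+|\tau|^{\kappa})^{\gamma_{1}}$ suffices (we do not need the full strength of the hypothesis on $a_{\gamma_{1},\kappa}$ here, since $\gamma_{1}$ only enters as a non-negativity exponent), and that the continuity and holomorphy requirements placed on $a_{\gamma_{1},\kappa}$ are inherited trivially by the product $\epsilon^{-\gamma_{3}}\tau^{\gamma_{2}}B(m)a_{\gamma_{1},\kappa}(\tau,m)$, so membership in $F_{(\nu,\beta,\mu,\chi,\kappa,\epsilon)}^{d}$ follows at once from the finiteness of the norm.
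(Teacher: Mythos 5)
The paper itself does not include a proof of this lemma: immediately before it, the authors write that this lemma (and the following proposition) are ``kept almost unchanged as stated in Section 2 of \cite{ma2} and we decide to omit their proofs for avoiding overlapping with our previous work.'' So there is no in-text proof to compare against.

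Your proof is correct and is the natural one. The split of the supremum into an $m$-factor, bounded by $||B||_{(\beta,\mu)}/\inf_{m}|R(im)|$, and a $\tau$-factor is exactly the right decomposition. Your scaling $x=|\tau/\epsilon^{\chi}|$, giving $|\tau|^{\gamma_2}=x^{\gamma_2}|\epsilon|^{\chi\gamma_2}$ and hence the prefactor $|\epsilon|^{\chi\gamma_2-\gamma_3}$, is the decisive computation; and the reduction, after discarding the $(1+|\tau|^{\kappa})^{\gamma_1}\geq 1$ factor, to the finite one-variable supremum $\sup_{x\geq 0}(1+x^{2\kappa})\,x^{\gamma_2-1}e^{-\nu x^{\kappa}}$ is a clean way to produce a constant depending only on $\nu,\kappa,\gamma_2$. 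You correctly flag that $\gamma_2\geq 1$ is needed near $x=0$ and that $\gamma_1\geq 0$ is only used via the trivial bound $(1+|\tau|^{\kappa})^{\gamma_1}\geq 1$. Membership in $F^{d}_{(\nu,\beta,\mu,\chi,\kappa,\epsilon)}$ indeed follows from finiteness of the norm together with the inherited continuity in $(\tau,m)$ and holomorphy in $\tau$. No gap.
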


\begin{prop} Let $\gamma_{j}$, $0 \leq j \leq 3$, be real numbers with $\gamma_{1} \geq 0$.
Let $R(X),R_{D}(X)$ be polynomials
with complex coefficients such that $\mathrm{deg}(R) \leq \mathrm{deg}(R_{D})$ and with
$R_{D}(im) \neq 0$
for all $m \in \mathbb{R}$. We consider a continuous function
$a_{\gamma_{1},\kappa}(\tau,m)$ on $(\bar{D}(0,\rho) \cup S_{d}) \times \mathbb{R}$,
holomorphic w.r.t $\tau$ on $D(0,\rho) \cup S_{d}$ such that
$$|a_{\gamma_{1},\kappa}(\tau,m)| \leq \frac{1}{(1+|\tau|^{\kappa})^{\gamma_1}|R_{D}(im)|}$$
for all $\tau \in \bar{D}(0,\rho) \cup S_{d}$, all $m \in \mathbb{R}$. We make the next assumptions
\begin{equation}
\frac{1}{\kappa} + \gamma_{3} + 1 > 0 \ \ , \ \ \gamma_{2}+\gamma_{3}+2 \geq 0 \ \ , \ \
\gamma_{2} > -1. \label{constraints_gamma_j_kappa_conv_op}
\end{equation}
1) If $1 + \gamma_{3} \leq 0$, then there exists a constant $C_{2}>0$ (depending on
$\nu,\kappa,\gamma_{2},\gamma_{3}$ and $R(X),R_{D}(X)$) such that
\begin{multline}
||\epsilon^{-\gamma_{0}} a_{\gamma_{1},\kappa}(\tau,m) R(im) \tau^{\kappa}
\int_{0}^{\tau^{\kappa}} (\tau^{\kappa} - s)^{\gamma_2} s^{\gamma_3}
f(s^{1/\kappa},m) ds||_{(\nu,\beta,\mu,\chi,\kappa,\epsilon)}\\
\leq C_{2}|\epsilon|^{\chi \kappa(\gamma_{2}+\gamma_{3}+2)-\gamma_{0}}
||f(\tau,m)||_{(\nu,\beta,\mu,\chi,\kappa,\epsilon)} \label{norm_estim_conv_op_1}
\end{multline}
for all $f(\tau,m) \in F_{(\nu,\beta,\mu,\chi,\kappa,\epsilon)}^{d}$.\\
2) If $1 + \gamma_{3} > 0$ and $\gamma_{1} \geq 1 + \gamma_{3}$, then there exists a constant
$C_{2}'>0$ (depending on $\nu,\kappa,\gamma_{1},\gamma_{2},\gamma_{3}$ and $R(X),R_{D}(X)$) such that
\begin{multline}
||\epsilon^{-\gamma_{0}} a_{\gamma_{1},\kappa}(\tau,m) R(im) \tau^{\kappa}
\int_{0}^{\tau^{\kappa}} (\tau^{\kappa} - s)^{\gamma_2} s^{\gamma_3}
f(s^{1/\kappa},m) ds||_{(\nu,\beta,\mu,\chi,\kappa,\epsilon)}\\
\leq C_{2}'|\epsilon|^{\chi \kappa(\gamma_{2}+\gamma_{3}+2)
-\gamma_{0}- \chi \kappa \gamma_{1}}
||f(\tau,m)||_{(\nu,\beta,\mu,\chi,\kappa,\epsilon)} \label{norm_estim_conv_op_2}
\end{multline}
for all $f(\tau,m) \in F_{(\nu,\beta,\mu,\chi,\kappa,\epsilon)}^{d}$.\\
\end{prop}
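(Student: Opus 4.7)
The plan is to reduce the norm estimate to a pointwise bound in $(\tau,m)$, extract the $\epsilon$-scaling through a dimensionless substitution, and then establish uniform boundedness of a scalar auxiliary function. For $f\in F^{d}_{(\nu,\beta,\mu,\chi,\kappa,\epsilon)}$, the norm definition yields
\begin{equation*}
|f(s^{1/\kappa},m)| \leq \|f\|_{(\nu,\beta,\mu,\chi,\kappa,\epsilon)}\,(1+|m|)^{-\mu}\,e^{-\beta|m|}\,
\frac{|s|^{1/\kappa}/|\epsilon|^{\chi}}{1+|s|^{2}/|\epsilon|^{2\chi\kappa}}\,\exp\!\bigl(\nu |s|/|\epsilon|^{\chi\kappa}\bigr).
\end{equation*}
Parametrizing the path from $0$ to $\tau^{\kappa}$ by $s = h e^{i\kappa\arg(\tau)}$ with $h\in[0,|\tau|^{\kappa}]$, and plugging in the hypothesis $|a_{\gamma_{1},\kappa}(\tau,m)| \leq (1+|\tau|^{\kappa})^{-\gamma_{1}}/|R_{D}(im)|$, the quantity to estimate factors as $\|f\|\cdot(1+|m|)^{-\mu}e^{-\beta|m|}\cdot|R(im)|/|R_{D}(im)|$ times a purely $\tau$-dependent scalar integral. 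The ratio $|R(im)|/|R_{D}(im)|$ is bounded on $\mathbb{R}$ since $\deg R\leq \deg R_{D}$ and $R_{D}(im)\neq 0$; this takes care of the $m$-variable completely.

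Next I would perform the substitution $h = |\epsilon|^{\chi\kappa}Y$ and set $X = |\tau|^{\kappa}/|\epsilon|^{\chi\kappa}$. All powers of $|\epsilon|$ collect into $|\epsilon|^{\chi\kappa(\gamma_{2}+\gamma_{3}+1)}$, and the scalar integral reduces to
\begin{equation*}
J(X) := \int_{0}^{X}(X-Y)^{\gamma_{2}}\,Y^{\gamma_{3}+1/\kappa}\,\frac{e^{\nu Y}}{1+Y^{2}}\,dY.
\end{equation*}
The hypotheses $\gamma_{2}>-1$ and $\gamma_{3}+1/\kappa>-1$ from (\ref{constraints_gamma_j_kappa_conv_op}) guarantee convergence of $J(X)$ at both endpoints. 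Multiplying by the inverse weight $(1+X^{2})X^{-1/\kappa}e^{-\nu X}$ coming from the norm, and by the factor $|\tau|^{\kappa}/(1+|\tau|^{\kappa})^{\gamma_{1}} = X|\epsilon|^{\chi\kappa}(1+X|\epsilon|^{\chi\kappa})^{-\gamma_{1}}$, the estimate reduces to showing that
\begin{equation*}
X \longmapsto \frac{(1+X^{2})\,e^{-\nu X}}{X^{1/\kappa}}\,J(X)\cdot\frac{X|\epsilon|^{\chi\kappa}}{(1+X|\epsilon|^{\chi\kappa})^{\gamma_{1}}}
\end{equation*}
is uniformly bounded in $X\geq 0$ and $\epsilon\in\mathcal{E}$, by a constant times $|\epsilon|^{\chi\kappa}$ in case~1 and by $|\epsilon|^{\chi\kappa(1-\gamma_{1})}$ in case~2.

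The main obstacle is controlling the asymptotics of $J(X)$ as $X\to\infty$, where the factor $e^{\nu Y}$ threatens to overwhelm the weight $e^{-\nu X}$. I would split the integral $\int_{0}^{X}=\int_{0}^{X/2}+\int_{X/2}^{X}$: on $[0,X/2]$ the bound $e^{\nu Y}\leq e^{\nu X/2}$ combined with $e^{-\nu X}$ provides exponential decay absorbing polynomial factors in $X$; on $[X/2,X]$, the substitution $V=X-Y$ replaces $e^{-\nu X}e^{\nu Y}$ by $e^{-\nu V}$ and leaves a manageable integral whose polynomial behavior in $X$ is governed by $(1+Y^{2})^{-1}\simeq X^{-2}$ near $Y\simeq X$. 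In case~1, the assumption $1+\gamma_{3}\leq 0$ makes $J(X)$ decay fast enough at infinity so that the single factor $|\tau|^{\kappa} = X|\epsilon|^{\chi\kappa}$ yields the announced exponent $\chi\kappa(\gamma_{2}+\gamma_{3}+2)-\gamma_{0}$ without tapping into the denominator $(1+|\tau|^{\kappa})^{\gamma_{1}}$. In case~2, $J(X)$ grows more like a positive power of $X$, and one must use the inequality $(1+X|\epsilon|^{\chi\kappa})^{-\gamma_{1}}\leq (X|\epsilon|^{\chi\kappa})^{-\gamma_{1}}$ (valid and exploitable because $\gamma_{1}\geq 1+\gamma_{3}>0$) to squeeze out the supplementary factor $|\epsilon|^{-\chi\kappa\gamma_{1}}$ in (\ref{norm_estim_conv_op_2}). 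Small $X$ is simpler: $J(X)\leq C X^{\gamma_{2}+\gamma_{3}+1/\kappa+1}$, which compensates $(1+X^{2})/X^{1/\kappa}$ thanks to (\ref{constraints_gamma_j_kappa_conv_op}). Tracking the resulting uniform constants through every admissible $(\gamma_{1},\gamma_{2},\gamma_{3})$ is the delicate bookkeeping part; once done, both (\ref{norm_estim_conv_op_1}) and (\ref{norm_estim_conv_op_2}) follow by gathering all powers of $|\epsilon|$.
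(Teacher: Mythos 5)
The paper does not in fact include a proof of this proposition; it defers to the author's earlier work \cite{ma2}, so there is nothing to compare line by line. Your reconstruction, however, is the natural argument and is structurally identical to the paper's explicit proofs of the sibling estimates (Lemma 6 in Section 4.2 and Proposition 5 in Section 3.2): extract the $F^{d}_{(\nu,\beta,\mu,\chi,\kappa,\epsilon)}$-norm of $f$ pointwise, dispose of the $m$-variable via $\sup_{m}|R(im)|/|R_{D}(im)|<\infty$, change variables $h=|\epsilon|^{\chi\kappa}Y$, $X=|\tau|^{\kappa}/|\epsilon|^{\chi\kappa}$ to isolate the $\epsilon$-power, and control the remaining scalar quantity by splitting $J(X)=\int_{0}^{X/2}+\int_{X/2}^{X}$. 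The bookkeeping giving the exponent $\chi\kappa(\gamma_{2}+\gamma_{3}+2)-\gamma_{0}$, and the role of each hypothesis in (\ref{constraints_gamma_j_kappa_conv_op}) at $X\to 0$ and $X\to\infty$, is correct.

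One imprecision in your case 2 deserves attention. The inequality $(1+X|\epsilon|^{\chi\kappa})^{-\gamma_{1}}\leq(X|\epsilon|^{\chi\kappa})^{-\gamma_{1}}$ cannot be applied uniformly in $X$: for $X$ near $0$ it contributes a factor $X^{-\gamma_{1}}$ that wrecks the small-$X$ estimate $X^{\gamma_{2}+\gamma_{3}+2}$ unless one also had $\gamma_{2}+\gamma_{3}+2\geq\gamma_{1}$, which is not assumed. The correct global argument comes from $(1+a)^{-\gamma_{1}}\leq\min(1,a^{-\gamma_{1}})$ and a split of the supremum at $X=|\epsilon|^{-\chi\kappa}$. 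On $\{X\leq|\epsilon|^{-\chi\kappa}\}$ one bounds the $\gamma_{1}$-factor by $1$ and uses the monotonicity of $X^{1+\gamma_{3}}$ together with $|\epsilon|\leq 1$ to get $X^{1+\gamma_{3}}\leq|\epsilon|^{-\chi\kappa(1+\gamma_{3})}\leq|\epsilon|^{-\chi\kappa\gamma_{1}}$ (this is exactly where $\gamma_{1}\geq 1+\gamma_{3}$ enters). On $\{X\geq|\epsilon|^{-\chi\kappa}\}$ your $a^{-\gamma_{1}}$ bound applies, and $1+\gamma_{3}-\gamma_{1}\leq 0$ with $X\geq 1$ gives the same $|\epsilon|^{-\chi\kappa\gamma_{1}}$. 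This closes case 2 without changing the shape of your argument; it is a gap in the write-up rather than in the method.
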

The forthcoming proposition presents norms estimates for some bilinear convolution operators acting on
the aforementioned Banach spaces.
\begin{prop} Let $R_{D}(X),Q_{1}(X)$ and $Q_{2}(X)$ belonging to $\mathbb{C}[X]$ such that
$R_{D}(im) \neq 0$ for all $m \in \mathbb{R}$. Assume that
$$ \mathrm{deg}(R_{D}) \geq \mathrm{deg}(Q_1), \ \ \mathrm{deg}(R_{D}) \geq \mathrm{deg}(Q_2) $$
and choose the real parameter $\mu$ such that
\begin{equation}
\mu > \max( \mathrm{deg}(Q_1) + 1, \mathrm{deg}(Q_2) + 1). \label{mu>deg_Q1_deg_Q2}
\end{equation}
Let $a(m)$ be a continuous function on $\mathbb{R}$ such that
$$ |a(m)| \leq \frac{1}{|R_{D}(im)|} $$
for all $m \in \mathbb{R}$. Then, there exists a constant $C_{3}>0$ (depending on $Q_{1},Q_{2},R_{D},\mu$ and
$\kappa$) such that
\begin{multline}
|| \tau^{\kappa - 1}a(m) \int_{0}^{\tau^{\kappa}} \int_{-\infty}^{+\infty}
Q_{1}(i(m-m_{1}))f( (\tau^{\kappa} - s')^{1/\kappa},m-m_{1})\\
\times Q_{2}(im_{1})g( (s')^{1/\kappa},m_{1})
\frac{1}{(\tau^{\kappa}-s')s'} ds' dm_{1} ||_{(\nu,\beta,\mu,\chi,\kappa,\epsilon)}\\
\leq \frac{C_3}{|\epsilon|^{\chi}} ||f(\tau,m)||_{(\nu,\beta,\mu,\chi,\kappa,\epsilon)}
||g(\tau,m)||_{(\nu,\beta,\mu,\chi,\kappa,\epsilon)}
\end{multline}
for all $f(\tau,m),g(\tau,m) \in F_{(\nu,\beta,\mu,\chi,\kappa,\epsilon)}^{d}$.
\end{prop}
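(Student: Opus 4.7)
The plan is to bound the integrand pointwise using the definition of the target Banach norm, and then handle the $m_{1}$- and $s'$-integrations separately, since the pointwise bound factorizes. For any $h\in F^{d}_{(\nu,\beta,\mu,\chi,\kappa,\epsilon)}$ one reads from the definition
$$ |h(\tau,m)|\leq ||h||_{(\nu,\beta,\mu,\chi,\kappa,\epsilon)}\,(1+|m|)^{-\mu}e^{-\beta|m|}\cdot\frac{|\tau/\epsilon^{\chi}|}{1+|\tau/\epsilon^{\chi}|^{2\kappa}}\,e^{\nu|\tau/\epsilon^{\chi}|^{\kappa}}.$$
Applying this to $f$ evaluated at $(\tau^{\kappa}-s')^{1/\kappa}$ and to $g$ evaluated at $(s')^{1/\kappa}$ separates the bound into a purely Fourier part depending on $m-m_{1}$ and $m_{1}$, and a purely Borel part depending on $s'$ and $\tau$.

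For the $m_{1}$-integration, use $|a(m)|\leq 1/|R_{D}(im)|$ together with the triangle inequality $e^{\beta|m|}\leq e^{\beta|m-m_{1}|}e^{\beta|m_{1}|}$. Under the hypothesis (\ref{mu>deg_Q1_deg_Q2}) on $\mu$, the resulting estimate is of exactly the same shape as the convolution inequality already established in Proposition 1, so the same splitting of the integration into $|m_{1}|\leq|m|/2$ and its complement yields a finite constant $C_{3,1}$ depending only on $Q_{1},Q_{2},R_{D},\mu$, together with the desired weight $(1+|m|)^{-\mu}e^{-\beta|m|}$ on the left-hand side.

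For the $s'$-integration, parametrize $s'=h\tau^{\kappa}$ with $h\in[0,1]$, set $T=|\tau/\epsilon^{\chi}|^{\kappa}$, and use $|\tau^{\kappa}-s'|^{1/\kappa}=|\tau|(1-h)^{1/\kappa}$, $|s'|^{1/\kappa}=|\tau|h^{1/\kappa}$, $|ds'|=|\tau|^{\kappa}dh$. The exponential factors recombine into $e^{\nu T(1-h)+\nu Th}=e^{\nu T}$, and collecting all powers of $|\tau|=T^{1/\kappa}|\epsilon|^{\chi}$ leaves the Borel part of the bound reduced to
\[ \frac{T^{1/\kappa}e^{\nu T}}{|\epsilon|^{\chi}}\,I(T),\qquad I(T):=\int_{0}^{1}\frac{(1-h)^{1/\kappa-1}h^{1/\kappa-1}}{\bigl(1+T^{2}(1-h)^{2}\bigr)\bigl(1+T^{2}h^{2}\bigr)}dh. \]
The single power of $|\epsilon|^{-\chi}$ claimed in the statement therefore emerges automatically from this power counting.

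The main technical obstacle is the uniform-in-$T$ estimate $I(T)\leq C_{3,2}/(1+T^{2})$, because only such a bound turns the prefactor $T^{1/\kappa}e^{\nu T}$ into the correct Borel-variable weight $|\tau/\epsilon^{\chi}|\bigl(1+|\tau/\epsilon^{\chi}|^{2\kappa}\bigr)^{-1}e^{\nu|\tau/\epsilon^{\chi}|^{\kappa}}$ of the target norm. By the symmetry $h\leftrightarrow 1-h$ it suffices to treat $h\in[0,1/2]$; on this range $1-h\geq 1/2$ forces $(1+T^{2}(1-h)^{2})^{-1}\leq 4/(1+T^{2})$, the factor $(1-h)^{1/\kappa-1}$ is bounded (since $1/\kappa-1\leq 0$), and discarding $(1+T^{2}h^{2})^{-1}\leq 1$ leaves the convergent integral $\int_{0}^{1/2}h^{1/\kappa-1}dh$ (finite because $\kappa\geq 1$ gives $1/\kappa-1>-1$). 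The choice of splitting at $h=1/2$ is precisely what lets a single factor $(1+T^{2})^{-1}$ absorb both denominators in $I(T)$. Multiplying the constants from the two steps yields the announced $C_{3}$.
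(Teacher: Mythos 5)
Your proof is correct and follows the same structure as the paper's argument: after factoring out $\|f\|_{(\nu,\beta,\mu,\chi,\kappa,\epsilon)}\|g\|_{(\nu,\beta,\mu,\chi,\kappa,\epsilon)}$, the remaining supremum is split into a Fourier-variable piece (which is exactly the integral bounded in Proposition 1, via the degree hypotheses and the triangle inequality on $|m|$) and a Borel-variable piece that must yield the single power $|\epsilon|^{-\chi}$. The only difference is that the paper simply cites estimates (23) and (24) of \cite{ma2} for the Borel piece, while you derive it directly by the substitution $s'=h\tau^{\kappa}$, the collapse of the exponentials into $e^{\nu T}$, and the uniform bound $(1+T^{2})I(T)\leq C$ obtained by splitting at $h=1/2$; this makes your argument a self-contained version of the same route.
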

\begin{proof} We follow similar steps as in the proof of Proposition 3 from \cite{ma2}. By definition
of the norm, we can write
\begin{multline}
B = || \tau^{\kappa - 1}a(m) \int_{0}^{\tau^{\kappa}} \int_{-\infty}^{+\infty}
Q_{1}(i(m-m_{1}))f( (\tau^{\kappa} - s')^{1/\kappa},m-m_{1}) \\
\times Q_{2}(im_{1})g( (s')^{1/\kappa},m_{1})
\frac{1}{(\tau^{\kappa}-s')s'} ds' dm_{1} ||_{(\nu,\beta,\mu,\chi,\kappa,\epsilon)}\\
= \sup_{\tau \in \bar{D}(0,\rho) \cup S_{d},m \in \mathbb{R}}
(1 + |m|)^{\mu} \exp(\beta |m|) \frac{1 + |\frac{\tau}{\epsilon^{\chi}}|^{2\kappa}}{
|\frac{\tau}{\epsilon^{\chi}}|}
\exp( -\nu |\frac{\tau}{\epsilon^{\chi}}|^{\kappa} )|a(m)|\\
\times |\tau^{\kappa-1}\int_{0}^{\tau^{\kappa}}
\int_{-\infty}^{+\infty} \{ (1 + |m-m_{1}|)^{\mu} \exp( \beta |m-m_{1}|)\\
\times
\frac{1 + \frac{|\tau^{\kappa} - s'|^{2}}{|\epsilon|^{\chi 2\kappa}}}{
\frac{|\tau^{\kappa} - s'|^{1/\kappa}}{|\epsilon|^{\chi}}}
\exp( -\nu \frac{|\tau^{\kappa} - s'|}{|\epsilon|^{\chi \kappa}} )
f((\tau^{\kappa} - s')^{1/\kappa},m-m_{1}) \}\\
\times
\{ (1 + |m_{1}|)^{\mu} \exp( \beta |m_{1}| ) \frac{ 1 + \frac{|s'|^{2}}{|\epsilon|^{\chi 2\kappa}} }{
\frac{|s'|^{1/\kappa}}{|\epsilon|^{\chi}} }
\exp(-\nu \frac{|s'|}{|\epsilon|^{\chi \kappa}} ) g((s')^{1/\kappa},m_{1}) \}
\times \mathcal{B}(\tau,s,m,m_{1}) ds' dm_{1} | \label{defin_B}
\end{multline}
where
\begin{multline*}
\mathcal{B}(\tau,s,m,m_{1}) = \frac{\exp( -\beta |m-m_{1}| ) \exp( -\beta |m_{1}| )
|Q_{1}(i(m-m_{1}))| |Q_{2}(im_{1})| }{(1 + |m-m_{1}|)^{\mu}(1 + |m_{1}|)^{\mu}}\\
\times
\frac{ \frac{|s'|^{1/\kappa} |\tau^{\kappa} - s'|^{1/\kappa}}{|\epsilon|^{2\chi}} }{
(1 + \frac{|\tau^{\kappa} - s'|^{2}}{|\epsilon|^{\chi 2\kappa}})
(1 + \frac{|s'|^{2}}{|\epsilon|^{\chi 2\kappa}})} \exp( \nu \frac{|\tau^{\kappa} - s'|}{|\epsilon|^{\chi \kappa}} )
\exp( \nu \frac{|s'|}{|\epsilon|^{\chi \kappa}} )
\frac{1}{(\tau^{\kappa} - s')s'}.
\end{multline*}
By definition of the norms of $f$ and $g$ and according to the triangular inequality
$|m| \leq |m-m_{1}| + |m_{1}|$ for all $m,m_{1} \in \mathbb{R}$, we deduce that
\begin{equation}
B \leq C_{3}(\epsilon) ||f(\tau,m)||_{(\nu,\beta,\mu,\chi,\kappa,\epsilon)}
||g(\tau,m)||_{(\nu,\beta,\mu,\chi,\kappa,\epsilon)} \label{B<=norm_f_times_norm_g} 
\end{equation}
where
\begin{multline*}
C_{3}(\epsilon) = \sup_{\tau \in \bar{D}(0,\rho) \cup S_{d},m \in \mathbb{R}}
(1 + |m|)^{\mu} \frac{1 + |\frac{\tau}{\epsilon^{\chi}}|^{2\kappa}}{
|\frac{\tau}{\epsilon^{\chi}}|}
\exp( -\nu |\frac{\tau}{\epsilon^{\chi}}|^{\kappa} )
|\tau|^{\kappa - 1} |a(m)|\\
\times \int_{0}^{|\tau|^{\kappa}}
\int_{-\infty}^{+\infty}
\frac{|Q_{1}(i(m-m_{1}))| |Q_{2}(im_{1})| }{(1 + |m-m_{1}|)^{\mu}(1 + |m_{1}|)^{\mu}}
\frac{(h')^{1/\kappa} (|\tau|^{\kappa} - h')^{1/\kappa}}{|\epsilon|^{2\chi}}
\frac{1}{(1 + \frac{(|\tau|^{\kappa} - h')^{2}}{|\epsilon|^{\chi 2\kappa}})
(1 + \frac{(h')^{2}}{|\epsilon|^{\chi 2\kappa}})}\\
\times 
\exp( \nu \frac{|\tau|^{\kappa} - h'}{|\epsilon|^{\chi \kappa}} )
\exp( \nu \frac{h'}{|\epsilon|^{\chi \kappa}} )
\frac{1}{(|\tau|^{\kappa} - h')h'} dh' dm_{1}.
\end{multline*}
We provide upper bounds that can be split in two parts,
\begin{equation}
C_{3}(\epsilon) \leq C_{3.1}C_{3.2}(\epsilon) \label{C3<=C31_C32} 
\end{equation}
where
\begin{equation}
C_{3.1} = \sup_{m \in \mathbb{R}} \frac{(1 + |m|)^{\mu}}{|R_{D}(im)|}
\int_{-\infty}^{+\infty} \frac{|Q_{1}(i(m-m_{1}))|
|Q_{2}(im_{1})| }{(1 + |m-m_{1}|)^{\mu}(1 + |m_{1}|)^{\mu}} dm_{1} \label{C31_defin}
\end{equation}
and
$$
C_{3.2}(\epsilon) = \sup_{\tau \in \bar{D}(0,\rho) \cup S_{d}}
\frac{1 + |\frac{\tau}{\epsilon^{\chi}}|^{2\kappa}}{
|\frac{\tau}{\epsilon^{\chi}}|}|\tau|^{\kappa - 1}
\int_{0}^{|\tau|^{\kappa}} \frac{
\frac{(h')^{1/\kappa} (|\tau|^{\kappa} - h')^{1/\kappa}}{|\epsilon|^{2\chi}} }{
(1 + \frac{(|\tau|^{\kappa} - h')^{2}}{|\epsilon|^{\chi 2\kappa}})
(1 + \frac{(h')^{2}}{|\epsilon|^{\chi 2\kappa}})}
\frac{1}{(|\tau|^{\kappa} - h')h'} dh'.
$$
By construction, we can select three constants $\mathfrak{Q}_{1},\mathfrak{Q}_{2},\mathfrak{R}>0$ such that
\begin{multline}
|Q_{1}(i(m-m_{1}))| \leq \mathfrak{Q}_{1} (1 + |m-m_{1}|)^{\mathrm{deg}(Q_1)} \ \ , \ \
|Q_{2}(im_{1})| \leq \mathfrak{Q}_{1} (1 + |m_{1}|)^{\mathrm{deg}(Q_2)},\\
|R_{D}(im)| \geq \mathfrak{R}(1 + |m|)^{\mathrm{deg}(R_{D})}
\end{multline}
for all $m,m_{1} \in \mathbb{R}$. We deduce that
\begin{equation}
C_{3.1} \leq \frac{\mathfrak{Q}_{1} \mathfrak{Q}_{2}}{\mathfrak{R}}
\sup_{m \in \mathbb{R}}(1 + |m|)^{\mu - \mathrm{deg}(R_{D})}\int_{-\infty}^{+\infty}
\frac{1}{(1 + |m-m_{1}|)^{\mu - \mathrm{deg}(Q_1)}(1 + |m_{1}|)^{\mu - \mathrm{deg}(Q_2)}} dm_{1}
\label{bounds_C31}
\end{equation}
which is finite under the condition (\ref{mu>deg_Q1_deg_Q2}) according to Lemma 4 of \cite{ma}.

On the other hand, with the help of the estimates (23) and (24) from \cite{ma2}, we conclude that a constant $C_{3.2}>0$ can be picked
out (depending exclusively on $\kappa$) with
\begin{equation}
C_{3.2}(\epsilon) \leq \frac{C_{3.2}}{|\epsilon|^{\chi}} \label{C32_bounds}
\end{equation}
We finish the proof by collecting (\ref{defin_B}), (\ref{B<=norm_f_times_norm_g}), (\ref{C3<=C31_C32}),
(\ref{C31_defin}), (\ref{bounds_C31}) and (\ref{C32_bounds}) which leads to the statement of Proposition 5.
\end{proof}

\subsection{Construction of formal solutions}
Within this section, we search for time rescaled solutions to (\ref{main_PDE_u}) of the form
$$ u(t,z,\epsilon) = \epsilon^{-m_{0}}U(\epsilon^{\alpha}t,z,\epsilon) $$
where $\alpha \in \mathbb{Q}$. One can check that the expression $U(T,z,\epsilon)$ formally solves the next
nonlinear PDE
\begin{multline}
( \sum_{l=1}^{q}a_{l} \epsilon^{m_{l}-m_{0}-\alpha k_{l}}T^{k_l} + a_{0})Q(\partial_{z})U(T,z,\epsilon)\\+
(\sum_{l=0}^{M}c_{l} \epsilon^{\mu_{l} - 2m_{0} - \alpha h_{l}} T^{h_l})
Q_{1}(\partial_{z})U(T,z,\epsilon)Q_{2}(\partial_{z})U(T,z,\epsilon)\\
= \sum_{j=0}^{Q}b_{j}(z) \epsilon^{n_{j} - \alpha b_{j}} T^{b_{j}} + F^{\theta_{F}}(\epsilon^{-\alpha}T,z,\epsilon)
+ \sum_{l=1}^{D} \epsilon^{\Delta_{l}+ \alpha (\delta_{l}-d_{l}) - m_{0}} T^{d_{l}} R_{l}(\partial_{z})
 \partial_{T}^{\delta_l}U(T,z,\epsilon). \label{main_PDE_U}
\end{multline}
We make the next further assumptions. We choose $\alpha$ such that
\begin{equation}
\Delta_{D} + \alpha(\delta_{D}-d_{D})-m_{0} = 0 \label{cond_deltaD_alpha} 
\end{equation}
We suppose the existence of a positive integer $\kappa \geq 1$ with
\begin{equation}
d_{D} = \delta_{D}(\kappa + 1) \ \ , \ \ d_{l} = \delta_{l}(\kappa + 1) + d_{l,\kappa}  \label{cond_deltal_alpha}
\end{equation}
for some integers $d_{l,\kappa} \geq 1$, for all $1 \leq l \leq D-1$. With the help of the formula (8.7) from
\cite{taya} p. 3630, we can expand the following pieces of (\ref{main_PDE_U}) in term of the irregular operator
$T^{\kappa + 1}\partial_{T}$,
\begin{multline}
T^{d_{D}}\partial_{T}^{\delta_{D}}R_{D}(\partial_{z})U(T,z,\epsilon) =
T^{\delta_{D}(\kappa + 1)}\partial_{T}^{\delta_{D}}R_{D}(\partial_{z})U(T,z,\epsilon)=\\
R_{D}(\partial_{z}) \left( (T^{\kappa+1}\partial_{T})^{\delta_{D}} + 
\sum_{1 \leq p \leq \delta_{D}-1} A_{\delta_{D},p} T^{\kappa(\delta_{D}-p)}(T^{\kappa+1}\partial_{T})^{p} \right)
U(T,z,\epsilon) \label{expand_T_partial_T_delta_D_U}
\end{multline}
for some real numbers $A_{\delta_{D},p}$, $1 \leq p \leq \delta_{D}-1$ and
\begin{multline}
T^{d_{l}}\partial_{T}^{\delta_{l}}R_{l}(\partial_{z})U(T,z,\epsilon) =
T^{d_{l,\kappa}} T^{\delta_{l}(\kappa + 1)} \partial_{T}^{\delta_l}R_{l}(\partial_{z})U(T,z,\epsilon)=\\
R_{l}(\partial_{z})T^{d_{l,\kappa}} \left( (T^{\kappa+1}\partial_{T})^{\delta_l} +
\sum_{1 \leq p \leq \delta_{l}-1} A_{\delta_{l},p} T^{\kappa(\delta_{l}-p)}(T^{\kappa+1}\partial_{T})^{p} \right)
U(T,z,\epsilon) \label{expand_T_partial_T_delta_l_U}
\end{multline}
for well chosen real numbers $A_{\delta_{l},p}$, $1 \leq p \leq \delta_{l}-1$. Notice that, by convention,
the sum $\sum_{1 \leq p \leq \delta_{l}-1}[..]$ appearing in (\ref{expand_T_partial_T_delta_l_U}) is vanishing
provided that $\delta_{l}=1$.

We now furnish the formal Taylor expansion of the part of the forcing term
$F^{\theta_{F}}(\epsilon^{-\alpha}T,z,\epsilon)$ with respect to $T$ at $T=0$. Making use of the convergent
Taylor expansion
of $\exp(-Tu/\epsilon^{\gamma+\alpha}) - 1$ w.r.t $u$ at $u=0$, we can write
\begin{equation}
F^{\theta_{F}}(\epsilon^{-\alpha}T,z,\epsilon) = \sum_{n \geq 1} F_{n}(z,\epsilon) T^{n} \label{Taylor_F_wrt_T}
\end{equation}
where the coefficients $F_{n}(z,\epsilon)$ are expressed as an inverse Fourier transform
$$ F_{n}(z,\epsilon) = \mathcal{F}^{-1}(m \mapsto \psi_{n}(m,\epsilon))(z) $$
\begin{equation}
\psi_{n}(m,\epsilon) =  \epsilon^{n_{F}} \int_{L_{\theta_{F}}}
e^{-K_{F}u} \frac{F_{1}(u)}{F_{2}(u)} \frac{u^{n}}{n!} du C_{F}(m) (-\frac{1}{\epsilon^{\gamma + \alpha}})^{n}
\end{equation}
for all $n \geq 1$. Let us assume now, that the expression $U(T,z,\epsilon)$ has a formal power series expansion
\begin{equation}
U(T,z,\epsilon) = \sum_{n \geq 1} U_{n}(z,\epsilon) T^{n} \label{Taylor_U_wrt_T}
\end{equation}
where each coefficient $U_{n}(z,\epsilon)$ is defined as an inverse Fourier transform
$$ U_{n}(z,\epsilon) = \mathcal{F}^{-1}(m \mapsto \omega_{n}(m,\epsilon) )(z) $$
for some function $m \mapsto \omega_{n}(m,\epsilon)$ belonging to the Banach space $E_{(\beta,\mu)}$
and relying analytically on the parameter $\epsilon$ on some punctured disc $D(0,\epsilon_{0}) \setminus \{ 0 \}$
centered at 0 with radius $\epsilon_{0}>0$. We consider the next formal series
$$ \omega_{\kappa}(\tau,m,\epsilon) = \sum_{n \geq 1} \frac{\omega_{n}(m,\epsilon)}{\Gamma(n/\kappa)} \tau^{n} $$
obtained by formally applying a $m_{\kappa}-$Borel transform w.r.t $T$ and Fourier transform w.r.t $z$ to the
formal series (\ref{Taylor_U_wrt_T}).
We also introduce $\psi_{\kappa}(\tau,m,\epsilon)$ realized as a $m_{\kappa}-$Borel transform w.r.t $T$ and
Fourier transform w.r.t $z$ of the formal series (\ref{Taylor_F_wrt_T}),
$$ \Psi_{\kappa}(\tau,m,\epsilon) = \sum_{n \geq 1} \psi_{n}(m,\epsilon)
\frac{\tau^{n}}{\Gamma(\frac{n}{\kappa})}. $$
Under the restrictions (\ref{cond_deltaD_alpha}) and (\ref{cond_deltal_alpha}), we check that
$\omega_{\kappa}(\tau,m,\epsilon)$ must satisfy some nonlinear integral convolution equation by making use of the
properties of the $m_{\kappa}-$Borel transform listed in Proposition 3 and Fourier inverse transform discussed
in Proposition 2, with the help of the prepared expansions (\ref{expand_T_partial_T_delta_D_U}),
(\ref{expand_T_partial_T_delta_l_U}). Namely, we get the next problem
\begin{multline}
Q(im) \left( \sum_{l=1}^{q}a_{l}\epsilon^{m_{l} - m_{0} - \alpha k_{l}}
\frac{\tau^{\kappa}}{\Gamma(\frac{k_l}{\kappa})}
\int_{0}^{\tau^{\kappa}} (\tau^{\kappa}-s)^{\frac{k_l}{\kappa}-1}\omega_{\kappa}(s^{1/\kappa},m,\epsilon)
\frac{ds}{s} + a_{0}\omega_{\kappa}(\tau,m,\epsilon) \right)\\
+ \sum_{l=0}^{M} c_{l}\epsilon^{\mu_{l} - 2m_{0} - \alpha h_{l}}
\frac{\tau^{\kappa}}{\Gamma(\frac{h_l}{\kappa})} \int_{0}^{\tau^{\kappa}}
(\tau^{\kappa} - s)^{\frac{h_l}{\kappa} - 1}\\
\times \left( s \int_{0}^{s} \frac{1}{(2\pi)^{1/2}}
\int_{-\infty}^{+\infty} Q_{1}(i(m-m_{1}))\omega_{\kappa}((s-s')^{1/\kappa},m-m_{1},\epsilon) \right. \\
\left. \times Q_{2}(im_{1})\omega_{\kappa}((s')^{1/\kappa},m_{1},\epsilon)dm_{1} \frac{1}{(s-s')s'} ds' \right)
\frac{ds}{s} = \sum_{j=0}^{Q} B_{j}(m) \epsilon^{n_{j} - \alpha b_{j}}
\frac{\tau^{b_j}}{\Gamma(\frac{b_j}{\kappa})} + \Psi_{\kappa}(\tau,m,\epsilon)\\
+ R_{D}(im) \left( (\kappa \tau^{\kappa})^{\delta_{D}}\omega_{\kappa}(\tau,m,\epsilon) +
\sum_{1 \leq p \leq \delta_{D}-1} A_{\delta_{D},p}
\frac{\tau^{\kappa}}{\Gamma( \delta_{D}-p )} \right. \\
\times \left.
\int_{0}^{\tau^{\kappa}} (\tau^{\kappa} - s)^{\delta_{D}-p-1} (\kappa s)^{p}
\omega_{\kappa}(s^{1/\kappa},m,\epsilon) \frac{ds}{s} \right) + \sum_{l=1}^{D-1}
\epsilon^{\Delta_{l} + \alpha(\delta_{l} - d_{l}) - m_{0}} R_{l}(im)\\
\times
\left( \frac{\tau^{\kappa}}{\Gamma( \frac{d_{l,\kappa}}{\kappa} )}
\int_{0}^{\tau^{\kappa}}(\tau^{\kappa} - s)^{\frac{d_{l,\kappa}}{\kappa} - 1}
(\kappa s)^{\delta_{l}} \omega_{\kappa}(s^{1/\kappa},m,\epsilon) \frac{ds}{s} +
\sum_{1 \leq p \leq \delta_{l}-1} A_{\delta_{l},p} \right. \\
\left. \times 
\frac{\tau^{\kappa}}{\Gamma( \frac{d_{l,\kappa} + \kappa(\delta_{l}-p)}{\kappa} )}
\int_{0}^{\tau^{\kappa}} (\tau^{\kappa} - s)^{\frac{d_{l,\kappa} + \kappa(\delta_{l}-p)}{\kappa} - 1}
(\kappa s)^{p} \omega_{\kappa}(s^{1/\kappa},m,\epsilon) \frac{ds}{s} \right) \label{main_conv_eq_omega_kappa}
\end{multline}
As above, we assume by convention that the sum $\sum_{1 \leq p \leq \delta_{l}-1} [..]$ appearing in
(\ref{main_conv_eq_omega_kappa}) vanishes whenever $\delta_{l}=1$.

\subsection{Analytic and continuous solutions of a nonlinear convolution equation with complex parameter}

Our principal aim is the construction of a unique solution of the problem
(\ref{main_conv_eq_omega_kappa}) inside the Banach space described in Subsection 3.2.

\noindent We make the following further assumptions. The conditions below are very similar to the ones
proposed in Section 4 of \cite{lama1} and in Section 5 of \cite{ma2}.
Namely, we demand that there exists an unbounded sector
$$ S_{Q,R_{D}} = \{ z \in \mathbb{C} / |z| \geq r_{Q,R_{D}} \ \ , \ \
|\mathrm{arg}(z) - d_{Q,R_{D}}| \leq \eta_{Q,R_{D}} \} $$
with direction $d_{Q,R_{D}} \in \mathbb{R}$, aperture $\eta_{Q,R_{D}}>0$ for
some radius $r_{Q,R_{D}}>0$ such that
\begin{equation}
\frac{Q(im)}{R_{D}(im)} \in S_{Q,R_{D}} \label{quotient_Q_RD_in_S}
\end{equation} 
for all $m \in \mathbb{R}$. The polynomial $P_{m}(\tau) = Q(im)a_{0} -
R_{D}(im)\kappa^{\delta_D}
\tau^{\delta_{D}\kappa}$ can be factorized in the form
\begin{equation}
P_{m}(\tau) =
-R_{D}(im)\kappa^{\delta_D}\Pi_{l=0}^{\delta_{D}\kappa-1} (\tau - q_{l}(m)) \label{factor_P_m}
\end{equation}
where
\begin{equation}
q_{l}(m) = (\frac{|a_{0}Q(im)|}{|R_{D}(im)|\kappa^{\delta_{D}}})^{\frac{1}{\delta_{D}\kappa}}
\exp( \sqrt{-1}( \mathrm{arg}( \frac{a_{0}Q(im)}{R_{D}(im)\kappa^{\delta_{D}}})
\frac{1}{\delta_{D}\kappa} + \frac{2\pi l}{\delta_{D}\kappa} ) ) \label{defin_roots}
\end{equation}
for all $0 \leq l \leq \delta_{D}\kappa-1$, all $m \in \mathbb{R}$.

We select an unbounded sector $S_{d}$ centered at 0, a small closed disc $\bar{D}(0,\rho)$ and we require
the sector $S_{Q,R_{D}}$ to fulfill the next conditions.\medskip

\noindent 1) There exists a constant $M_{1}>0$ such that
\begin{equation}
|\tau - q_{l}(m)| \geq M_{1}(1 + |\tau|) \label{root_cond_1}
\end{equation}
for all $0 \leq l \leq \delta_{D}\kappa-1$, all $m \in \mathbb{R}$, all $\tau \in S_{d} \cup \bar{D}(0,\rho)$.
Indeed,
from (\ref{quotient_Q_RD_in_S}) and the explicit expression (\ref{defin_roots}) of $q_{l}(m)$, we first observe that
$|q_{l}(m)| > 2\rho$ for every $m \in \mathbb{R}$, all $0 \leq l \leq \delta_{D}\kappa-1$ for an appropriate
choice of $r_{Q,R_{D}}$
and of $\rho>0$. We also see that for all $m \in \mathbb{R}$, all $0 \leq l \leq \delta_{D}\kappa-1$, the roots
$q_{l}(m)$ remain in a union $\mathcal{U}$ of unbounded sectors centered at 0 that do not cover a full neighborhood
of
the origin in $\mathbb{C}^{\ast}$ provided that $\eta_{Q,R_{D}}$ is small enough. Therefore,
one can choose an adequate
sector $S_{d}$ such that $S_{d} \cap \mathcal{U} = \emptyset$ with the property that for all
$0 \leq l \leq \delta_{D}\kappa-1$ the quotients $q_{l}(m)/\tau$ lay outside
some small disc centered at 1 in $\mathbb{C}$ for all $\tau \in S_{d}$, all $m \in \mathbb{R}$. This yields
(\ref{root_cond_1})
for some small constant $M_{1}>0$.\medskip

\noindent 2) There exists a constant $M_{2}>0$ such that
\begin{equation}
|\tau - q_{l_0}(m)| \geq M_{2}|q_{l_0}(m)| \label{root_cond_2}
\end{equation}
for some $l_{0} \in \{0,\ldots,\delta_{D}\kappa-1 \}$, all $m \in \mathbb{R}$, all
$\tau \in S_{d} \cup \bar{D}(0,\rho)$. Indeed, for the
sector $S_{d}$ and the disc $\bar{D}(0,\rho)$ chosen as above in 1), we notice that for any fixed
$0 \leq l_{0} \leq \delta_{D}\kappa-1$, the quotient $\tau/q_{l_0}(m)$ stays outside a small disc centered at 1
in $\mathbb{C}$
for all $\tau \in S_{d} \cup \bar{D}(0,\rho)$, all $m \in \mathbb{R}$. Hence (\ref{root_cond_2}) must hold
for some small
constant $M_{2}>0$.\medskip

By construction
of the roots (\ref{defin_roots}) in the factorization (\ref{factor_P_m}) and using the lower bound estimates
(\ref{root_cond_1}), (\ref{root_cond_2}), we get a constant $C_{P}>0$ such that
\begin{multline}
|P_{m}(\tau)| \geq M_{1}^{\delta_{D}\kappa-1}M_{2}|R_{D}(im)\kappa^{\delta_D}
|(\frac{|a_{0}Q(im)|}{|R_{D}(im)|\kappa^{\delta_{D}}})^{\frac{1}{\delta_{D}\kappa}}
(1+|\tau|)^{\delta_{D}\kappa-1}\\
\geq M_{1}^{\delta_{D}\kappa-1}M_{2}
\frac{\kappa^{\delta_D}|a_{0}|^{\frac{1}{\delta_{D}\kappa}}}{(\kappa^{\delta_{D}})^{\frac{1}{\delta_{D}\kappa}}}
(r_{Q,R_{D}})^{\frac{1}{\delta_{D}\kappa}} |R_{D}(im)| \\
\times (\min_{x \geq 0}
\frac{(1+x)^{\delta_{D}\kappa-1}}{(1+x^{\kappa})^{\delta_{D} - \frac{1}{\kappa}}})
(1 + |\tau|^{\kappa})^{\delta_{D} - \frac{1}{\kappa}}\\
= C_{P} (r_{Q,R_{D}})^{\frac{1}{\delta_{D}\kappa}} |R_{D}(im)|
(1+|\tau|^{\kappa})^{\delta_{D} - \frac{1}{\kappa}} \label{low_bounds_P_m}
\end{multline}
for all $\tau \in S_{d} \cup \bar{D}(0,\rho)$, all $m \in \mathbb{R}$.\medskip

In a first step, we show that $\Psi_{\kappa}(\tau,m,\epsilon)$ belongs to
$F_{(\nu,\beta,\mu,\chi,\kappa,\epsilon)}^{d}$, for any sector $S_{d}$, any fixed disc $D(0,\rho)$, for
$\beta>0,\mu > 1$ set above in
(\ref{defin_b_j}), for $\kappa \geq 1$ given in (\ref{cond_deltal_alpha}), for some $\nu>0$ depending
on $\kappa,K_{F}$ and $\theta_{F}$ prescribed in (\ref{defin_F_tzepsilon}) and (\ref{defin_omega_F}), provided that
\begin{equation}
\gamma + \alpha \leq \chi \ \ , \ \ \chi \kappa > \frac{1}{2} \label{cond_gamma_alpha_chi}
\end{equation}
hold. Notice that the second constraint of (\ref{cond_gamma_alpha_chi}) will only be needed later on
in Definition 4. Indeed, since the halfline $L_{\theta_{F}}$ avoids the roots
of $F_{2}(\tau)$, and from the fact that $\mathrm{deg}(F_{1}) \leq \mathrm{deg}(F_{2})$, we get a
constant $C_{F_{1},F_{2}}>0$ such that
\begin{equation}
\left| \frac{F_{1}(u)}{F_{2}(u)} \right| \leq C_{F_{1},F_{2}} 
\end{equation}
for all $u \in L_{\theta_{F}}$. We take a positive real number $\delta_{1}>0$ such that
$\cos(\theta_{F})> \delta_{1}$ and deduce the estimates
\begin{multline}
\left| \int_{L_{\theta_{F}}} e^{-K_{F}u} \frac{F_{1}(u)}{F_{2}(u)} \frac{u^{n}}{n!} du \right|
\leq C_{F_{1},F_{2}} \int_{0}^{+\infty} \exp( -K_{F}r\cos(\theta_{F})) \frac{r^n}{n!} dr\\
\leq C_{F_{1},F_{2}} \int_{0}^{+\infty} \exp( -K_{F}\delta_{1}r) \frac{r^n}{n!} dr
= C_{F_{1},F_{2}} (\frac{1}{K_{F}\delta_{1}})^{n+1}
\end{multline}
by definition of $n! = \int_{0}^{+\infty} e^{-u}u^{n} du$, for $n \geq 1$. We deduce that
\begin{equation}
|\Psi_{\kappa}(\tau,m,\epsilon)| \leq C_{F_{1},F_{2}}|\epsilon|^{n_{F}} |C_{F}(m)| \label{maj_Psi_kappa_C_E}
E(\tau,\epsilon) 
\end{equation}
where
$$ E(\tau,\epsilon) = \sum_{n \geq 1} (\frac{1}{K_{F}\delta_{1}})^{n+1}
\frac{|\frac{\tau}{\epsilon^{\gamma + \alpha}}|^{n}}{\Gamma(n/\kappa)}
= |\frac{\tau}{\epsilon^{\gamma+\alpha}}| \sum_{n \geq 0} (\frac{1}{K_{F}\delta_{1}})^{n+2}
\frac{|\frac{\tau}{\epsilon^{\gamma + \alpha}}|^{n}}{\Gamma(\frac{n+1}{\kappa})}$$
We now provide estimates for the function $E(\tau,\epsilon)$.\\
We first recall that $\Gamma(a+x) \sim x^{a}\Gamma(x)$ as $x \rightarrow +\infty$, for any real
number $a \in \mathbb{R}$, see \cite{ba2}, Appendix B.3. We deduce that
\begin{equation}
\Gamma(\frac{n+1}{\kappa}) \sim (\frac{n}{\kappa} + 1)^{\frac{1}{\kappa} - 1}\Gamma( \frac{n}{\kappa} + 1)
\label{Gamma_n_kappa_estimates}
\end{equation}
as $n \rightarrow +\infty$. Furthermore, we take $b>1$ and a constant $B_{\kappa}>0$ with
\begin{equation}
(\frac{n}{\kappa} + 1)^{1 - \frac{1}{\kappa}} \leq B_{\kappa} b^{n} \label{power_n_kappa_estimates} 
\end{equation}
for all $n \geq 0$. Gathering (\ref{Gamma_n_kappa_estimates}) and (\ref{power_n_kappa_estimates}),
we extract a constant $C_{\kappa}>0$ such that
\begin{equation}
E(\tau,\epsilon) \leq B_{\kappa}C_{\kappa}(\frac{1}{K_{F}\delta_{1}})^{2}
|\frac{\tau}{\epsilon^{\gamma+\alpha}}| \sum_{n \geq 0}
\frac{1}{\Gamma( \frac{n}{\kappa} + 1)} (\frac{b|\tau|}{K_{F}\delta_{1}|\epsilon|^{\gamma + \alpha}})^{n}
\end{equation}
for all $\tau \in \mathbb{C}$, all $\epsilon \in \mathbb{C}^{\ast}$. At this point, we remind that the Mittag-Leffler's functions
$E_{\beta}(x) = \sum_{n \geq 0} x^{n}/\Gamma(1 + \beta n)$ with index $\beta>0$ satisfies the next estimates : there
exists a constant $E_{\beta}>0$ with
$$ E_{\beta}(x) \leq E_{\beta}e^{x^{1/\beta}} $$
for all $x \geq 0$, see \cite{ba2}, Appendix B.4. We deduce the next bounds for $E(\tau,\epsilon)$,
\begin{equation}
E(\tau,\epsilon) \leq B_{\kappa}C_{\kappa}(\frac{1}{K_{F}\delta_{1}})^{2}E_{1/\kappa}
|\frac{\tau}{\epsilon^{\gamma+\alpha}}|
\exp( (\frac{b}{K_{F}\delta_{1}})^{\kappa}( \frac{|\tau|}{|\epsilon|^{\gamma + \alpha}} )^{\kappa} )
\end{equation}
for some constant $E_{1/\kappa}>0$, for all $\tau \in \mathbb{C}$, all $\epsilon \in \mathbb{C}^{\ast}$. Using
(\ref{cond_gamma_alpha_chi}), we get that
\begin{multline}
E(\tau,\epsilon) \leq B_{\kappa}C_{\kappa}(\frac{1}{K_{F}\delta_{1}})^{2}E_{1/\kappa}
\frac{|\tau|}{|\epsilon|^{\chi}}
\exp( 2(\frac{b}{K_{F}\delta_{1}})^{\kappa}( \frac{|\tau|}{|\epsilon|^{\chi}} )^{\kappa} )\\
\times
\frac{1}{1 + |\frac{\tau}{\epsilon^{\chi}}|^{2\kappa}} (1 + |\frac{\tau}{\epsilon^{\chi}}|^{2\kappa})
\exp( -(\frac{b}{K_{F}\delta_{1}})^{\kappa}( \frac{|\tau|}{|\epsilon|^{\chi}} )^{\kappa} ) \\
\leq B_{\kappa}C_{\kappa}(\frac{1}{K_{F}\delta_{1}})^{2}E_{1/\kappa}G_{K_{F},\delta_{1},\kappa}
\frac{|\tau|}{|\epsilon|^{\chi}}\frac{1}{1 + |\frac{\tau}{\epsilon^{\chi}}|^{2\kappa}}
\exp( 2(\frac{b}{K_{F}\delta_{1}})^{\kappa}( \frac{|\tau|}{|\epsilon|^{\chi}} )^{\kappa} ) \label{maj_E_tau_epsilon_exp}
\end{multline}
for all $\tau \in \mathbb{C}$ and $\epsilon \in \mathbb{C}^{\ast}$ with $|\epsilon|<1$, where
$$ G_{K_{F},\delta_{1},\kappa} = \sup_{x \geq 0} (1 + x^{2\kappa})\exp(
-(\frac{b}{K_{F}\delta_{1}})^{\kappa} x^{\kappa} ). $$
Finally, collecting (\ref{maj_Psi_kappa_C_E}) and (\ref{maj_E_tau_epsilon_exp}) yields the fact that
$\Psi_{\kappa}(\tau,m,\epsilon)$ belongs to $F_{(\nu,\beta,\mu,\chi,\kappa,\epsilon)}^{d}$ for all
the parameters specified as above.\medskip

In the next proposition, we disclose sufficient conditions for which the main convolution equation
(\ref{main_conv_eq_omega_kappa}) gets a unique solution in the Banach space
$F_{(\nu,\beta,\mu,\chi,\kappa,\epsilon)}^{d}$ described in Section 3.2, for the parameters chosen as above.

\begin{prop} We take for granted that the next additional assumptions hold,
\begin{equation}
\chi k_{l} + m_{l} - m_{0} - \alpha k_{l} \geq 0 \ \ , \ \ \delta_{D} \geq 1/\kappa \label{constraint_kl_ml} 
\end{equation}
for all $1 \leq l \leq q$,
\begin{equation}
\chi b_{j} + n_{j} - \alpha b_{j} \geq 0 \ \ , \ \ b_{j} \geq 1 \label{constraint_chi_bj_ni_alpha}
\end{equation}
for all $0 \leq j \leq Q$,
\begin{equation}
\chi \kappa ( \frac{d_{l,\kappa}}{\kappa} + \delta_{l} ) + \Delta_{l} + \alpha(\delta_{l} - d_{l}) - m_{0}
-\chi \kappa (\delta_{D} - \frac{1}{\kappa}) \geq 0 \ \ , \ \ \delta_{D} - \frac{1}{\kappa} \geq \delta_{l}
\label{constraint_chi_dlkappa_deltal_Deltal_alpha_m0}
\end{equation}
for $1 \leq l \leq D-1$ and
\begin{equation}
\chi \kappa ( \frac{h_l}{\kappa} + \frac{1}{\kappa}) + \mu_{l} - 2m_{0} - \alpha h_{l}
- \chi \kappa (\delta_{D} - \frac{1}{\kappa}) - \chi \geq 0 \label{constraint_chi_kappa_hl_mul_m0_alpha_deltaD}
\end{equation}
for all $0 \leq l \leq M$. Then, there exists a radius $r_{Q,R_{D}}>0$, $\epsilon_{0}>0$ and a constant $\varpi>0$ such that the equation
(\ref{main_conv_eq_omega_kappa}) has a unique solution $\omega_{\kappa}^{d}(\tau,m,\epsilon)$ in the Banach space
$F_{(\nu,\beta,\mu,\chi,\kappa,\epsilon)}^{d}$ which is subjected to the bounds
$$ ||\omega_{\kappa}^{d}(\tau,m,\epsilon)||_{(\nu,\beta,\mu,\chi,\kappa,\epsilon)} \leq \varpi $$
for all $\epsilon \in D(0,\epsilon_{0}) \setminus \{ 0 \}$, for any directions $d \in \mathbb{R}$ chosen in such
a manner that the sector $S_{d}$ respects the constraints (\ref{root_cond_1}) and (\ref{root_cond_2}) listed above.
\end{prop}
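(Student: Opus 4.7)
The plan is to recast (\ref{main_conv_eq_omega_kappa}) as a fixed-point equation $\omega_\kappa = \mathcal{H}_\epsilon(\omega_\kappa)$ in the Banach space $F^{d}_{(\nu,\beta,\mu,\chi,\kappa,\epsilon)}$ and to apply the Banach contraction principle on a closed ball $\bar{B}(0,\varpi)$. First, I would combine the two purely multiplicative $\omega_\kappa$-contributions on each side, namely $Q(im)a_{0}\omega_\kappa$ and $R_{D}(im)(\kappa\tau^{\kappa})^{\delta_{D}}\omega_\kappa$, into the single term $P_{m}(\tau)\omega_\kappa$ with $P_m$ as in (\ref{factor_P_m}). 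Dividing both sides by $P_m(\tau)$ gives a nonlinear operator $\mathcal{H}_\epsilon$ whose multiplicative symbol $a(\tau,m)=1/P_m(\tau)$ obeys, via (\ref{low_bounds_P_m}), a bound of the form $|a(\tau,m)|\le M/[(1+|\tau|^{\kappa})^{\delta_D-1/\kappa}|R_D(im)|]$ once $r_{Q,R_D}$ is fixed large. This is precisely the form required to invoke Propositions 4 and 5 with parameter $\gamma_{1}=\delta_{D}-1/\kappa\ge 0$, nonnegativity being guaranteed by the second half of (\ref{constraint_kl_ml}).

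Next I would check that $\mathcal{H}_\epsilon$ maps $\bar{B}(0,\varpi)$ into itself. The forcing is handled in two pieces: the polynomial term $\sum_{j}B_{j}(m)\epsilon^{n_{j}-\alpha b_{j}}\tau^{b_{j}}/\Gamma(b_{j}/\kappa)$ belongs to the Banach space with norm controlled by Lemma 3, the exponent of $|\epsilon|$ being nonnegative by (\ref{constraint_chi_bj_ni_alpha}); the transcendental term $\Psi_{\kappa}$ has already been shown, in the paragraphs preceding the proposition, to lie in $F^{d}_{(\nu,\beta,\mu,\chi,\kappa,\epsilon)}$ with a bound uniform in small $\epsilon$ (under (\ref{cond_gamma_alpha_chi})). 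The remaining linear $\omega_\kappa$-dependent pieces (the $a_{l}$-convolutions, the lower-order parts of the $R_{D}$-expansion (\ref{expand_T_partial_T_delta_D_U}), and the terms coming from $R_{l}$ with $1\le l\le D-1$) are estimated through Proposition 4, the net exponent of $|\epsilon|$ being forced nonnegative by (\ref{constraint_kl_ml}) and (\ref{constraint_chi_dlkappa_deltal_Deltal_alpha_m0}). Here one must determine, for each term, whether $1+\gamma_{3}$ is nonpositive (so that (\ref{norm_estim_conv_op_1}) applies directly) or positive with $\gamma_{1}\ge 1+\gamma_{3}$ (so that (\ref{norm_estim_conv_op_2}) applies and an extra $|\epsilon|^{-\chi\kappa\gamma_{1}}$ must be absorbed by the coefficient). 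Finally, the bilinear $Q_{1}Q_{2}$ term is handled by Proposition 5, which introduces a factor $|\epsilon|^{-\chi}$; this is offset by the coefficient $\epsilon^{\mu_{l}-2m_{0}-\alpha h_{l}}$ together with the outer convolution in $h_{l}$, the net exponent of $|\epsilon|$ being made nonnegative by (\ref{constraint_chi_kappa_hl_mul_m0_alpha_deltaD}). Collecting all contributions produces a bound of the form $\|\mathcal{H}_\epsilon(\omega)\|\le \varrho_{0}+\varrho_{1}|\epsilon|^{\sigma_{1}}\varpi+\varrho_{2}|\epsilon|^{\sigma_{2}}\varpi^{2}$ with $\sigma_{1},\sigma_{2}>0$, where $\varrho_{0}$ can be kept small by taking $r_{Q,R_{D}}$ large (so that $C_{P}(r_{Q,R_{D}})^{1/(\delta_{D}\kappa)}$ is sizeable). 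An appropriate choice of $\varpi$ followed by shrinking of $\epsilon_{0}$ secures $\mathcal{H}_\epsilon(\bar{B}(0,\varpi))\subset\bar{B}(0,\varpi)$.

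The contractivity of $\mathcal{H}_\epsilon$ on $\bar{B}(0,\varpi)$ proceeds through the same estimates applied to the difference $\mathcal{H}_\epsilon(\omega_{1})-\mathcal{H}_\epsilon(\omega_{2})$; the bilinear piece is split via the polarization $\omega_{1}Q_{1}Q_{2}\omega_{1}-\omega_{2}Q_{1}Q_{2}\omega_{2}=(\omega_{1}-\omega_{2})Q_{1}Q_{2}\omega_{1}+\omega_{2}Q_{1}Q_{2}(\omega_{1}-\omega_{2})$, reducing the bookkeeping to the previous step. After a further shrinking of $\epsilon_{0}$ the Lipschitz constant drops below $1/2$, and the Banach fixed point theorem yields the unique solution $\omega_{\kappa}^{d}\in\bar{B}(0,\varpi)$. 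The principal obstacle I expect is bookkeeping rather than any new analytic idea: each of the many monomial contributions in (\ref{main_conv_eq_omega_kappa}) produces its own exponent of $|\epsilon|$ via either case (1) or case (2) of Proposition 4 (or Proposition 5), and each hypothesis among (\ref{constraint_kl_ml})--(\ref{constraint_chi_kappa_hl_mul_m0_alpha_deltaD}) has to be identified with the corresponding exponent. In particular the threshold $1+\gamma_{3}$ decides which branch of Proposition 4 is used, and hence whether an additional factor $|\epsilon|^{-\chi\kappa\gamma_{1}}$ enters the balance, and it is this branching that requires careful case analysis.
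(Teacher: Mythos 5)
Your proposal is correct and follows essentially the same route as the paper: isolate the two multiplicative $\omega_\kappa$-terms to form $P_m(\tau)\omega_\kappa$, divide by $P_m$ to obtain the contraction map $\mathcal{H}_\epsilon$, estimate each monomial via Lemma~2 and Propositions~4--5 using the lower bound (\ref{low_bounds_P_m}) (with $\gamma_1=\delta_D-1/\kappa\geq 0$), verify the self-mapping and $1/2$-Lipschitz properties by choosing $r_{Q,R_D}$ large and $\varpi$ appropriately, and invoke the Banach fixed-point theorem; your polarization trick for the bilinear piece and your identification of which branch of Proposition~4 each term requires match the paper's Lemma~3. The only slip is a label: the estimate for the $B_j(m)\tau^{b_j}$ forcing is Lemma~2, not "Lemma~3".
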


\begin{proof} We enter the proof with a lemma that focuses on some shrinking map upon the Banach spaces
mentioned above and scales down the main convolution problem to the construction of a fixed point for this map. 

\begin{lemma} Under the approval of the constraints (\ref{constraint_kl_ml}), (\ref{constraint_chi_bj_ni_alpha}), 
(\ref{constraint_chi_dlkappa_deltal_Deltal_alpha_m0}), (\ref{constraint_chi_kappa_hl_mul_m0_alpha_deltaD})
above, one can sort the constant $r_{Q,R_{D}}>0$
large enough and a constant $\varpi>0$ small enough such that for all
$\epsilon \in D(0,\epsilon_{0}) \setminus \{ 0 \}$, the map $\mathcal{H}_{\epsilon}$ defined as
\begin{multline}
\mathcal{H}_{\epsilon}(w(\tau,m)) :=  -\sum_{l=1}^{q}a_{l}\epsilon^{m_{l} - m_{0} - \alpha k_{l}}Q(im)
\frac{\tau^{\kappa}}{P_{m}(\tau)\Gamma(\frac{k_l}{\kappa})}
\int_{0}^{\tau^{\kappa}} (\tau^{\kappa}-s)^{\frac{k_l}{\kappa}-1}w(s^{1/\kappa},m) \frac{ds}{s}\\
-\sum_{l=0}^{M} c_{l}\epsilon^{\mu_{l} - 2m_{0} - \alpha h_{l}}
\frac{\tau^{\kappa}}{P_{m}(\tau)\Gamma(\frac{h_l}{\kappa})} \int_{0}^{\tau^{\kappa}}
(\tau^{\kappa} - s)^{\frac{h_l}{\kappa} - 1}\\
\times \left( s \int_{0}^{s} \frac{1}{(2\pi)^{1/2}}
\int_{-\infty}^{+\infty} Q_{1}(i(m-m_{1}))w((s-s')^{1/\kappa},m-m_{1}) \right. \\
\left. \times Q_{2}(im_{1})w((s')^{1/\kappa},m_{1}) dm_{1}\frac{1}{(s-s')s'} ds' \right)
\frac{ds}{s} + \sum_{j=0}^{Q} B_{j}(m) \epsilon^{n_{j} - \alpha b_{j}}
\frac{\tau^{b_j}}{P_{m}(\tau)\Gamma(\frac{b_j}{\kappa})} + \frac{\Psi_{\kappa}(\tau,m,\epsilon)}{P_{m}(\tau)}\\
+ R_{D}(im)\sum_{1 \leq p \leq \delta_{D}-1} A_{\delta_{D},p}
\frac{\tau^{\kappa}}{P_{m}(\tau)\Gamma( \delta_{D}-p )}
\int_{0}^{\tau^{\kappa}} (\tau^{\kappa} - s)^{\delta_{D}-p-1} (\kappa s)^{p}
w(s^{1/\kappa},m) \frac{ds}{s}\\
+ \sum_{l=1}^{D-1} \epsilon^{\Delta_{l} + \alpha(\delta_{l} - d_{l}) - m_{0}} R_{l}(im)\\
\times
\left( \frac{\tau^{\kappa}}{P_{m}(\tau)\Gamma( \frac{d_{l,\kappa}}{\kappa} )}
\int_{0}^{\tau^{\kappa}}(\tau^{\kappa} - s)^{\frac{d_{l,\kappa}}{\kappa} - 1}
(\kappa s)^{\delta_{l}} w(s^{1/\kappa},m) \frac{ds}{s} +
\sum_{1 \leq p \leq \delta_{l}-1} A_{\delta_{l},p} \right. \\
\left. \times 
\frac{\tau^{\kappa}}{P_{m}(\tau)\Gamma( \frac{d_{l,\kappa} + \kappa(\delta_{l}-p)}{\kappa} )}
\int_{0}^{\tau^{\kappa}} (\tau^{\kappa} - s)^{\frac{d_{l,\kappa} + \kappa(\delta_{l}-p)}{\kappa} - 1}
(\kappa s)^{p} w(s^{1/\kappa},m) \frac{ds}{s} \right)
\end{multline}
suffers the next properties.\\
{\bf i)} The following inclusion
\begin{equation}
\mathcal{H}_{\epsilon}(\bar{B}(0,\varpi)) \subset \bar{B}(0,\varpi) \label{H_ball_inside_ball}
\end{equation}
holds, where $\bar{B}(0,\varpi)$ is the closed ball centered at 0 with radius $\varpi$ in
$F_{(\nu,\beta,\mu,\chi,\kappa,\epsilon)}^{d}$, for all $\epsilon \in D(0,\epsilon_{0}) \setminus \{ 0 \}$.\\
{\bf ii)} We observe that
\begin{equation}
||\mathcal{H}_{\epsilon}(w_{1}) - \mathcal{H}_{\epsilon}(w_{2})||_{(\nu,\beta,\mu,\chi,\kappa,\epsilon)}
\leq \frac{1}{2}||w_{1} - w_{2}||_{(\nu,\beta,\mu,\chi,\kappa,\epsilon)} \label{H_shrink}
\end{equation}
for all $w_{1},w_{2} \in \bar{B}(0,\varpi)$, for all $\epsilon \in D(0,\epsilon_{0}) \setminus \{ 0 \}$.
\end{lemma}
\begin{proof} Firstly, we check the property (\ref{H_ball_inside_ball}). Let
$\epsilon \in D(0,\epsilon_{0}) \setminus \{0\}$ and consider
$w(\tau,m) \in F_{(\nu,\beta,\mu,\chi,\kappa,\epsilon)}^{d}$. We select $\varpi>0$ with
$||w(\tau,m)||_{(\nu,\beta,\mu,\chi,\kappa,\epsilon)} \leq \varpi$.\\
By taking a glance at Proposition 4 1), under (\ref{constraint_kl_ml}), we get a constant
$C_{2}>0$ (depending on $\nu,\kappa,Q,R_{D}$ and $k_{l}$ for $1 \leq l \leq q$) such that
\begin{multline}
|| \epsilon^{m_{l} - m_{0} - \alpha k_{l}}Q(im)
\frac{\tau^{\kappa}}{P_{m}(\tau)}
\int_{0}^{\tau^{\kappa}} (\tau^{\kappa}-s)^{\frac{k_l}{\kappa}-1}w(s^{1/\kappa},m) \frac{ds}{s}
||_{(\nu,\beta,\mu,\chi,\kappa,\epsilon)}\\
\leq \frac{C_{2}}{C_{P}(r_{Q,R_{D}})^{\frac{1}{\delta_{D}\kappa}}}
|\epsilon|^{\chi k_{l} + m_{l} - m_{0} - \alpha k_{l}}
||w(\tau,m)||_{(\nu,\beta,\mu,\chi,\kappa,\epsilon)}. \label{1_formula_H_epsilon_ball_in_ball}
\end{multline}
Due to Lemma 2, under (\ref{constraint_chi_bj_ni_alpha}), we get a constant $C_{1}>0$ (depending on 
$\nu,\kappa,\alpha$ and $n_{j},b_{j}$ for $0 \leq j \leq Q$) with
\begin{equation}
|| B_{j}(m) \epsilon^{n_{j} - \alpha b_{j}}
\frac{\tau^{b_j}}{P_{m}(\tau)} ||_{(\nu,\beta,\mu,\chi,\kappa,\epsilon)} \leq
\frac{C_1}{C_{P}(r_{Q,R_{D}})^{\frac{1}{\delta_{D}\kappa}}}
\frac{||B_{j}(m)||_{(\beta,\mu)}}{\mathrm{inf}_{m \in \mathbb{R}} |R_{D}(im)|}
|\epsilon|^{\chi b_{j} + n_{j} - \alpha b_{j}}. \label{2_formula_H_epsilon_ball_in_ball}
\end{equation}
From the estimates (\ref{maj_Psi_kappa_C_E}) and (\ref{maj_E_tau_epsilon_exp}), we get a constant
$C_{\Psi_{\kappa}}$ (depending on $||C_{F}(m)||_{(\beta,\mu)}$,$\kappa,K_{F},\\
\theta_{F},F_{1},F_{2}$) such that
\begin{equation}
|| \frac{\Psi_{\kappa}(\tau,m,\epsilon)}{P_{m}(\tau)} ||_{(\nu,\beta,\mu,\chi,\kappa,\epsilon)} 
\leq \frac{C_{\Psi_{\kappa}}}{C_{P}(r_{Q,R_{D}})^{\frac{1}{\delta_{D}\kappa}}
\mathrm{inf}_{m \in \mathbb{R}} |R_{D}(im)|} |\epsilon|^{n_{F}}. \label{3_formula_H_epsilon_ball_in_ball} 
\end{equation}
Using Proposition 4 2), we can take a constant $C_{2}'>0$ (depending on $\nu,\kappa$ and $\delta_{D}$) with
\begin{multline}
|| \frac{R_{D}(im)}{P_{m}(\tau)}\tau^{\kappa}
\int_{0}^{\tau^{\kappa}} (\tau^{\kappa} - s)^{\delta_{D}-p-1} s^{p-1}
w(s^{1/\kappa},m) ds ||_{(\nu,\beta,\mu,\chi,\kappa,\epsilon)} \\
\leq
\frac{C_{2}'}{C_{P}(r_{Q,R_{D}})^{\frac{1}{\delta_{D}\kappa}}} |\epsilon|^{\chi}
||w(\tau,m)||_{(\nu,\beta,\mu,\chi,\kappa,\epsilon)} \label{4_formula_H_epsilon_ball_in_ball}
\end{multline}
for $1 \leq p \leq \delta_{D}-1$ and in view of (\ref{constraint_chi_dlkappa_deltal_Deltal_alpha_m0}), we deduce
similarly a constant
$C_{2}'>0$ (depending on $\nu,\kappa,\delta_{D},R_{D}$ and $d_{l},\delta_{l},R_{l}$ for $1 \leq l \leq D-1$)
such that
\begin{multline}
||\epsilon^{\Delta_{l} + \alpha(\delta_{l} - d_{l}) - m_{0}} R_{l}(im)
\frac{\tau^{\kappa}}{P_{m}(\tau)}
\int_{0}^{\tau^{\kappa}}(\tau^{\kappa} - s)^{\frac{d_{l,\kappa}}{\kappa} - 1}
s^{\delta_{l}-1} w(s^{1/\kappa},m) ds||_{(\nu,\beta,\mu,\chi,\kappa,\epsilon)}\\
\leq \frac{C_{2}'}{C_{P}(r_{Q,R_{D}})^{\frac{1}{\delta_{D}\kappa}}}
|\epsilon|^{\chi \kappa( \frac{d_{l,\kappa}}{\kappa} + \delta_{l}) + \Delta_{l} +
\alpha(\delta_{l} - d_{l}) - m_{0} - \chi \kappa(\delta_{D} - \frac{1}{\kappa})}
||w(\tau,m)||_{(\nu,\beta,\mu,\chi,\kappa,\epsilon)} \label{5_formula_H_epsilon_ball_in_ball}
\end{multline}
and
\begin{multline}
||\epsilon^{\Delta_{l} + \alpha(\delta_{l} - d_{l}) - m_{0}} R_{l}(im) \frac{\tau^{\kappa}}{P_{m}(\tau)}
\int_{0}^{\tau^{\kappa}} (\tau^{\kappa} - s)^{\frac{d_{l,\kappa} + \kappa(\delta_{l}-p)}{\kappa} - 1}
s^{p-1} w(s^{1/\kappa},m) ds||_{(\nu,\beta,\mu,\chi,\kappa,\epsilon)}\\
\leq \frac{C_{2}'}{C_{P}(r_{Q,R_{D}})^{\frac{1}{\delta_{D}\kappa}}}
|\epsilon|^{\chi \kappa( \frac{d_{l,\kappa}}{\kappa} + \delta_{l}) + \Delta_{l} +
\alpha(\delta_{l} - d_{l}) - m_{0} - \chi \kappa(\delta_{D} - \frac{1}{\kappa})}
||w(\tau,m)||_{(\nu,\beta,\mu,\chi,\kappa,\epsilon)} \label{6_formula_H_epsilon_ball_in_ball}
\end{multline}
for all $1 \leq p \leq \delta_{l}-1$. Hereafter, we focus on estimates for the nonlinear part of
$\mathcal{H}_{\epsilon}$. Namely, we put
\begin{multline*}
h(\tau,m) = \tau^{\kappa-1}\frac{1}{R_{D}(im)}\int_{0}^{\tau^{\kappa}}
\int_{-\infty}^{+\infty}Q_{1}(i(m-m_{1}))
w((\tau^{\kappa} - s')^{1/\kappa},m-m_{1})\\
\times Q_{2}(im_{1})w((s')^{1/\kappa},m_{1})
\frac{1}{(\tau^{\kappa} - s')s'} ds'dm_{1}.
\end{multline*}
A glimpse into Proposition 5, allows us to catch a constant $C_{3}>0$ (depending on $\mu,\kappa,Q_{1},Q_{2},R_{D}$)
with
\begin{equation}
||h(\tau,m)||_{(\nu,\beta,\mu,\chi,\kappa,\epsilon)} \leq \frac{C_{3}}{|\epsilon|^{\chi}}
||w(\tau,m)||_{(\nu,\beta,\mu,\chi,\kappa,\epsilon)}^{2} \label{norm_h}
\end{equation}
On the other hand, bearing in mind Proposition 4 2), it boils down from
(\ref{constraint_chi_kappa_hl_mul_m0_alpha_deltaD}) that there exists a constant $C_{2}'>0$ (depending on
$\nu,\kappa,\delta_{D}$ and $h_{l}$ for $0 \leq l \leq M$)
with
\begin{multline}
||\epsilon^{\mu_{l} - 2m_{0} - \alpha h_{l}}
\frac{\tau^{\kappa}}{P_{m}(\tau)} R_{D}(im) \int_{0}^{\tau^{\kappa}}
(\tau^{\kappa} - s)^{\frac{h_l}{\kappa}-1} s^{\frac{1}{\kappa} - 1} h(s^{1/\kappa},m) ds
||_{(\nu,\beta,\mu,\chi,\kappa,\epsilon)}\\
\leq \frac{C_{2}'}{C_{P}(r_{Q,R_{D}})^{\frac{1}{\delta_{D}\kappa}}}
|\epsilon|^{\chi \kappa ( \frac{h_l}{\kappa} + \frac{1}{\kappa}) + \mu_{l} - 2m_{0} - \alpha h_{l}
- \chi \kappa (\delta_{D} - \frac{1}{\kappa})}
||h(\tau,m)||_{(\nu,\beta,\mu,\chi,\kappa,\epsilon)} \label{norm_conv_s_h_tau_m}
\end{multline}
Clustering (\ref{norm_h}) and (\ref{norm_conv_s_h_tau_m}) yields
\begin{multline}
|| \epsilon^{\mu_{l} - 2m_{0} - \alpha h_{l}}
\frac{\tau^{\kappa}}{P_{m}(\tau)} \int_{0}^{\tau^{\kappa}}
(\tau^{\kappa} - s)^{\frac{h_l}{\kappa} - 1}\\
\times \left( s \int_{0}^{s}
\int_{-\infty}^{+\infty} Q_{1}(i(m-m_{1}))w((s-s')^{1/\kappa},m-m_{1}) \right. \\
\left. \times Q_{2}(im_{1})w((s')^{1/\kappa},m_{1}) dm_{1} \frac{1}{(s-s')s'} ds' \right)
\frac{ds}{s}||_{(\nu,\beta,\mu,\chi,\kappa,\epsilon)} \\
\leq
\frac{C_{2}'C_{3}}{C_{P}(r_{Q,R_{D}})^{\frac{1}{\delta_{D}\kappa}}}
|\epsilon|^{\chi \kappa ( \frac{h_l}{\kappa} + \frac{1}{\kappa}) + \mu_{l} - 2m_{0} - \alpha h_{l}
- \chi \kappa (\delta_{D} - \frac{1}{\kappa}) - \chi}
||w(\tau,m)||_{(\nu,\beta,\mu,\chi,\kappa,\epsilon)}^{2} \label{7_formula_H_epsilon_ball_in_ball}
\end{multline}
Finally, we choose $r_{Q,R_{D}}>0$ and $\varpi>0$ in such a way that
\begin{multline}
\sum_{l=1}^{q} |a_{l}|\frac{C_{2}}{C_{P}(r_{Q,R_{D}})^{\frac{1}{\delta_{D}\kappa}}\Gamma(\frac{k_l}{\kappa})}
\epsilon_{0}^{\chi k_{l} + m_{l} - m_{0} - \alpha k_{l}}\varpi + \sum_{l=0}^{M} |c_{l}|
\frac{C_{2}'C_{3}}{C_{P}(r_{Q,R_{D}})^{\frac{1}{\delta_{D}\kappa}}\Gamma(\frac{h_l}{\kappa})(2\pi)^{1/2}}\\
\times
\epsilon_{0}^{\chi \kappa ( \frac{h_l}{\kappa} + \frac{1}{\kappa}) + \mu_{l} - 2m_{0} - \alpha h_{l}
- \chi \kappa (\delta_{D} - \frac{1}{\kappa}) - \chi}\varpi^{2}
+ \sum_{j=0}^{Q} \frac{C_1}{C_{P}(r_{Q,R_{D}})^{\frac{1}{\delta_{D}\kappa}}\Gamma(\frac{b_j}{\kappa})}\\
\times
\frac{||B_{j}(m)||_{(\beta,\mu)}}{\mathrm{inf}_{m \in \mathbb{R}} |R_{D}(im)|}
\epsilon_{0}^{\chi b_{j} + n_{j} - \alpha b_{j}} +
\frac{C_{\Psi_{\kappa}}}{C_{P}(r_{Q,R_{D}})^{\frac{1}{\delta_{D}\kappa}}
\mathrm{inf}_{m \in \mathbb{R}} |R_{D}(im)|} \epsilon_{0}^{n_{F}} +
\sum_{1 \leq p \leq \delta_{D}-1} |A_{\delta_{D},p}|\\
\times \frac{\kappa^{p}}{\Gamma(\delta_{D}-p)}
\frac{C_{2}'}{C_{P}(r_{Q,R_{D}})^{\frac{1}{\delta_{D}\kappa}}} \epsilon_{0}^{\chi} \varpi +
\sum_{l=1}^{D-1}
\frac{C_{2}' \kappa^{\delta_l} }{C_{P}(r_{Q,R_{D}})^{\frac{1}{\delta_{D}\kappa}}\Gamma(\frac{d_{l,\kappa}}{\kappa})}\\
\times
\epsilon_{0}^{\chi \kappa( \frac{d_{l,\kappa}}{\kappa} + \delta_{l}) + \Delta_{l} +
\alpha(\delta_{l} - d_{l}) - m_{0} - \chi \kappa(\delta_{D} - \frac{1}{\kappa})}\varpi
+ \sum_{1 \leq p \leq \delta_{l}-1} |A_{\delta_{l},p}|
\frac{\kappa^{p}}{\Gamma( \frac{d_{l,\kappa} + \kappa(\delta_{l}-p)}{\kappa} )}
\frac{C_{2}'}{C_{P}(r_{Q,R_{D}})^{\frac{1}{\delta_{D}\kappa}}}\\
\times
\epsilon_{0}^{\chi \kappa( \frac{d_{l,\kappa}}{\kappa} + \delta_{l}) + \Delta_{l} +
\alpha(\delta_{l} - d_{l}) - m_{0} - \chi \kappa(\delta_{D} - \frac{1}{\kappa})} \varpi \leq \varpi
\label{constraints_H_ball_in_ball}
\end{multline}
As an issue of the definition of $\mathcal{H}_{\epsilon}$, by collecting all the bounds
(\ref{1_formula_H_epsilon_ball_in_ball}), (\ref{2_formula_H_epsilon_ball_in_ball}),
(\ref{3_formula_H_epsilon_ball_in_ball}), (\ref{4_formula_H_epsilon_ball_in_ball}),
(\ref{5_formula_H_epsilon_ball_in_ball}), (\ref{6_formula_H_epsilon_ball_in_ball}),
(\ref{7_formula_H_epsilon_ball_in_ball}), we conclude that
\begin{equation}
||\mathcal{H}_{\epsilon}(w(\tau,m))||_{(\nu,\beta,\mu,\chi,\kappa,\epsilon)} \leq \varpi
\end{equation}
and the first claim (\ref{H_ball_inside_ball}) holds.\medskip

In a second part of the proof, we turn our effort to the verification of the affirmation (\ref{H_shrink}). Let
$w_{1}(\tau,m),w_{2}(\tau,m)$ belong to $F_{(\nu,\beta,\mu,\chi,\kappa,\epsilon)}^{d}$ with
$$ ||w_{1}(\tau,m)||_{(\nu,\beta,\mu,\chi,\kappa,\epsilon)} \leq \varpi \ \ , \ \
||w_{2}(\tau,m)||_{(\nu,\beta,\mu,\chi,\kappa,\epsilon)} \leq \varpi $$
Foremost, according to the
estimates (\ref{1_formula_H_epsilon_ball_in_ball}), we obtain a constant
$C_{2}>0$ (depending on $\nu,\kappa,Q,R_{D}$ and $k_{l}$ for $1 \leq l \leq q$) such that
\begin{multline}
|| \epsilon^{m_{l} - m_{0} - \alpha k_{l}}Q(im)
\frac{\tau^{\kappa}}{P_{m}(\tau)}
\int_{0}^{\tau^{\kappa}} (\tau^{\kappa}-s)^{\frac{k_l}{\kappa}-1}
(w_{1}(s^{1/\kappa},m) - w_{2}(s^{1/\kappa},m))  \frac{ds}{s}
||_{(\nu,\beta,\mu,\chi,\kappa,\epsilon)}\\
\leq \frac{C_{2}}{C_{P}(r_{Q,R_{D}})^{\frac{1}{\delta_{D}\kappa}}}
|\epsilon|^{\chi k_{l} + m_{l} - m_{0} - \alpha k_{l}}
||w_{1}(\tau,m) - w_{2}(\tau,m)||_{(\nu,\beta,\mu,\chi,\kappa,\epsilon)}. \label{1_formula_H_epsilon_shrink}
\end{multline} 
Likewise, in agreement with (\ref{4_formula_H_epsilon_ball_in_ball}),
(\ref{5_formula_H_epsilon_ball_in_ball}), (\ref{6_formula_H_epsilon_ball_in_ball}) we can select a constant
$C_{2}'>0$ (depending on $\nu,\kappa,\delta_{D},R_{D}$ and $d_{l},\delta_{l},R_{l}$
for $1 \leq l \leq D-1$) fulfilling
\begin{multline}
|| \frac{R_{D}(im)}{P_{m}(\tau)}\tau^{\kappa}
\int_{0}^{\tau^{\kappa}} (\tau^{\kappa} - s)^{\delta_{D}-p-1} s^{p-1}
(w_{1}(s^{1/\kappa},m) - w_{2}(s^{1/\kappa},m) ds ||_{(\nu,\beta,\mu,\chi,\kappa,\epsilon)} \\
\leq
\frac{C_{2}'}{C_{P}(r_{Q,R_{D}})^{\frac{1}{\delta_{D}\kappa}}} |\epsilon|^{\chi}
||w_{1}(\tau,m) - w_{2}(\tau,m)||_{(\nu,\beta,\mu,\chi,\kappa,\epsilon)} \label{2_formula_H_epsilon_shrink}
\end{multline}
for $1 \leq p \leq \delta_{D}-1$, together with
\begin{multline}
||\epsilon^{\Delta_{l} + \alpha(\delta_{l} - d_{l}) - m_{0}} R_{l}(im)
\frac{\tau^{\kappa}}{P_{m}(\tau)}
\int_{0}^{\tau^{\kappa}}(\tau^{\kappa} - s)^{\frac{d_{l,\kappa}}{\kappa} - 1}
s^{\delta_{l}-1}\\
\times (w_{1}(s^{1/\kappa},m) - w_{2}(s^{1/\kappa},m)) ds||_{(\nu,\beta,\mu,\chi,\kappa,\epsilon)}\\
\leq \frac{C_{2}'}{C_{P}(r_{Q,R_{D}})^{\frac{1}{\delta_{D}\kappa}}}
|\epsilon|^{\chi \kappa( \frac{d_{l,\kappa}}{\kappa} + \delta_{l}) + \Delta_{l} +
\alpha(\delta_{l} - d_{l}) - m_{0} - \chi \kappa(\delta_{D} - \frac{1}{\kappa})}
||w_{1}(\tau,m) - w_{2}(\tau,m)||_{(\nu,\beta,\mu,\chi,\kappa,\epsilon)} \label{3_formula_H_epsilon_shrink}
\end{multline}
and
\begin{multline}
||\epsilon^{\Delta_{l} + \alpha(\delta_{l} - d_{l}) - m_{0}} R_{l}(im) \frac{\tau^{\kappa}}{P_{m}(\tau)}
\int_{0}^{\tau^{\kappa}} (\tau^{\kappa} - s)^{\frac{d_{l,\kappa} + \kappa(\delta_{l}-p)}{\kappa} - 1}
s^{p-1}\\
\times (w_{1}(s^{1/\kappa},m) - w_{2}(s^{1/\kappa},m)) ds||_{(\nu,\beta,\mu,\chi,\kappa,\epsilon)}\\
\leq \frac{C_{2}'}{C_{P}(r_{Q,R_{D}})^{\frac{1}{\delta_{D}\kappa}}}
|\epsilon|^{\chi \kappa( \frac{d_{l,\kappa}}{\kappa} + \delta_{l}) + \Delta_{l} +
\alpha(\delta_{l} - d_{l}) - m_{0} - \chi \kappa(\delta_{D} - \frac{1}{\kappa})}
||w_{1}(\tau,m)-w_{2}(\tau,m)||_{(\nu,\beta,\mu,\chi,\kappa,\epsilon)} \label{4_formula_H_epsilon_shrink}
\end{multline}
for all $1 \leq p \leq \delta_{l}-1$. We turn now to the nonlinear part of $\mathcal{H}_{\epsilon}$. As a
groundwork, let us rewrite
\begin{multline}
Q_{1}(i(m-m_{1}))w_{1}((\tau^{\kappa}-s')^{1/\kappa},m-m_{1})Q_{2}(im_{1})w_{1}((s')^{1/\kappa},m_{1})\\
- Q_{1}(i(m-m_{1}))w_{2}((\tau^{\kappa}-s')^{1/\kappa},m-m_{1})Q_{2}(im_{1})w_{2}((s')^{1/\kappa},m_{1})\\
= Q_{1}(i(m-m_{1}))
\left( w_{1}((\tau^{\kappa}-s')^{1/\kappa},m-m_{1}) - w_{2}((\tau^{\kappa}-s')^{1/\kappa},m-m_{1}) \right)\\
\times
Q_{2}(im_{1})w_{1}((s')^{1/\kappa},m_{1}) + Q_{1}(i(m-m_{1}))w_{2}((\tau^{\kappa}-s')^{1/\kappa},m-m_{1})
Q_{2}(im_{1})\\
\times \left( w_{1}((s')^{1/\kappa},m_{1}) - w_{2}((s')^{1/\kappa},m_{1}) \right)
\label{factor_Q1w1Q2w1_minus_Q1w2Q2w2}
\end{multline}
For $j=1,2$, we put
\begin{multline*}
h_{j}(\tau,m) = \frac{\tau^{\kappa-1}}{R_{D}(im)} \int_{0}^{\tau^{\kappa}}
\int_{-\infty}^{+\infty} Q_{1}(i(m-m_{1}))w_{j}((\tau^{\kappa} - s')^{1/\kappa},m-m_{1})\\
\times
Q_{2}(im_{1})w_{j}((s')^{1/\kappa},m_{1})\frac{1}{(\tau^{\kappa}-s')s'} ds' dm_{1}
\end{multline*}
Focusing both on the factorization (\ref{factor_Q1w1Q2w1_minus_Q1w2Q2w2}) and Proposition 5, we can find
a constant $C_{3}>0$ (depending on $\mu,\kappa,Q_{1},Q_{2},R_{D}$) with
\begin{multline}
||h_{1}(\tau,m) - h_{2}(\tau,m)||_{(\nu,\beta,\mu,\chi,\kappa,\epsilon)} \leq
\frac{C_3}{|\epsilon|^{\chi}}( ||w_{1}(\tau,m)||_{(\nu,\beta,\mu,\chi,\kappa,\epsilon)} +
||w_{2}(\tau,m)||_{(\nu,\beta,\mu,\chi,\kappa,\epsilon)} )\\
\times ||w_{1}(\tau,m) - w_{2}(\tau,m)||_{(\nu,\beta,\mu,\chi,\kappa,\epsilon)} \label{norm_h1_minus_h2}
\end{multline}
From (\ref{norm_conv_s_h_tau_m}) together with (\ref{norm_h1_minus_h2}), we can pick up a constant $C_{2}'>0$
(depending on $\nu,\kappa,\delta_{D}$ and $h_{l}$ for $0 \leq l \leq M$) with
\begin{multline}
||\epsilon^{\mu_{l} - 2m_{0} - \alpha h_{l}}
\frac{\tau^{\kappa}}{P_{m}(\tau)} R_{D}(im) \int_{0}^{\tau^{\kappa}}
(\tau^{\kappa} - s)^{\frac{h_l}{\kappa}-1} s^{\frac{1}{\kappa} - 1}
(h_{1}(s^{1/\kappa},m) - h_{2}(s^{1/\kappa},m)) ds
||_{(\nu,\beta,\mu,\chi,\kappa,\epsilon)}\\
\leq \frac{C_{2}'}{C_{P}(r_{Q,R_{D}})^{\frac{1}{\delta_{D}\kappa}}}
|\epsilon|^{\chi \kappa ( \frac{h_l}{\kappa} + \frac{1}{\kappa}) + \mu_{l} - 2m_{0} - \alpha h_{l}
- \chi \kappa (\delta_{D} - \frac{1}{\kappa})}
||h_{1}(\tau,m) - h_{2}(\tau,m)||_{(\nu,\beta,\mu,\chi,\kappa,\epsilon)}\\
\leq \frac{C_{2}'C_{3}}{C_{P}(r_{Q,R_{D}})^{\frac{1}{\delta_{D}\kappa}}}
|\epsilon|^{\chi \kappa ( \frac{h_l}{\kappa} + \frac{1}{\kappa}) + \mu_{l} - 2m_{0} - \alpha h_{l}
- \chi \kappa (\delta_{D} - \frac{1}{\kappa}) - \chi}\\
\times ( ||w_{1}(\tau,m)||_{(\nu,\beta,\mu,\chi,\kappa,\epsilon)} +
||w_{2}(\tau,m)||_{(\nu,\beta,\mu,\chi,\kappa,\epsilon)} )
||w_{1}(\tau,m) - w_{2}(\tau,m)||_{(\nu,\beta,\mu,\chi,\kappa,\epsilon)} \label{5_formula_H_epsilon_shrink}
\end{multline}
As a result, we choose $r_{Q,R_{D}}>0$ and $\varpi>0$ obeying the next inequality
\begin{multline}
\sum_{l=1}^{q} |a_{l}|\frac{C_{2}}{C_{P}(r_{Q,R_{D}})^{\frac{1}{\delta_{D}\kappa}}\Gamma(\frac{k_l}{\kappa})}
\epsilon_{0}^{\chi k_{l} + m_{l} - m_{0} - \alpha k_{l}} + \sum_{l=0}^{M} |c_{l}|
\frac{C_{2}'C_{3}}{C_{P}(r_{Q,R_{D}})^{\frac{1}{\delta_{D}\kappa}}\Gamma(\frac{h_l}{\kappa})(2\pi)^{1/2}}\\
\times
\epsilon_{0}^{\chi \kappa ( \frac{h_l}{\kappa} + \frac{1}{\kappa}) + \mu_{l} - 2m_{0} - \alpha h_{l}
- \chi \kappa (\delta_{D} - \frac{1}{\kappa}) - \chi} 2\varpi
+ \sum_{1 \leq p \leq \delta_{D}-1} |A_{\delta_{D},p}|\\
\times \frac{\kappa^{p}}{\Gamma(\delta_{D}-p)}
\frac{C_{2}'}{C_{P}(r_{Q,R_{D}})^{\frac{1}{\delta_{D}\kappa}}} \epsilon_{0}^{\chi} +
\sum_{l=1}^{D-1}
\frac{C_{2}' \kappa^{\delta_l} }{C_{P}(r_{Q,R_{D}})^{\frac{1}{\delta_{D}\kappa}}\Gamma(\frac{d_{l,\kappa}}{\kappa})}\\
\times
\epsilon_{0}^{\chi \kappa( \frac{d_{l,\kappa}}{\kappa} + \delta_{l}) + \Delta_{l} +
\alpha(\delta_{l} - d_{l}) - m_{0} - \chi \kappa(\delta_{D} - \frac{1}{\kappa})}
+ \sum_{1 \leq p \leq \delta_{l}-1} |A_{\delta_{l},p}|
\frac{\kappa^{p}}{\Gamma( \frac{d_{l,\kappa} + \kappa(\delta_{l}-p)}{\kappa} )}
\frac{C_{2}'}{C_{P}(r_{Q,R_{D}})^{\frac{1}{\delta_{D}\kappa}}}\\
\times
\epsilon_{0}^{\chi \kappa( \frac{d_{l,\kappa}}{\kappa} + \delta_{l}) + \Delta_{l} +
\alpha(\delta_{l} - d_{l}) - m_{0} - \chi \kappa(\delta_{D} - \frac{1}{\kappa})} \leq \frac{1}{2}
\label{constraints_H_shrink}
\end{multline}
By assembling all the bounds (\ref{1_formula_H_epsilon_shrink}),  (\ref{2_formula_H_epsilon_shrink}),
(\ref{3_formula_H_epsilon_shrink}), (\ref{4_formula_H_epsilon_shrink}), (\ref{5_formula_H_epsilon_shrink}), we
attain the foreseen estimates (\ref{H_shrink}).\medskip

At the very end of the proof, we now take for granted that the two conditions
(\ref{constraints_H_ball_in_ball}) and (\ref{constraints_H_shrink}) hold conjointly
for the radii $r_{Q,R_{D}}$ and $\varpi$. Then both (\ref{H_ball_inside_ball}) and
(\ref{H_shrink}) hold at the same time and the Lemma 3 is shown.
\end{proof}
We consider the ball $\bar{B}(0,\varpi) \subset F_{(\nu,\beta,\mu,\chi,\kappa,\epsilon)}^{d}$ just
built above in Lemma 3 which is actually a complete
metric space equipped with the norm $||.||_{(\nu,\beta,\mu,\chi,\kappa,\epsilon)}$. From the lemma above, we get
that $\mathcal{H}_{\epsilon}$ is a
contractive application from $\bar{B}(0,\varpi)$ into itself. Due to the classical contractive mapping theorem, we
deduce that
the map $\mathcal{H}_{\epsilon}$ has a unique fixed point denoted $\omega_{\kappa}^{d}(\tau,m,\epsilon)$
in the ball $\bar{B}(0,\varpi)$, meaning that
\begin{equation}
\mathcal{H}_{\epsilon}(\omega_{\kappa}^{d}(\tau,m,\epsilon)) = \omega_{\kappa}^{d}(\tau,m,\epsilon)
\label{H_epsilon_fix_point_eq}
\end{equation}
for a unique $\omega_{\kappa}^{d}(\tau,m,\epsilon) \in F_{(\nu,\beta,\mu,\chi,\kappa,\epsilon)}^{d}$ such
that  
$||\omega_{\kappa}^{d}(\tau,m,\epsilon)||_{(\nu,\beta,\mu,\chi,\kappa,\epsilon)} \leq \varpi$,
for all $\epsilon \in D(0,\epsilon_{0}) \setminus \{ 0 \}$. Moreover, the function
$\omega_{\kappa}^{d}(\tau,m,\epsilon)$ depends holomorphically on $\epsilon$ in
$D(0,\epsilon_{0}) \setminus \{ 0 \}$.

Now, if one sets apart the terms $Q(im)a_{0}\omega_{\kappa}(\tau,m,\epsilon)$ in the left handside of
(\ref{main_conv_eq_omega_kappa}) and \\
$R_{D}(im)(\kappa \tau^{\kappa})^{\delta_D}\omega_{\kappa}(\tau,m,\epsilon)$
in the right handside of (\ref{main_conv_eq_omega_kappa}), we recognize by dividing with the 
polynomial $P_{m}(\tau)$ given in (\ref{factor_P_m}) that (\ref{main_conv_eq_omega_kappa}) can be exactly
rewritten as the equation (\ref{H_epsilon_fix_point_eq}) above. Therefore, the unique fixed point
$\omega_{\kappa}^{d}(\tau,m,\epsilon)$ of $\mathcal{H}_{\epsilon}$ in $\bar{B}(0,\varpi)$ precisely solves the
problem (\ref{main_conv_eq_omega_kappa}). This yields the proposition.
\end{proof}

\subsection{Analytic solutions to the main problem on boundary layers $\epsilon-$depending domains in time near
the origin}

We return to the formal construction of time rescaled solutions to the main equation (\ref{main_PDE_u}) under
the new insight on the main associated convolution equation (\ref{main_conv_eq_omega_kappa})
reached in the previous subsection.\medskip

\noindent We first recall the definitions of a good covering as introduced in \cite{lama1}.

\begin{defin} Let $\varsigma \geq 2$ be an integer. For all $0 \leq p \leq \varsigma-1$, we consider open sectors
$\mathcal{E}_{p}$ centered at $0$, with radius $\epsilon_{0}>0$ and opening
$\frac{\pi}{\chi \kappa} + \xi_{p} < 2\pi$ with $\xi_{p}>0$ small enough such that
$\mathcal{E}_{p} \cap \mathcal{E}_{p+1} \neq \emptyset$, for all
$0 \leq p \leq \varsigma-1$ (with the convention that $\mathcal{E}_{\varsigma} = \mathcal{E}_{0})$. Moreover, we
assume that
the intersection of any three different elements in $\{ \mathcal{E}_{p} \}_{0 \leq p \leq \varsigma-1}$ is empty
and that
$\cup_{p=0}^{\varsigma - 1} \mathcal{E}_{p} = \mathcal{U} \setminus \{ 0 \}$,
where $\mathcal{U}$ is some neighborhood of 0 in $\mathbb{C}$. Such a set of sectors
$\{ \mathcal{E}_{p} \}_{0 \leq p \leq \varsigma - 1}$ is called a good covering in $\mathbb{C}^{\ast}$ with aperture
$\frac{\pi}{\chi \kappa}$.
\end{defin}

\noindent We now give a definition for a set of $\epsilon-$depending sectors associated to a good covering.

\begin{defin} Let $\{ \mathcal{E}_{p} \}_{0 \leq p \leq \varsigma - 1}$ be a good covering with aperture
$\frac{\pi}{\chi \kappa}$. Let $\alpha \in \mathbb{Q}$ with $\alpha < \chi$. We choose a fixed open sector
$X$ centered at 0 with radius $\varrho_{X}>0$ and for each
$\epsilon \in D(0,\epsilon_{0}) \setminus \{ 0 \}$, we define the sector
$$ \mathcal{T}_{\epsilon,\chi - \alpha} =
\{ x\epsilon^{\chi - \alpha} / x \in X \} $$
with radius $\varrho_{X}|\epsilon|^{\chi - \alpha}$. For each $\epsilon \in
D(0,\epsilon_{0}) \setminus \{ 0 \}$, we consider also
a family of sectors
$$ S_{\mathfrak{d}_{p},\theta,\varrho_{X}|\epsilon|^{\chi}} =
\{ T \in \mathbb{C}^{\ast} / |T| \leq \varrho_{X} |\epsilon|^{\chi} \ \ , \ \
|\mathfrak{d}_{p} - \mathrm{arg}(T)| < \frac{\theta}{2} \} $$
for some aperture $\theta > \frac{\pi}{\kappa}$, where $\mathfrak{d}_{p} \in \mathbb{R}$, for
$0 \leq p \leq \varsigma - 1$
are directions which satisfy the next constraints described below.\\
Let $q_{l}(m)$ be the roots of $P_{m}(\tau)$ defined in (\ref{defin_roots}) and $S_{\mathfrak{d}_{p}}$,
$0 \leq p \leq \varsigma - 1$ be unbounded sectors centered at 0 with direction $\mathfrak{d}_{p}$ and small
aperture. We assume
that\\
1) There exists a constant $M_{1}>0$ such that
\begin{equation}
|\tau - q_{l}(m)| \geq M_{1}(1 + |\tau|) \label{root_cond_1_in_defin}
\end{equation}
for all $0 \leq l \leq \delta_{D}\kappa-1$, all $m \in \mathbb{R}$, all
$\tau \in S_{\mathfrak{d}_p} \cup \bar{D}(0,\rho)$, for all
$0 \leq p \leq \varsigma-1$.\\
2) There exists a constant $M_{2}>0$ such that
\begin{equation}
|\tau - q_{l_0}(m)| \geq M_{2}|q_{l_0}(m)| \label{root_cond_2_in_defin}
\end{equation}
for some $l_{0} \in \{0,\ldots,\delta_{D}\kappa-1 \}$, all $m \in \mathbb{R}$, all
$\tau \in S_{\mathfrak{d}_p} \cup \bar{D}(0,\rho)$, for
all $0 \leq p \leq \varsigma - 1$.\\
3) For all $0 \leq p \leq \varsigma - 1$, all
$\epsilon \in \mathcal{E}_{p}$ and $t \in \mathcal{T}_{\epsilon,\chi - \alpha}$, we have that
$$ \epsilon^{\alpha}t \in S_{\mathfrak{d}_{p},\theta,\varrho_{X}|\epsilon|^{\chi}}.$$

\noindent We say that the family of sectors $\{
(S_{\mathfrak{d}_{p},\theta,\varrho_{X}|\epsilon|^{\chi}})_{0 \leq p \leq \varsigma - 1},
\mathcal{T}_{\epsilon,\chi - \alpha} \} $ is associated to the good covering
$\{ \mathcal{E}_{p} \}_{0 \leq p \leq \varsigma - 1}$.
\end{defin}

In the next main first outcome, we build a family of actual holomorphic solutions to the principal
equation (\ref{main_PDE_u}) that we call \emph{inner solutions}. These solutions
are defined on the sectors $\mathcal{E}_{p}$ of a good covering w.r.t $\epsilon$, on $\epsilon-$depending
associated sectors $\mathcal{T}_{\epsilon,\chi - \alpha}$ w.r.t $t$ and on some horizontal strip $H_{\beta}$
w.r.t $z$. Furthermore, we can oversee the difference between any two neighboring solutions on the
intersections $\mathcal{E}_{p} \cap \mathcal{E}_{p+1}$ and ascertain that it is exponentially flat of order
at most $\chi \kappa$ w.r.t $\epsilon$.

\begin{theo} We look at the singularly perturbed PDE (\ref{main_PDE_u}) and we assume that all the prior
constraints (\ref{constraints_degree_coeff_Q_Rl}), (\ref{constraint_m0_ml}), (\ref{defin_b_j}),
(\ref{defin_F_tzepsilon}), (\ref{defin_omega_F}), (\ref{cond_deltaD_alpha}), (\ref{cond_deltal_alpha}),
(\ref{quotient_Q_RD_in_S}), (\ref{cond_gamma_alpha_chi}), (\ref{constraint_kl_ml}),
(\ref{constraint_chi_bj_ni_alpha}), (\ref{constraint_chi_dlkappa_deltal_Deltal_alpha_m0}) and
(\ref{constraint_chi_kappa_hl_mul_m0_alpha_deltaD}) hold. Let $\{ \mathcal{E}_{p} \}_{0 \leq p \leq \varsigma-1}$
a good covering in $\mathbb{C}^{\ast}$ with aperture $\frac{\pi}{\chi \kappa}$ be given, for which a 
family of open sectors $\{ (S_{\mathfrak{d}_{p},\theta,\varrho_{X}|\epsilon|^{\chi}})_{0 \leq p \leq \varsigma-1},
\mathcal{T}_{\epsilon,\chi - \alpha} \}$ associated to this good covering can be distinguished.

Then, there exist a radius $r_{Q,R_{D}}>0$ large enough and $\epsilon_{0}>0$ small enough, for which
a family $\{ u^{\mathfrak{d}_{p}}(t,z,\epsilon) \}_{0 \leq p \leq \varsigma - 1}$ of actual solutions
of (\ref{main_PDE_u}) are built up. Furthermore, for each $\epsilon \in \mathcal{E}_{p}$, the
function $(t,z) \mapsto u^{\mathfrak{d}_{p}}(t,z,\epsilon)$ defines a bounded holomorphic function
on $\mathcal{T}_{\epsilon,\chi - \alpha} \times H_{\beta'}$ for any given $0 < \beta' < \beta$,
for all $0 \leq p \leq \varsigma - 1$. Moreover, the functions
$(x,z,\epsilon) \mapsto \epsilon^{m_{0}}u^{\mathfrak{d}_{p}}(x \epsilon^{\chi - \alpha},z,\epsilon)$
are bounded and holomorphic on $X \times H_{\beta'} \times \mathcal{E}_{p}$ for any
given $0 < \beta' < \beta$, $0 \leq p \leq \varsigma -1$ and are submitted to the next bounds : there exist
constants $K_{p},M_{p}>0$ and $\sigma>0$ (independent of $\epsilon$) such that
\begin{equation}
\sup_{x \in X \cap D(0,\sigma),z \in H_{\beta'}} | \epsilon^{m_0}u^{\mathfrak{d}_{p+1}}(x \epsilon^{\chi - \alpha},z,\epsilon) -
\epsilon^{m_0}u^{\mathfrak{d}_{p}}(x \epsilon^{\chi - \alpha},z,\epsilon) | \leq
K_{p}\exp( -\frac{M_p}{|\epsilon|^{\chi \kappa}} ) \label{difference_u_dp_exp_small_epsilon}
\end{equation}
for all $\epsilon \in \mathcal{E}_{p+1} \cap \mathcal{E}_{p}$, all $0 \leq p \leq \varsigma - 1$
(where by convention $u^{\mathfrak{d}_{\varsigma}} = u^{\mathfrak{d}_{0}}$).
\end{theo}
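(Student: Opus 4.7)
The plan is to reverse the Borel--Laplace--Fourier procedure of Subsection 3.3: first produce Borel-plane solutions $\omega_\kappa^{\mathfrak{d}_p}$ via Proposition 6, then define the sectorial solutions $u^{\mathfrak{d}_p}$ by a Laplace transform of order $\kappa$ composed with an inverse Fourier transform, and finally extract the exponential flatness (\ref{difference_u_dp_exp_small_epsilon}) from a Cauchy contour deformation in the Borel variable.

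\emph{Construction and verification.} For each $\mathfrak{d}_p$, the pair $(S_{\mathfrak{d}_p},\bar{D}(0,\rho))$ satisfies (\ref{root_cond_1_in_defin})--(\ref{root_cond_2_in_defin}) through Definition 5, so Proposition 6 supplies a unique $\omega_\kappa^{\mathfrak{d}_p}(\tau,m,\epsilon)\in F^{\mathfrak{d}_p}_{(\nu,\beta,\mu,\chi,\kappa,\epsilon)}$ of norm at most $\varpi$, holomorphic in $\epsilon\in D(0,\epsilon_0)\setminus\{0\}$, solving (\ref{main_conv_eq_omega_kappa}). I then set
$$
u^{\mathfrak{d}_p}(t,z,\epsilon) := \epsilon^{-m_0}\,\frac{\kappa}{(2\pi)^{1/2}}\int_{-\infty}^{+\infty}\int_{L_{\mathfrak{d}_p}}\omega_\kappa^{\mathfrak{d}_p}(u,m,\epsilon)\,\exp\!\Bigl(-\bigl(\tfrac{u}{\epsilon^{\alpha}t}\bigr)^\kappa\Bigr)\,e^{izm}\,\frac{du}{u}\,dm.
$$
Condition 3) of Definition 5 guarantees $\cos(\kappa\arg(u/\epsilon^\alpha t))\ge \Delta>0$ along $L_{\mathfrak{d}_p}$, so that the Banach-norm bound on $\omega_\kappa^{\mathfrak{d}_p}$ combined with the Fourier weight $e^{-\beta|m|}(1+|m|)^{-\mu}$ makes the double integral absolutely convergent and $u^{\mathfrak{d}_p}$ bounded holomorphic on $\mathcal{T}_{\epsilon,\chi-\alpha}\times H_{\beta'}$. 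The substitution $u=\epsilon^\chi v$ together with $t=x\epsilon^{\chi-\alpha}$ turns the integrand into a function holomorphic in $\epsilon\in \mathcal{E}_p$, delivering the holomorphy of $\epsilon^{m_0}u^{\mathfrak{d}_p}(x\epsilon^{\chi-\alpha},z,\epsilon)$ on $X\times H_{\beta'}\times \mathcal{E}_p$. That $u^{\mathfrak{d}_p}$ indeed solves (\ref{main_PDE_u}) is then a bookkeeping step: the identities of Proposition 3 (Laplace) and (\ref{dz_fourier})--(\ref{prod_fourier}) (Fourier) convert (\ref{main_conv_eq_omega_kappa}) back into (\ref{main_PDE_U}) for the Laplace--Fourier reconstruction $U^{\mathfrak{d}_p}(T,z,\epsilon)$, and the rescaling $T=\epsilon^\alpha t$ with the prefactor $\epsilon^{-m_0}$ recovers (\ref{main_PDE_u}).

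\emph{The main obstacle: exponential flatness.} Applying Proposition 6 separately with $S_d=S_{\mathfrak{d}_p}$ and $S_d=S_{\mathfrak{d}_{p+1}}$, the restrictions of the two Borel solutions to the common disc $\bar{D}(0,\rho)$ both satisfy the same fixed-point equation in the same Banach space and hence coincide by uniqueness; they therefore glue into one holomorphic function $\omega_\kappa$ on $S_{\mathfrak{d}_p}\cup \bar{D}(0,\rho)\cup S_{\mathfrak{d}_{p+1}}$. Fix $\epsilon\in \mathcal{E}_p\cap \mathcal{E}_{p+1}$ and $t=x\epsilon^{\chi-\alpha}$ with $x\in X\cap D(0,\sigma)$; splitting each Laplace integral at $|u|=\rho/2$ and invoking Cauchy's theorem (the two inner truncations, computed from the common Taylor development of $\omega_\kappa$ on $\bar{D}(0,\rho)$, are equal and cancel) yields
$$
\epsilon^{m_0}\bigl(u^{\mathfrak{d}_{p+1}}-u^{\mathfrak{d}_p}\bigr) = \frac{\kappa}{(2\pi)^{1/2}}\!\int_{-\infty}^{+\infty}\!\Bigl(\int_{L_{\mathfrak{d}_{p+1}}^{\ge}}-\int_{L_{\mathfrak{d}_p}^{\ge}}+\int_{C_\rho}\Bigr)\omega_\kappa\,e^{-(u/\epsilon^\alpha t)^\kappa}\,e^{izm}\,\tfrac{du}{u}\,dm,
$$
where $L^{\ge}$ denotes the tail $\{|u|\ge \rho/2\}$ and $C_\rho$ the arc of radius $\rho/2$ joining the two rays. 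The norm bound yields $|\omega_\kappa|\lesssim |u/\epsilon^\chi|\,e^{\nu|u/\epsilon^\chi|^\kappa}(1+|m|)^{-\mu}e^{-\beta|m|}$, while $|t|\le \varrho_X|\epsilon|^{\chi-\alpha}$ forces $|e^{-(u/\epsilon^\alpha t)^\kappa}|\le \exp(-\Delta_0|u|^\kappa/(\varrho_X|\epsilon|^\chi)^\kappa)$. The delicate point, and the main technical difficulty of the whole result, is the competition between these two exponentials: choosing $\varrho_X$ (hence the radius of the fixed sector $X$ and the threshold $\sigma$) small enough so that $a:=\Delta_0\varrho_X^{-\kappa}-\nu>0$, the three contour pieces are uniformly majorized by $\exp(-a|u|^\kappa/|\epsilon|^{\chi\kappa})$, each integral is bounded by a constant times $\exp(-a(\rho/2)^\kappa/|\epsilon|^{\chi\kappa})$, and integration in $m$ against the convergent Fourier weight gives (\ref{difference_u_dp_exp_small_epsilon}) with $M_p=a(\rho/2)^\kappa$ and a suitable $K_p$.
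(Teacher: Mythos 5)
Your construction follows the same route as the paper: Proposition~6 furnishes the Borel-plane solutions $\omega_\kappa^{\mathfrak{d}_p}$, the $m_\kappa$-Laplace and inverse Fourier transforms reconstruct $u^{\mathfrak{d}_p}$, and the cocycle estimate comes from splitting the two Laplace integrals at $|u|=\rho/2$ and appealing to Cauchy's theorem on $D(0,\rho)$, exactly as in the paper's proof of (\ref{difference_u_dp_exp_small_epsilon}). One small correction: the two inner truncations along $L_{\mathfrak{d}_p}$ and $L_{\mathfrak{d}_{p+1}}$ are \emph{not} equal and do not cancel (they run along different rays); rather, Cauchy's theorem for the closed contour (ray in, arc of radius $\rho/2$, ray out) in $D(0,\rho)$ shows that their difference equals the arc integral $\int_{C_\rho}$, which is precisely the third term in your decomposition, so the final estimate is unaffected.
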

\begin{proof} As shown above in Section 3.4, the series
$$ \Psi_{\kappa}(\tau,m,\epsilon) = \sum_{n \geq 1} \psi_{n}(m,\epsilon)
\frac{\tau^{n}}{\Gamma(n/\kappa)} \in E_{(\beta,\mu)}[[ \tau ]] $$
is convergent for all $\tau$ in $\mathbb{C}$, for all $\epsilon \in D(0,\epsilon_{0}) \setminus \{ 0 \}$.
Moreover, it is subjected to the next bounds
$$ |\Psi_{\kappa}(\tau,m,\epsilon)| \leq \Psi |\epsilon|^{n_F} (1 + |m|)^{-\mu} \exp( -\beta |m|)
\frac{|\frac{\tau}{\epsilon^{\chi}}|}{1 + |\frac{\tau}{\epsilon^{\chi}}|^{2\kappa}}
\exp( \nu |\frac{\tau}{\epsilon^{\chi}}|^{\kappa} )$$
for some constant $\Psi>0$ (independent of $\epsilon$), for all $\tau \in \mathbb{C}$, all
$\epsilon \in D(0,\epsilon_{0}) \setminus \{ 0 \}$. We deduce that the formal series
$$ \Phi_{\kappa}(T,m,\epsilon) = \sum_{n \geq 1} \psi_{n}(m,\epsilon) T^{n} \in E_{(\beta,\mu)}[[T]] $$
is $m_{\kappa}-$summable in all directions $d \in \mathbb{R}$ according to Definition 2 and hence defines a convergent series near $T=0$.
In order to get its radius of convergence, we can express it as a $m_{\kappa}-$sum
$$ \Phi_{\kappa}(T,m,\epsilon) = \kappa \int_{L_{d}} \Psi_{\kappa}(u,m,\epsilon)
\exp( -(\frac{u}{T})^{\kappa} ) \frac{du}{u}
$$
for any halfline $L_{d} = \mathbb{R}_{+}e^{\sqrt{-1}d}$, with direction $d \in \mathbb{R}$. From Definition 2,
we can check that $T \mapsto \Phi_{\kappa}(T,m,\epsilon)$ defines a $E_{(\beta,\mu)}-$valued holomorphic
function on a disc $D(0,\Delta|\epsilon|^{\chi})$, for some $\Delta>0$ (independent of $\epsilon$), for all
$\epsilon \in D(0,\epsilon_{0}) \setminus \{ 0 \}$.

Now, we select a good covering $\{ \mathcal{E}_{p} \}_{0 \leq p \leq \varsigma - 1}$ in
$\mathbb{C}^{\ast}$ with aperture $\frac{\pi}{\chi \kappa}$ and a family of sectors
$\{ (S_{\mathfrak{d}_{p},\theta,\varrho_{X}|\epsilon|^{\chi}})_{0 \leq p \leq \varsigma-1},
\mathcal{T}_{\epsilon,\chi - \alpha} \}$ associated to this covering according to Definition 5. Proposition 6
allows us to choose for each direction $\mathfrak{d}_{p}$, a solution
$\omega_{\kappa}^{\mathfrak{d}_{p}}\tau,m,\epsilon)$ of the convolution equation (\ref{main_conv_eq_omega_kappa}) which
is located in the Banach space $F_{(\nu,\beta,\mu,\chi,\kappa,\epsilon)}^{\mathfrak{d}_{p}}$ and thus suffering the
next bounds
\begin{equation}
|\omega_{\kappa}^{\mathfrak{d}_{p}}(\tau,m,\epsilon)| \leq \varpi
(1 + |m|)^{-\mu} \exp( -\beta |m|)
\frac{|\frac{\tau}{\epsilon^{\chi}}|}{1 + |\frac{\tau}{\epsilon^{\chi}}|^{2\kappa}}
\exp( \nu |\frac{\tau}{\epsilon^{\chi}}|^{\kappa} ) \label{omega_kappa_dp_in_F}
\end{equation}
for all $\tau \in \bar{D}(0,\rho) \cup S_{\mathfrak{d}_{p}}$, $m \in \mathbb{R}$ and
$\epsilon \in D(0,\epsilon_{0}) \setminus \{ 0 \}$, for some suitable $\varpi > 0$. In particular, these
functions $\omega_{\kappa}^{\mathfrak{d}_{p}}(\tau,m,\epsilon)$ are analytic continuation w.r.t $\tau$ of a common
convergent series
$$ \omega_{\kappa}(\tau,m,\epsilon) = \sum_{n \geq 1} \frac{\omega_{n}(m,\epsilon)}{\Gamma(n/\kappa)} \tau^{n}
\in E_{(\beta,\mu)}\{\tau \} $$
which defines a solution of (\ref{main_conv_eq_omega_kappa}) for $\tau \in D(0,\rho)$. As a result,
the formal series
$$\Omega_{\kappa}(T,m,\epsilon) = \sum_{n \geq 1} \omega_{n}(m,\epsilon)T^{n} \in E_{(\beta,\mu)}[[T]]$$
turns out to be $m_{\kappa}-$summable in direction $\mathfrak{d}_{p}$ according to Definition 2, for all
$\epsilon \in D(0,\epsilon_{0}) \setminus \{ 0 \}$. We set
\begin{equation}
\Omega_{\kappa}^{\mathfrak{d}_{p}}(T,m,\epsilon) = \kappa \int_{L_{\gamma}}
\omega_{\kappa}^{\mathfrak{d}_{p}}(u,m,\epsilon) \exp( -(\frac{u}{T})^{\kappa} ) \frac{du}{u}
\end{equation}
as the $m_{\kappa}-$sum of $\Omega_{\kappa}(T,m,\epsilon)$ in direction $\mathfrak{d}_{p}$, with
$L_{\gamma} = \mathbb{R}_{+}e^{\sqrt{-1}\gamma} \subset S_{\mathfrak{d}_{p}}$. This map defines a
$E_{(\beta,\mu)}-$valued holomorphic function w.r.t $T$ on a sector
$$ S_{\mathfrak{d}_{p},\theta,\Delta|\epsilon|^{\chi}} =
\{ T \in \mathbb{C}^{\ast} : |T|<\Delta|\epsilon|^{\chi} \ \ , \ \ |\mathfrak{d}_{p} - \mathrm{arg}(T)|<\theta/2 \}
$$
for some $\theta \in (\frac{\pi}{\kappa},\frac{\pi}{\kappa} + \mathrm{Ap}(S_{\mathfrak{d}_{p}}))$
(where $\mathrm{Ap}(S_{\mathfrak{d}_{p}})$ stands for the aperture of the sector $S_{\mathfrak{d}_{p}}$)
and some $\Delta>0$ independent of $\epsilon$, for all $\epsilon \in D(0,\epsilon_{0}) \setminus \{ 0 \}$.

According to the formal identities enounced in Proposition 3 and by virtue of the properties of the
$m_{\kappa}-$sums with respect to derivatives and product (within the Banach space $E_{(\beta,\mu)}$ endowed
with the convolution product $\star$ described in Proposition 1) we notice that the function
$\Omega_{\kappa}^{\mathfrak{d}_{p}}(T,m,\epsilon)$ must solve the next equation
\begin{multline}
Q(im) ( \sum_{l=1}^{q} a_{l} \epsilon^{m_{l} - m_{0} - \alpha k_{l}} T^{k_l} + a_{0})
\Omega_{\kappa}^{\mathfrak{d}_{p}}(T,m,\epsilon) + (\sum_{l=0}^{M} c_{l}
\epsilon^{\mu_{l} - 2m_{0} - \alpha h_{l}} T^{h_l})\frac{1}{(2\pi)^{1/2}}\\
\times
\int_{-\infty}^{+\infty} Q_{1}(i(m-m_{1}))\Omega_{\kappa}^{\mathfrak{d}_{p}}(T,m-m_{1},\epsilon)
Q_{2}(im_{1})\Omega_{\kappa}^{\mathfrak{d}_{p}}(T,m_{1},\epsilon) dm_{1}=
\sum_{j=0}^{Q} B_{j}(m) \epsilon^{n_{j} - \alpha b_{j}} T^{b_j} \\
+ \Phi_{\kappa}(T,m,\epsilon)
+ R_{D}(im) \left( (T^{\kappa+1}\partial_{T})^{\delta_{D}} +
\sum_{1 \leq p \leq \delta_{D}-1} A_{\delta_{D},p} T^{\kappa(\delta_{D}-p)}(T^{\kappa+1}\partial_{T})^{p} \right)
\Omega_{\kappa}^{\mathfrak{d}_{p}}(T,m,\epsilon)\\
+ \sum_{l=1}^{D-1} \epsilon^{\Delta_{l} + \alpha(\delta_{l} - d_{l}) - m_{0}} R_{l}(im) T^{d_{l,\kappa}}
\left( (T^{\kappa+1}\partial_{T})^{\delta_l} \right. \\
\left. + \sum_{1 \leq p \leq \delta_{l}-1} A_{\delta_{l},p}
T^{\kappa(\delta_{l}-p)} (T^{\kappa+1}\partial_{T})^{p} \right) \Omega_{\kappa}^{\mathfrak{d}_{p}}(T,m,\epsilon)
\label{main_equation_Omega_kappa_dp}
\end{multline}
In the next step, we introduce
$$ U^{\mathfrak{d}_{p}}(T,z,\epsilon) =
\mathcal{F}^{-1}(m \mapsto \Omega_{\kappa}^{\mathfrak{d}_{p}}(T,m,\epsilon) )(z) $$
which defines a bounded holomorphic function w.r.t $T$ on $S_{\mathfrak{d}_{p},\theta,\Delta|\epsilon|^{\chi}}$,
w.r.t $z$ on $H_{\beta'}$ for any $0 < \beta' < \beta$ and all $\epsilon \in D(0,\epsilon_{0}) \setminus \{ 0 \}$.
Besides, by construction, we observe that the function $F^{\theta_{F}}$ defined in (\ref{defin_F_tzepsilon}) can be expressed as
$$ F^{\theta_{F}}(\epsilon^{-\alpha}T,z,\epsilon) = \mathcal{F}^{-1}(m \mapsto \Phi_{\kappa}(T,m,\epsilon) ) $$
which represents a bounded holomorphic function w.r.t $T$ on the disc $D(0,\Delta |\epsilon|^{\chi})$,
w.r.t $z$ on $H_{\beta'}$ for any $0 < \beta' < \beta$ and all $\epsilon \in D(0,\epsilon_{0}) \setminus \{ 0 \}$.

Bearing in mind the basic properties of the Fourier inverse transform described in Proposition 2
and taking notice of the expansions (\ref{expand_T_partial_T_delta_D_U}) and
(\ref{expand_T_partial_T_delta_l_U}), we extract from the latter equality
(\ref{main_equation_Omega_kappa_dp}) the next problem satisfied by $U^{\mathfrak{d}_{p}}(T,z,\epsilon)$, namely
\begin{multline}
( \sum_{l=1}^{q}a_{l} \epsilon^{m_{l}-m_{0}-\alpha k_{l}}T^{k_l} + a_{0})
Q(\partial_{z})U^{\mathfrak{d}_{p}}(T,z,\epsilon)\\+
(\sum_{l=0}^{M}c_{l} \epsilon^{\mu_{l} - 2m_{0} - \alpha h_{l}} T^{h_l})
Q_{1}(\partial_{z})U^{\mathfrak{d}_{p}}(T,z,\epsilon)Q_{2}(\partial_{z})U^{\mathfrak{d}_{p}}(T,z,\epsilon)\\
= \sum_{j=0}^{Q}b_{j}(z) \epsilon^{n_{j} - \alpha b_{j}} T^{b_{j}} + F^{\theta_{F}}(\epsilon^{-\alpha}T,z,\epsilon)
+ \sum_{l=1}^{D} \epsilon^{\Delta_{l}+ \alpha (\delta_{l}-d_{l}) - m_{0}} T^{d_{l}} R_{l}(\partial_{z})
 \partial_{T}^{\delta_l}U^{\mathfrak{d}_{p}}(T,z,\epsilon) \label{main_PDE_U_dp}
\end{multline}
Finally, we put
\begin{equation}
u^{\mathfrak{d}_{p}}(t,z,\epsilon) = \epsilon^{-m_{0}}U^{\mathfrak{d}_{p}}(\epsilon^{\alpha}t,z,\epsilon)
\end{equation}
that constitutes a bounded holomorphic function w.r.t $t$ on $\mathcal{T}_{\epsilon,\chi - \alpha}$ and
w.r.t $z$ on $H_{\beta'}$ for any $0 < \beta' < \beta$, for each fixed $\epsilon \in \mathcal{E}_{p}$,
according to Definition 5. Furthermore, by direct inspection, one can check that the function
$(x,z,\epsilon) \mapsto \epsilon^{m_{0}}u^{\mathfrak{d}_{p}}(x \epsilon^{\chi - \alpha},z,\epsilon)$
is bounded and holomorphic on $X \times \mathcal{E}_{p} \times H_{\beta'}$ for any
given $0 < \beta' < \beta$ and fixed $0 \leq p \leq \varsigma -1$. Moreover,
$u^{\mathfrak{d}_{p}}(t,z,\epsilon)$ solves the main equation (\ref{main_PDE_u}) where the piece of
forcing term $(t,z) \mapsto F^{\theta_{F}}(t,z,\epsilon)$ represents in particular a bounded holomorphic function
w.r.t $t$ on the disc
$D(0,\Delta|\epsilon|^{\chi - \alpha})$, w.r.t $z$ on the strip $H_{\beta'}$ for any given $0 < \beta' < \beta$
and fixed $\epsilon \in D(0,\epsilon_{0}) \setminus \{ 0 \}$.

In the last part of the proof, it remains to justify the bounds
(\ref{difference_u_dp_exp_small_epsilon}). According to the construction given above, we observe that
for each $0 \leq p \leq \varsigma - 1$, 
the function $\epsilon^{m_0}u^{\mathfrak{d}_{p}}(x \epsilon^{\chi - \alpha},z,\epsilon)$ can be written as
a $m_{\kappa}-$Laplace and Fourier inverse transform
\begin{equation}
\epsilon^{m_0}u^{\mathfrak{d}_{p}}(x \epsilon^{\chi - \alpha},z,\epsilon) = 
\frac{\kappa}{(2\pi)^{1/2}} \int_{-\infty}^{+\infty} \int_{L_{\gamma_{p}}}
\omega_{\kappa}^{\mathfrak{d}_{p}}(u,m,\epsilon) \exp( -(\frac{u}{x \epsilon^{\chi}})^{\kappa} )
e^{izm} \frac{du}{u} dm 
\end{equation}
where $L_{\gamma_{p}} = \mathbb{R}_{+}e^{\sqrt{-1}\gamma_{p}} \subset S_{\mathfrak{d}_{p}}$. The steps
of the verification are similar to the arguments disclosed in Theorem 1 of \cite{lama1} but we still decide to
present the details for the benefit of clarity. Namely, using the fact that
the function $u \mapsto \omega_{\kappa}(u,m,\epsilon) \exp( -(\frac{u}{\epsilon^{\chi} x})^{\kappa} )/u$
is holomorphic on $D(0,\rho)$ for all
$(m,\epsilon) \in \mathbb{R} \times (D(0,\epsilon_{0}) \setminus \{ 0 \})$, its integral along the union of a segment starting from
0 to $(\rho/2)e^{i\gamma_{p+1}}$, an arc of circle with radius $\rho/2$ which connects
$(\rho/2)e^{i\gamma_{p+1}}$ and $(\rho/2)e^{i\gamma_{p}}$ and a segment starting from
$(\rho/2)e^{i\gamma_{p}}$ to 0, is vanishing. Therefore, we can write the difference
$\epsilon^{m_0}u^{\mathfrak{d}_{p+1}} - \epsilon^{m_0}u^{\mathfrak{d}_{p}}$ as a sum of three integrals,
\begin{multline}
\epsilon^{m_0}u^{\mathfrak{d}_{p+1}}(x\epsilon^{\chi - \alpha},z,\epsilon) -
\epsilon^{m_0}u^{\mathfrak{d}_{p}}(x\epsilon^{\chi - \alpha},z,\epsilon) \\
=
\frac{\kappa}{(2\pi)^{1/2}}\int_{-\infty}^{+\infty}
\int_{L_{\rho/2,\gamma_{p+1}}}
\omega_{\kappa}^{\mathfrak{d}_{p+1}}(u,m,\epsilon) e^{-(\frac{u}{\epsilon^{\chi} x})^{\kappa}}
e^{izm} \frac{du}{u} dm\\ -
\frac{\kappa}{(2\pi)^{1/2}}\int_{-\infty}^{+\infty}
\int_{L_{\rho/2,\gamma_{p}}}
\omega_{\kappa}^{\mathfrak{d}_p}(u,m,\epsilon) e^{-(\frac{u}{\epsilon^{\chi} x})^{\kappa}}
e^{izm} \frac{du}{u} dm\\
+ \frac{\kappa}{(2\pi)^{1/2}}\int_{-\infty}^{+\infty}
\int_{C_{\rho/2,\gamma_{p},\gamma_{p+1}}}
\omega_{\kappa}(u,m,\epsilon) e^{-(\frac{u}{\epsilon^{\chi} x})^{\kappa}}
e^{izm} \frac{du}{u} dm \label{difference_epsilon_m0_u_dp_decomposition}
\end{multline}
where $L_{\rho/2,\gamma_{p+1}} = [\rho/2,+\infty)e^{i\gamma_{p+1}}$,
$L_{\rho/2,\gamma_{p}} = [\rho/2,+\infty)e^{i\gamma_{p}}$ and
$C_{\rho/2,\gamma_{p},\gamma_{p+1}}$ is an arc of circle with radius connecting
$(\rho/2)e^{i\gamma_{p}}$ and $(\rho/2)e^{i\gamma_{p+1}}$ with a well chosen orientation.\medskip

We give estimates for the quantity
$$ I_{1} = \left| \frac{\kappa}{(2\pi)^{1/2}}\int_{-\infty}^{+\infty}
\int_{L_{\rho/2,\gamma_{p+1}}}
\omega_{\kappa}^{\mathfrak{d}_{p+1}}(u,m,\epsilon) e^{-(\frac{u}{\epsilon^{\chi} x})^{\kappa}}
e^{izm} \frac{du}{u} dm \right|.
$$
By construction, the direction $\gamma_{p+1}$ (which depends on $\epsilon^{\chi} x$) is chosen in such
a way that
$\cos( \kappa( \gamma_{p+1} - \mathrm{arg}(\epsilon^{\chi} x) )) \geq \delta_{1}$, for all
$\epsilon \in \mathcal{E}_{p} \cap \mathcal{E}_{p+1}$, all $x \in X$, for some fixed $\delta_{1} > 0$.
From the estimates (\ref{omega_kappa_dp_in_F}), we get that
\begin{multline}
I_{1} \leq \frac{\kappa}{(2\pi)^{1/2}} \int_{-\infty}^{+\infty} \int_{\rho/2}^{+\infty}
\varpi (1+|m|)^{-\mu} e^{-\beta|m|}
\frac{ \frac{r}{|\epsilon|^{\chi}}}{1 + (\frac{r}{|\epsilon|^{\chi}})^{2\kappa} } \\
\times \exp( \nu (\frac{r}{|\epsilon|^{\chi}})^{\kappa} )
\exp(-\frac{\cos(\kappa(\gamma_{p+1} -
\mathrm{arg}(\epsilon^{\chi} x)))}{|\epsilon^{\chi} x|^{\kappa}}
r^{\kappa}) e^{-m\mathrm{Im}(z)} \frac{dr}{r} dm\\
\leq \frac{\kappa \varpi}{(2\pi)^{1/2}} \int_{-\infty}^{+\infty} e^{-(\beta - \beta')|m|} dm
\int_{\rho/2}^{+\infty}\frac{1}{|\epsilon|^{\chi}}
\exp( -(\frac{\delta_{1}}{|x|^{\kappa}} - \nu)(\frac{r}{|\epsilon|^{\chi}})^{\kappa} ) dr\\
\leq  \frac{2\kappa \varpi}{(2\pi)^{1/2}} \int_{0}^{+\infty} e^{-(\beta - \beta')m} dm
\int_{\rho/2}^{+\infty} \frac{|\epsilon|^{\chi (\kappa-1)}}{(\frac{\delta_1}{|x|^{\kappa}} - \nu)\kappa
(\frac{\rho}{2})^{\kappa-1}}
\times \frac{ (\frac{\delta_1}{|x|^{\kappa}} - \nu)\kappa r^{\kappa-1} }{|\epsilon|^{\chi \kappa}}
\exp( -(\frac{\delta_{1}}{|x|^{\kappa}} - \nu)(\frac{r}{|\epsilon|^{\chi}})^{\kappa} ) dr\\
\leq
\frac{2 \kappa \varpi}{(2\pi)^{1/2}} \frac{|\epsilon|^{\chi (\kappa-1)}}{(\beta - \beta')
(\frac{\delta_{1}}{|x|^{\kappa}} - \nu) \kappa (\frac{\rho}{2})^{\kappa-1}}
\exp( -(\frac{\delta_1}{|x|^{\kappa}} - \nu) \frac{(\rho/2)^{\kappa}}{|\epsilon|^{\chi \kappa}} )\\
\leq \frac{2 \kappa \varpi}{(2\pi)^{1/2}} \frac{|\epsilon|^{\chi (\kappa-1)}}{(\beta - \beta')
\delta_{2}\kappa(\frac{\rho}{2})^{\kappa-1}} \exp( -\delta_{2}
\frac{(\rho/2)^{\kappa}}{|\epsilon|^{\chi \kappa}} ) \label{I_1_exp_small_order_chi_alpha_kappa}
\end{multline}
for all $x \in X$ and $|\mathrm{Im}(z)| \leq \beta'$ with
$|x| < (\frac{\delta_{1}}{\delta_{2} + \nu})^{1/\kappa}$, for some $\delta_{2}>0$, for all
$\epsilon \in \mathcal{E}_{p} \cap \mathcal{E}_{p+1}$.\medskip

In the same way, we also give estimates for the integral
$$ I_{2} = \left| \frac{\kappa}{(2\pi)^{1/2}}\int_{-\infty}^{+\infty}
\int_{L_{\rho/2,\gamma_{p}}}
\omega_{\kappa}^{\mathfrak{d}_{p}}(u,m,\epsilon) e^{-(\frac{u}{\epsilon^{\chi} x})^{\kappa}}
e^{izm} \frac{du}{u} dm \right|.
$$
Namely, the direction $\gamma_{p}$ (which depends on $\epsilon^{\chi} x$) is chosen in such a way that
$\cos( \kappa( \gamma_{p} - \mathrm{arg}(\epsilon^{\chi} x) )) \geq \delta_{1}$, for all
$\epsilon \in \mathcal{E}_{p} \cap \mathcal{E}_{p+1}$, all $x \in X$, for some fixed $\delta_{1} > 0$.
Again from the estimates (\ref{omega_kappa_dp_in_F}) and following the same steps as in
(\ref{I_1_exp_small_order_chi_alpha_kappa}), we deduce that
\begin{equation}
I_{2} \leq \frac{2 \kappa \varpi}{(2\pi)^{1/2}} \frac{|\epsilon|^{\chi (\kappa-1)}}{(\beta - \beta')
\delta_{2}\kappa(\frac{\rho}{2})^{\kappa-1}}
\exp( -\delta_{2} \frac{(\rho/2)^{\kappa}}{|\epsilon|^{\chi \kappa}} )
\label{I_2_exp_small_order_chi_alpha_kappa}
\end{equation}
for all $x \in X$ and $|\mathrm{Im}(z)| \leq \beta'$ with
$|x| < (\frac{\delta_{1}}{\delta_{2} + \nu})^{1/\kappa}$, for some $\delta_{2}>0$, for all
$\epsilon \in \mathcal{E}_{p} \cap \mathcal{E}_{p+1}$.\medskip

Finally, we give upper bound estimates for the integral
$$
I_{3} = \left| \frac{\kappa}{(2\pi)^{1/2}}\int_{-\infty}^{+\infty}
\int_{C_{\rho/2,\gamma_{p},\gamma_{p+1}}}
\omega_{\kappa}(u,m,\epsilon) e^{-(\frac{u}{\epsilon^{\chi} x})^{\kappa}} e^{izm} \frac{du}{u} dm \right|.
$$
By construction, the arc of circle $C_{\rho/2,\gamma_{p},\gamma_{p+1}}$ is chosen in such a way that
$\cos(\kappa(\theta - \mathrm{arg}(\epsilon^{\chi} x))) \geq \delta_{1}$, for all
$\theta \in [\gamma_{p},\gamma_{p+1}]$ (if
$\gamma_{p} < \gamma_{p+1}$), $\theta \in [\gamma_{p+1},\gamma_{p}]$ (if
$\gamma_{p+1} < \gamma_{p}$), for all $x \in X$, all $\epsilon \in \mathcal{E}_{p} \cap \mathcal{E}_{p+1}$, for some
fixed $\delta_{1}>0$. Bearing in mind (\ref{omega_kappa_dp_in_F}) and the classical estimates
$$ \sup_{s \geq 0} s^{m_1} \exp( -m_{2} s ) = (\frac{m_1}{m_2})^{m_1}e^{-m_1} $$
for any $m_{1} \geq 0$, $m_{2}>0$, we get that
\begin{multline}
I_{3} \leq \frac{\kappa}{(2\pi)^{1/2}} \int_{-\infty}^{+\infty}  \left| \int_{\gamma_{p}}^{\gamma_{p+1}} \right.
\varpi (1+|m|)^{-\mu} e^{-\beta|m|}
\frac{ \frac{\rho/2}{|\epsilon|^{\chi}}}{1 + (\frac{\rho/2}{|\epsilon|^{\chi}})^{2\kappa} } \\
\times \exp( \nu (\frac{\rho/2}{|\epsilon|^{\chi}})^{\kappa} )
\exp( -\frac{\cos(\kappa(\theta - \mathrm{arg}(\epsilon^{\chi} x)))}{|\epsilon^{\chi} x|^{\kappa}}
(\frac{\rho}{2})^{\kappa})
\left. e^{-m\mathrm{Im}(z)} d\theta \right| dm\\
\leq \frac{\kappa \varpi }{(2\pi)^{1/2}} \int_{-\infty}^{+\infty}
e^{-(\beta - \beta')|m|} dm \times
|\gamma_{p} - \gamma_{p+1}| \frac{\rho/2}{|\epsilon|^{\chi}}
\exp( -\frac{( \frac{\delta_1}{|x|^{\kappa}} - \nu)}{2} (\frac{\rho/2}{|\epsilon|^{\chi}})^{\kappa}) \\
 \times \exp( -\frac{( \frac{\delta_1}{|x|^{\kappa}} - \nu)}{2}
 (\frac{\rho/2}{|\epsilon|^{\chi}})^{\kappa})\\
\leq \frac{ 2 \kappa \varpi |\gamma_{p} - \gamma_{p+1}|}{(2\pi)^{1/2}(\beta - \beta')}
\sup_{s \geq 0} s^{1/\kappa}e^{-\frac{1}{2}(\frac{\delta_1}{|x|^{\kappa}} - \nu)s} \times
\exp( -\frac{( \frac{\delta_1}{|x|^{\kappa}} - \nu)}{2} (\frac{\rho/2}{|\epsilon|^{\chi}})^{\kappa})\\
\leq \frac{2 \kappa
\varpi |\gamma_{p} - \gamma_{p+1}|}{(2\pi)^{1/2}(\beta - \beta')} (\frac{2/\kappa}{\delta_2})^{1/\kappa}
e^{-1/\kappa} \exp( -\frac{\delta_{2}}{2} (\frac{\rho/2}{|\epsilon|^{\chi}})^{\kappa})
\label{I_3_exp_small_order_chi_alpha_kappa}
\end{multline}
for all $x \in X$ and $|\mathrm{Im}(z)| \leq \beta'$ with
$|x| < (\frac{\delta_{1}}{\delta_{2} + \nu})^{1/\kappa}$, for some $\delta_{2}>0$, for all
$\epsilon \in \mathcal{E}_{p} \cap \mathcal{E}_{p+1}$.\medskip

Finally, gathering the three above inequalities (\ref{I_1_exp_small_order_chi_alpha_kappa}),
(\ref{I_2_exp_small_order_chi_alpha_kappa}) and (\ref{I_3_exp_small_order_chi_alpha_kappa}), we deduce
from the decomposition (\ref{difference_epsilon_m0_u_dp_decomposition}) that
\begin{multline*}
|\epsilon^{m_0}u^{\mathfrak{d}_{p+1}}(x\epsilon^{\chi - \alpha},z,\epsilon) -
\epsilon^{m_0}u^{\mathfrak{d}_{p}}(x\epsilon^{\chi - \alpha},z,\epsilon)| \leq
 \frac{4 \kappa \varpi}{(2\pi)^{1/2}} \frac{|\epsilon|^{\chi (\kappa-1)}}{(\beta - \beta')
\delta_{2}\kappa(\frac{\rho}{2})^{\kappa-1}}
\exp( -\delta_{2} \frac{(\rho/2)^{\kappa}}{|\epsilon|^{\chi \kappa}})\\
+  \frac{2 \kappa \varpi |\gamma_{p} - \gamma_{p+1}|}{(2\pi)^{1/2}(\beta - \beta')}
(\frac{2/\kappa}{\delta_2})^{1/\kappa}
e^{-1/\kappa} \exp( -\frac{\delta_{2}}{2} (\frac{\rho/2}{|\epsilon|^{\chi}})^{\kappa})
\end{multline*}
for all $x \in X$ and $|\mathrm{Im}(z)| \leq \beta'$ with
$|x| < (\frac{\delta_{1}}{\delta_{2} + \nu})^{1/k}$, for some $\delta_{2}>0$, for all
$\epsilon \in \mathcal{E}_{p} \cap \mathcal{E}_{p+1}$. Therefore, the inequality
(\ref{difference_u_dp_exp_small_epsilon}) holds.
\end{proof}

\section{Construction of outer solutions to the main problem}

In this section, we construct solutions of the main equation (\ref{main_PDE_u}) for $t$ in a large sectorial
domain outside the origin and we provide constraints under which their domain of holomorphy in time
can be extended to some $\epsilon-$depending domains in the vicinity of the origin.

\subsection{Classical Laplace transforms}

In this little subsection, we report some identities for the usual Laplace transform of holomorphic functions on
unbounded sectors involving convolution products and derivations. The next lemma has already appeared in our
previous work \cite{ma1} and is classical in reference textbooks such as \cite{ba}.

\begin{lemma} Let $m \geq 0$ be an integer. Let $w_{1}(\tau)$, $w_{2}(\tau)$ be holomorphic functions on
an unbounded open
sector $U_{d}$ centered at 0 with bisecting direction $d \in \mathbb{R}$ such that there exist $C,K>0$ with
$$ |w_{j}(\tau)| \leq C \exp(K|\tau|) \ \ , \ \ j=1,2$$
for all $\tau \in U_{d}$. We denote
$$ w_{1} \ast w_{2}(\tau) = \int_{0}^{\tau} w_{1}(\tau - s)w_{2}(s) ds $$ 
their convolution product on $U_{d}$. We pick up an unbounded sector $\mathcal{D}$ centered at 0 for which there exists
$\delta_{1}>0$ with
$$ d + \mathrm{arg}(t) \in (-\pi/2,\pi/2) \ \ , \ \ \cos(d + \mathrm{arg}(t)) \geq \delta_{1},$$
for all $t \in \mathcal{D}$. Then the following identities hold for the Laplace transforms
\begin{multline*}
\int_{L_{d}} \tau^{m} \exp(-t \tau) d\tau = \frac{m!}{t^{m+1}} \ \ , \ \
\partial_{t}(\int_{L_{d}}w_{1}(\tau) \exp(-t\tau) d\tau) = \int_{L_{d}}(-\tau)w_{1}(\tau) \exp(-t\tau) d\tau,\\
 \int_{L_{d}} w_{1} \ast w_{2}(\tau) \exp(-t \tau) d\tau = (\int_{L_{d}}w_{1}(\tau) \exp(-t\tau) d\tau)
(\int_{L_{d}}w_{2}(\tau) \exp(-t\tau) d\tau)
\end{multline*}
where $L_{d} = \mathbb{R}_{+}e^{id} \subset U_{d} \cup \{0 \}$, for all
$t \in \mathcal{D} \cap \{ |t| > K/\delta_{1} \}$.
\end{lemma}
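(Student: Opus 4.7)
The plan is to establish the three identities by direct computation, leveraging the exponential control $|w_j(\tau)| \leq C\exp(K|\tau|)$ together with the decay of $\exp(-t\tau)$ coming from $\cos(d+\arg(t)) \geq \delta_1>0$, which guarantees that on $\mathcal{D} \cap \{|t|>K/\delta_1\}$ the integrand $w_j(\tau)e^{-t\tau}$ is majorized along $L_d$ by $C\exp(-(|t|\delta_1-K)|\tau|)$, an $L^1$ function of $|\tau|$. This uniform integrability, valid on compact subsets of $\mathcal{D}\cap\{|t|>K/\delta_1\}$, is the single tool used throughout.

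First I would treat the identity $\int_{L_d}\tau^m e^{-t\tau}d\tau = m!/t^{m+1}$ by parametrizing $\tau=re^{id}$ and writing the integral as $e^{i(m+1)d}\int_0^{+\infty} r^m e^{-tre^{id}}dr$. A Cauchy theorem argument applied to $z\mapsto z^m e^{-tz}$ on the sector bounded by $L_d$ and $\mathbb{R}_+$ (with vanishing arc-contribution at infinity thanks to $\cos(d+\arg t)\geq \delta_1$) allows one to rotate the contour back to the positive real axis, which reduces the integral to the classical Gamma integral $\int_0^{+\infty} u^m e^{-u}du/t^{m+1} = m!/t^{m+1}$.

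Next, for the derivation identity, I would invoke differentiation under the integral sign (Leibniz): the dominant bound $r \cdot C \exp(-(|t|\delta_1-K)r)$ for the integrand of the formal derivative is uniformly integrable on compact subsets of $\mathcal{D}\cap\{|t|>K/\delta_1\}$, which justifies swapping $\partial_t$ and $\int_{L_d}$ and gives $\partial_t\int_{L_d} w_1(\tau)e^{-t\tau}d\tau = \int_{L_d}(-\tau)w_1(\tau)e^{-t\tau}d\tau$. For the convolution identity, I would start from
\[
\Bigl(\int_{L_d} w_1(\tau_1)e^{-t\tau_1}d\tau_1\Bigr)\Bigl(\int_{L_d} w_2(\tau_2)e^{-t\tau_2}d\tau_2\Bigr) = \int_{L_d}\!\!\int_{L_d} w_1(\tau_1)w_2(\tau_2)e^{-t(\tau_1+\tau_2)}d\tau_1 d\tau_2,
\]
apply Fubini (the double integrand is dominated by $C^2 e^{-(|t|\delta_1-K)(r_1+r_2)}$, which is integrable on $\mathbb{R}_+^2$), and perform the change of variables $\tau=\tau_1+\tau_2$, $s=\tau_2$ (which preserves $L_d$ since $L_d$ is closed under addition). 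The inner integral collapses to $\int_0^{\tau} w_1(\tau-s)w_2(s)ds = w_1 \ast w_2(\tau)$, yielding the third identity.

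The main technical hurdles are the contour rotation for the first identity (which requires carefully controlling the integrand on the arc at infinity using $\cos(d+\arg t)\geq \delta_1$) and the Fubini justification for the third identity; once the uniform exponential majorant is in hand, both reduce to routine verifications, and the differentiation identity follows immediately from the same integrability bound.
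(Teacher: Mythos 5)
The paper offers no proof of this lemma: it is stated without argument and attributed to earlier work \cite{ma1} and to classical sources such as \cite{ba}. So there is no internal proof to compare against, and your blind proof is simply the standard direct verification. The second identity (Leibniz rule under the uniform exponential majorant) and the third identity (Fubini followed by the change of variables $\tau=\tau_1+\tau_2$, $s=\tau_2$, which stays on $L_d$) are argued correctly.

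Your treatment of the first identity, however, has a genuine gap. You propose to apply Cauchy's theorem to $z \mapsto z^{m}e^{-tz}$ on the sector bounded by $L_{d}$ and $\mathbb{R}_{+}$ in the $z$-plane, asserting that the arc at infinity vanishes ``thanks to $\cos(d+\arg t) \geq \delta_{1}$.'' That hypothesis controls only the endpoint $\theta = d$ of the arc. At an intermediate angle $\theta \in [0,d]$ the relevant decay rate is $\cos(\theta + \arg t)$, and in particular at $\theta = 0$ you would need $\cos(\arg t) > 0$, i.e.\ $\mathrm{Re}(t) > 0$, which is not guaranteed: the lemma allows $d \in \mathbb{R}$ arbitrary, so $\arg t \in (-\pi/2 - d, \pi/2 - d)$ may lie entirely outside $(-\pi/2,\pi/2)$, in which case $\int_{0}^{+\infty} z^{m}e^{-tz}\,dz$ diverges and the rotation of $L_{d}$ onto $\mathbb{R}_{+}$ simply fails. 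The correct order of operations is to substitute $u = t\tau$ first, obtaining $\int_{L_{d}}\tau^{m}e^{-t\tau}\,d\tau = t^{-(m+1)}\int_{L_{d+\arg t}} u^{m}e^{-u}\,du$, where the new contour $L_{d+\arg t}$ lies in the open right half-plane by hypothesis; then rotate $L_{d+\arg t}$ onto $\mathbb{R}_{+}$. On the arc between the angles $0$ and $d+\arg t$ one has $\cos\theta \geq \cos(d+\arg t) \geq \delta_{1}$ by convexity of $(-\pi/2,\pi/2)$, so the arc contribution at radius $R$ is $O(R^{m+1}e^{-R\delta_{1}}) \to 0$, and the remaining integral is $\Gamma(m+1) = m!$. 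With this rearrangement the first identity goes through, and the rest of your argument stands.
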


\subsection{Sets of Banach spaces with exponential growth and decay of order 1}

In this subsection, we study a slightly modified version of the Banach spaces mentioned in subsection 3.2 of this
work in the particular situation of functions with exponential growth of order 1 on unbounded sectors in
$\mathbb{C}$ and exponential decay on $\mathbb{R}$. Although the proofs of the next lemma are proximate to
the ones of the statements disclosed in Subsection 3.2, we decide to present them for the sake of clarity and
convenience for the reader.

\begin{defin} Let $U_{d}$ be an open unbounded sector with bisecting direction $d \in \mathbb{R}$ and
$\mathcal{E}$ be an open sector with finite radius $r_{\mathcal{E}}$, both centered at $0$ in $\mathbb{C}$. Let
$\nu,\rho>0$ and $\beta>0,\Gamma \geq 0,\mu>1$ be real numbers and let $\epsilon \in \mathcal{E}$. We define
$E_{(\nu,\beta,\mu,\Gamma,\epsilon)}^{d}$ as the space of continuous functions $(\tau,m) \mapsto f(\tau,m)$
on $(\bar{D}(0,\rho) \cup U_{d}) \times \mathbb{R}$ with values in $\mathbb{C}$, holomorphic w.r.t $\tau$
on $D(0,\rho) \cup U_{d}$, with
$$ ||f(\tau,m)||_{(\nu,\beta,\mu,\Gamma,\epsilon)} = \sup_{\tau \in \bar{D}(0,\rho) \cup U_{d}, m \in \mathbb{R}}
(1 + |m|)^{\mu} e^{\beta |m|} (1 + |\frac{\tau}{\epsilon^{\Gamma}}|^{2})
\exp( -\nu |\frac{\tau}{\epsilon^{\Gamma}}| ) |f(\tau,m)|
$$
is finite. It turns out that the normed space
$(E_{(\nu,\beta,\mu,\Gamma,\epsilon)}^{d},||.||_{(\nu,\beta,\mu,\Gamma,\epsilon)})$ is a Banach space. 
\end{defin}
{\bf Remark:} Compared to the space $F_{(\nu,\beta,\mu,\chi,1,\epsilon)}^{d}$
mentioned above, the functions from $E_{(\nu,\beta,\mu,\Gamma,\epsilon)}^{d}$
do not need to vanish at $\tau=0$.

\begin{lemma}
Let $\gamma_{2} \geq 0$ be an integer. Take $B(m) \in E_{(\beta,\mu)}$ for some real numbers
$\beta>0$ and $\mu>1$. Then,
$\tau^{\gamma_2}B(m)$ belongs to $E_{(\nu,\beta,\mu,\Gamma,\epsilon)}^{d}$ for any real numbers
$\nu>0$,$\Gamma \geq 0$ and $\epsilon \in \mathcal{E}$. Moreover, there exists a constant $B_{1}>0$ (depending on
$\gamma_{2},\nu$) such that
\begin{equation}
||\tau^{\gamma_2}B(m)||_{(\nu,\beta,\mu,\Gamma,\epsilon)} \leq B_{1} ||B(m)||_{(\beta,\mu)}
|\epsilon|^{\Gamma \gamma_{2}}.
\end{equation}
\end{lemma}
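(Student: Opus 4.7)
The plan is to exploit the product structure of $\tau^{\gamma_2}B(m)$, which factors cleanly into a part depending only on $m$ and a part depending only on $\tau$. Since the weight in $\|\cdot\|_{(\nu,\beta,\mu,\Gamma,\epsilon)}$ also factors as a product of an $m$-weight times a $\tau$-weight, the supremum will split as a product of two independent suprema, and each one can be estimated separately.

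First I would write out the norm explicitly:
$$\|\tau^{\gamma_2}B(m)\|_{(\nu,\beta,\mu,\Gamma,\epsilon)}=\Bigl(\sup_{m\in\mathbb{R}}(1+|m|)^{\mu}e^{\beta|m|}|B(m)|\Bigr)\Bigl(\sup_{\tau\in\bar D(0,\rho)\cup U_d}(1+|\tau/\epsilon^{\Gamma}|^{2})e^{-\nu|\tau/\epsilon^{\Gamma}|}|\tau|^{\gamma_2}\Bigr).$$
The first factor is precisely $\|B(m)\|_{(\beta,\mu)}$. For the second factor I would change variables, writing $|\tau|^{\gamma_2}=|\epsilon|^{\Gamma\gamma_2}|\tau/\epsilon^{\Gamma}|^{\gamma_2}$ and setting $x=|\tau/\epsilon^{\Gamma}|\ge 0$, which reduces the sup to $|\epsilon|^{\Gamma\gamma_2}\sup_{x\ge 0}(1+x^{2})x^{\gamma_2}e^{-\nu x}$.

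Then I would observe that the one-variable supremum $\sup_{x\ge 0}(1+x^{2})x^{\gamma_2}e^{-\nu x}$ is finite because the polynomial $(1+x^{2})x^{\gamma_2}$ is dominated by the exponential decay $e^{-\nu x}$ at infinity (and is continuous on $[0,\infty)$), with a bound depending only on $\gamma_2$ and $\nu$. Calling that finite quantity $B_1$ yields the announced estimate.

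No step is really an obstacle here; the whole content of the lemma is the elementary fact that multiplication by $\tau^{\gamma_2}$ produces a factor $|\epsilon|^{\Gamma\gamma_2}$ under this weighted norm, because of the way $\epsilon$ is built into the weight as $\tau/\epsilon^{\Gamma}$. Holomorphy of $\tau^{\gamma_2}B(m)$ on $D(0,\rho)\cup U_d$ and continuity up to the boundary are obvious since $\gamma_2$ is a nonnegative integer and $B$ depends only on $m$, so $\tau^{\gamma_2}B(m)$ indeed lies in $E_{(\nu,\beta,\mu,\Gamma,\epsilon)}^{d}$.
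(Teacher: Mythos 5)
Your argument is correct and matches the paper's proof essentially step for step: factor $|\tau|^{\gamma_2}=|\epsilon|^{\Gamma\gamma_2}|\tau/\epsilon^{\Gamma}|^{\gamma_2}$, reduce to the one-variable supremum $\sup_{x\ge0}(1+x^2)x^{\gamma_2}e^{-\nu x}$, and note it is finite. The only cosmetic difference is that you split the joint supremum into a product of two suprema with equality (valid here since both factors are nonnegative), whereas the paper simply bounds the joint supremum by that product.
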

\begin{proof} By definition, we can write
\begin{multline}
||\tau^{\gamma_2}B(m)||_{(\nu,\beta,\mu,\Gamma,\epsilon)} =
\sup_{\tau \in \bar{D}(0,\rho) \cup U_{d}, m \in \mathbb{R}}
(1 + |m|)^{\mu} e^{\beta |m|} |B(m)| (1 + |\frac{\tau}{\epsilon^{\Gamma}}|^{2})\\
\times \exp( -\nu |\frac{\tau}{\epsilon^{\Gamma}}| )
|\frac{\tau}{\epsilon^{\Gamma}}|^{\gamma_2}|\epsilon|^{\Gamma \gamma_{2}} \leq
||B(m)||_{(\beta,\mu)} (\sup_{x \geq 0} (1 + x^2)e^{-\nu x} x^{\gamma_2}) |\epsilon|^{\Gamma \gamma_{2}}
\end{multline}
from which the lemma follows owing to the fact that an exponential function grows faster than any polynomial.
\end{proof}

\begin{lemma} Let $\gamma_{1},\gamma_{2},\gamma_{3} \geq 0$ be real numbers. We assume that
\begin{equation}
\gamma_{1} \leq \gamma_{2} + \gamma_{3} + 1 \ \ , \ \ \gamma_{1} \geq \gamma_{3}.
\label{constraints_Gamma_conv_s_gamma_f} 
\end{equation}
Then, there exists a constant $B_{2}>0$ (depending on $\gamma_{1},\gamma_{2},\gamma_{3},\nu$) such that
\begin{equation}
|| \frac{1}{\tau^{\gamma_1}} \int_{0}^{\tau} (\tau - s)^{\gamma_2} s^{\gamma_3} f(s,m) ds
||_{(\nu,\beta,\mu,\Gamma,\epsilon)} \leq B_{2}
||f(\tau,m)||_{(\nu,\beta,\mu,\Gamma,\epsilon)}
|\epsilon|^{\Gamma( \gamma_{2} + \gamma_{3} + 1) - \Gamma\gamma_{1}} \label{norm_Gamma_conv_s_gamma_f}
\end{equation}
for all $f(\tau,m) \in E_{(\nu,\beta,\mu,\Gamma,\epsilon)}^{d}$.
\end{lemma}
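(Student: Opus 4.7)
The plan is to imitate the strategy used in Proposition 4 and Lemma 2: unwind the definition of the norm, pull out the factor $||f||_{(\nu,\beta,\mu,\Gamma,\epsilon)}$, reduce to a scalar variable $y=|\tau|/|\epsilon|^{\Gamma}$ by a suitable change of variables, and finally verify that the resulting function of $y$ is bounded uniformly. First, I would parametrize the integration path by the straight segment $s = re^{i\arg(\tau)}$, $r\in[0,|\tau|]$, which is legitimate because $f(\cdot,m)$ is holomorphic on $D(0,\rho)\cup U_{d}$. The relations $|\tau-s|=|\tau|-r$ and $|ds|=dr$ turn the complex integral into the real integral $\int_{0}^{|\tau|}(|\tau|-r)^{\gamma_{2}}r^{\gamma_{3}}|f(re^{i\arg\tau},m)|\,dr$, and the pointwise bound
\[
|f(re^{i\arg\tau},m)| \le ||f||_{(\nu,\beta,\mu,\Gamma,\epsilon)}(1+|m|)^{-\mu}e^{-\beta|m|}\frac{1}{1+(r/|\epsilon|^{\Gamma})^{2}}\exp\!\Bigl(\nu\,r/|\epsilon|^{\Gamma}\Bigr)
\]
is inserted under the integral.

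Next, I would perform the change of variables $r = |\epsilon|^{\Gamma}u$ and set $y := |\tau|/|\epsilon|^{\Gamma}$. The weights $(1+|m|)^{\mu}e^{\beta|m|}$ and $(1+|\tau/\epsilon^{\Gamma}|^{2})\exp(-\nu|\tau/\epsilon^{\Gamma}|)$ from the norm fold cleanly with the corresponding reciprocals coming from $|f|$, and the factor $|\tau|^{-\gamma_{1}}=|\epsilon|^{-\Gamma\gamma_{1}}y^{-\gamma_{1}}$ combines with $|\epsilon|^{\Gamma(\gamma_{2}+\gamma_{3}+1)}$ arising from the substitution, yielding exactly the predicted power $|\epsilon|^{\Gamma(\gamma_{2}+\gamma_{3}+1-\gamma_{1})}$. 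The remaining task is then to show that
\[
G(y) := (1+y^{2})e^{-\nu y}\,y^{-\gamma_{1}}\int_{0}^{y}(y-u)^{\gamma_{2}}u^{\gamma_{3}}\frac{e^{\nu u}}{1+u^{2}}\,du
\]
is bounded uniformly for $y>0$ by a constant depending only on $\gamma_{1},\gamma_{2},\gamma_{3},\nu$.

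To handle $G(y)$ I would first absorb the exponential blow-up by the substitution $v=y-u$, which transforms $G(y)$ into
\[
G(y) = (1+y^{2})\,y^{-\gamma_{1}}\int_{0}^{y}v^{\gamma_{2}}(y-v)^{\gamma_{3}}\frac{e^{-\nu v}}{1+(y-v)^{2}}\,dv.
\]
Then I would split into two regimes. For $0\le y\le 2$, drop $1/(1+(y-v)^{2})\le 1$ and $e^{-\nu v}\le 1$ to obtain a Beta integral $B(\gamma_{2}+1,\gamma_{3}+1)\,y^{\gamma_{2}+\gamma_{3}+1}$, so $G(y)\le C\,y^{\gamma_{2}+\gamma_{3}+1-\gamma_{1}}$, which stays bounded thanks to $\gamma_{1}\le\gamma_{2}+\gamma_{3}+1$. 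For $y\ge 2$, split the $v$-integration at $y/2$: on $[y/2,y]$ the factor $e^{-\nu v}\le e^{-\nu y/2}$ overwhelms every polynomial piece and produces an exponentially small contribution; on $[0,y/2]$ one has $(y-v)\ge y/2$, hence $(y-v)^{\gamma_{3}}/(1+(y-v)^{2})\le 4\,y^{\gamma_{3}-2}$, and $\int_{0}^{\infty}v^{\gamma_{2}}e^{-\nu v}\,dv=\Gamma(\gamma_{2}+1)/\nu^{\gamma_{2}+1}$ is finite, so this piece is bounded by a constant times $(1+y^{2})y^{-\gamma_{1}+\gamma_{3}-2}\le 2\,y^{\gamma_{3}-\gamma_{1}}$, which remains bounded by the second hypothesis $\gamma_{1}\ge\gamma_{3}$.

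The main obstacle I anticipate is the bookkeeping of the two competing polynomial weights $(1+y^{2})$ and $y^{-\gamma_{1}}$ combined with the essential cancellation of $e^{-\nu y}$ against the singular factor $e^{\nu u}/(1+u^{2})$ inside the integral. The substitution $v=y-u$ is the clean move that exposes this cancellation, and once it is performed the two hypotheses in \eqref{constraints_Gamma_conv_s_gamma_f} correspond exactly to the small-$y$ and large-$y$ regimes: $\gamma_{1}\le\gamma_{2}+\gamma_{3}+1$ rules the behaviour near $0$ via the Beta integral, while $\gamma_{1}\ge\gamma_{3}$ controls the polynomial growth as $y\to\infty$. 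Collecting the bounds in the two regimes and combining them with the $|\epsilon|^{\Gamma(\gamma_{2}+\gamma_{3}+1-\gamma_{1})}$ prefactor delivers \eqref{norm_Gamma_conv_s_gamma_f} with an explicit constant $B_{2}$.
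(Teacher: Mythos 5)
Your proof is correct, and it follows the same overall strategy as the paper: unfold the norm, factor out $\|f\|_{(\nu,\beta,\mu,\Gamma,\epsilon)}$, rescale $r = |\epsilon|^{\Gamma}u$ to pull out the power $|\epsilon|^{\Gamma(\gamma_{2}+\gamma_{3}+1-\gamma_{1})}$, and then bound a scalar supremum $\sup_{y\ge 0}(1+y^{2})e^{-\nu y}y^{-\gamma_{1}}G(y)$ by separating the small-$y$ regime (Beta integral, governed by $\gamma_{1}\le\gamma_{2}+\gamma_{3}+1$) from the large-$y$ regime (split at $y/2$, governed by $\gamma_{1}\ge\gamma_{3}$). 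The one place where you genuinely diverge is the treatment of the large-$y$ supremum: the paper keeps the integrand as $\frac{e^{\nu h'}}{1+(h')^{2}}(x-h')^{\gamma_{2}}(h')^{\gamma_{3}}$, splits it into $G_{1}+G_{2}$ at $x/2$, and for the near-$x$ piece $G_{2}$ invokes a ready-made exponential estimate (18) from \cite{lama2} to conclude $G_{2.1}(x)\le K_{2.1}x^{\gamma_{3}}e^{\nu x}$. You instead perform the substitution $v=y-u$ \emph{before} splitting, which converts $e^{\nu u}$ into $e^{-\nu v}$ and cancels the outer weight $e^{-\nu y}$ exactly; after that, both subintervals $[0,y/2]$ and $[y/2,y]$ are handled by elementary polynomial bounds and a convergent Gamma integral, with no appeal to an external lemma. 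Your version is therefore more self-contained and arguably more transparent about where each of the two hypotheses in (\ref{constraints_Gamma_conv_s_gamma_f}) is used; the paper's version is more economical when the auxiliary estimate from \cite{lama2} is already on hand.
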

\begin{proof} By factoring out the pieces composing the norm of $f(\tau,m)$, we can rewrite the left handside of
(\ref{norm_Gamma_conv_s_gamma_f}) as
\begin{multline}
A = || \frac{1}{\tau^{\gamma_1}} \int_{0}^{\tau} (\tau - s)^{\gamma_2} s^{\gamma_3} f(s,m) ds
||_{(\nu,\beta,\mu,\Gamma,\epsilon)}\\
= \sup_{\tau \in \bar{D}(0,\rho) \cup U_{d},m \in \mathbb{R}}
(1 + |m|)^{\mu} e^{\beta |m|} (1 + |\frac{\tau}{\epsilon^{\Gamma}}|^{2})
\exp( -\nu |\frac{\tau}{\epsilon^{\Gamma}}| )\\
\times \left|\frac{1}{\tau^{\gamma_1}}\int_{0}^{\tau} \left\{ (1 + |m|)^{\mu} e^{\beta |m|}
(1 + |\frac{s}{\epsilon^{\Gamma}}|^{2}) \exp( - \nu |\frac{s}{\epsilon^{\Gamma}}| ) f(s,m) \right\}
\mathcal{A}(\tau,s,m,\epsilon) ds \right| \label{A=}
\end{multline}
where
$$ \mathcal{A}(\tau,s,m,\epsilon) = \frac{1}{(1 + |m|)^{\mu}e^{\beta|m|}}
\frac{\exp( \nu |\frac{s}{\epsilon^{\Gamma}}| )}{1 + |\frac{s}{\epsilon^{\Gamma}}|^{2}}
(\tau - s)^{\gamma_2}s^{\gamma_3}. $$
As a result, we obtain
\begin{equation}
A \leq B_{2.1}(\epsilon) ||f(\tau,m)||_{(\nu,\beta,\mu,\Gamma,\epsilon)} \label{A_<_norm_f}
\end{equation}
where
$$ B_{2.1}(\epsilon) = \sup_{\tau \in \bar{D}(0,\rho) \cup U_{d}}
(1 + |\frac{\tau}{\epsilon^{\Gamma}}|^{2}) \exp( -\nu |\frac{\tau}{\epsilon^{\Gamma}}| )
\frac{1}{|\tau|^{\gamma_1}} \int_{0}^{|\tau|}
\frac{\exp( \nu \frac{h}{|\epsilon|^{\Gamma}} )}{ 1 + (\frac{h}{|\epsilon|^{\Gamma}})^{2} }
(|\tau| - h)^{\gamma_{2}} h^{\gamma_{3}} dh .$$
We perform the change of variable $h = |\epsilon|^{\Gamma}h'$ inside the integral part of $B_{2.1}(\epsilon)$ and
get the next bounds
\begin{multline}
B_{2.1}(\epsilon) = \sup_{\tau \in \bar{D}(0,\rho) \cup U_{d}}
(1 + |\frac{\tau}{\epsilon^{\Gamma}}|^{2}) \exp( -\nu |\frac{\tau}{\epsilon^{\Gamma}}| )
\frac{1}{(\frac{|\tau|}{|\epsilon|^{\Gamma}} |\epsilon|^{\Gamma})^{\gamma_1}}
|\epsilon|^{\Gamma(\gamma_{2}+\gamma_{3}+1)} \\
\times \int_{0}^{\frac{|\tau|}{|\epsilon|^{\Gamma}}} \frac{e^{\nu h'}}{1 + (h')^{2}}
(\frac{|\tau|}{|\epsilon|^{\Gamma}} - h')^{\gamma_2} (h')^{\gamma_{3}} dh'
\leq |\epsilon|^{\Gamma (\gamma_{2}+\gamma_{3}+1) - \Gamma \gamma_{1}}
\sup_{x \geq 0} (1 + x^2)e^{-\nu x}
\frac{1}{x^{\gamma_1}} G(x) \label{B21_estimates}
\end{multline}
where
$$ G(x) = \int_{0}^{x} \frac{e^{\nu h'}}{1 + (h')^{2}} (x-h')^{\gamma_{2}} (h')^{\gamma_3} dh'. $$
In the last part of the proof, we need to study the function $G(x)$ near 0 and $+\infty$. In order to investigate its
behaviour in the vicinity of the origin, we make the change of variable $h'=xu$ inside $G(x)$, getting
\begin{equation}
G(x) = x^{\gamma_{2} + \gamma_{3} + 1} \int_{0}^{1} \frac{e^{\nu xu}}{1 + (xu)^{2}}
(1 - u)^{\gamma_2}u^{\gamma_3} du. \label{G_near_0}
\end{equation}
From the first constraint in (\ref{constraints_Gamma_conv_s_gamma_f}), we deduce that $G(x)/x^{\gamma_1}$ is
bounded near 0. For large values of $x$, we proceed as in Proposition 1 of \cite{lama2} and split $G(x)$ into two
pieces
$$ G(x) = G_{1}(x) + G_{2}(x) $$
where
$$ G_{1}(x) = \int_{0}^{x/2} \frac{e^{\nu h'}}{1 + (h')^{2}} (x-h')^{\gamma_{2}} (h')^{\gamma_3} dh' \ \ , \ \
G_{2}(x) = \int_{x/2}^{x} \frac{e^{\nu h'}}{1 + (h')^{2}} (x-h')^{\gamma_{2}} (h')^{\gamma_3} dh' $$
Since $\gamma_{2} \geq 0$, we notice that $(x-h')^{\gamma_2} \leq x^{\gamma_2}$ for $0 \leq h' \leq x/2$. Therefore,
$$ G_{1}(x) \leq x^{\gamma_2}e^{\nu x/2}\int_{0}^{x/2} (h')^{\gamma_3} dh' =
x^{\gamma_2}e^{\nu x/2} \frac{(x/2)^{\gamma_{3}+1}}{\gamma_{3}+1}. $$
Accordingly, we get that
\begin{equation}
\sup_{x \geq 1} (1+x^{2})e^{-\nu x} \frac{1}{x^{\gamma_1}} G_{1}(x) \label{G1_near_infty} 
\end{equation}
is finite. On the other hand, we check that $1 + (h')^{2} \geq 1 + (x/2)^{2}$ for $x/2 \leq h' \leq x$. Hence,
$$ G_{2}(x) \leq \frac{1}{1 + (x/2)^{2}} G_{2.1}(x) $$
where
$$ G_{2.1}(x) = \int_{x/2}^{x} e^{\nu h'} (h')^{\gamma_3} (x-h')^{\gamma_2} dh' $$
Bestowing the estimates (18) in \cite{lama2}, we get a constant $K_{2.1}>0$ (depending on
$\nu,\gamma_{2},\gamma_{3}$) such that
$$ G_{2.1}(x) \leq K_{2.1}x^{\gamma_3}e^{\nu x} $$
for all $x \geq 1$. It follows that
\begin{equation}
\sup_{x \geq 1} (1+x^{2})e^{-\nu x} \frac{1}{x^{\gamma_1}} G_{2}(x) \label{G2_near_infty} 
\end{equation}
is finite provided that the second constraint from (\ref{constraints_Gamma_conv_s_gamma_f}) holds.
Finally, collecting (\ref{A=}), (\ref{A_<_norm_f}), (\ref{B21_estimates}),
(\ref{G_near_0}), (\ref{G1_near_infty}) and (\ref{G2_near_infty})
yields the estimates (\ref{norm_Gamma_conv_s_gamma_f}).
\end{proof}

\begin{lemma} Let $Q_{1}(X),Q_{2}(X)$ and $R_{D}(X)$ belonging to $\mathbb{C}[X]$ with $R_{D}(im) \neq 0$ for all
$m \in \mathbb{R}$ and
\begin{equation}
\mathrm{deg}(R_{D}) \geq \mathrm{deg}(Q_{1}) \ \ , \ \ \mathrm{deg}(R_{D}) \geq \mathrm{deg}(Q_{2}). 
\end{equation}
Besides, we choose the real parameter $\mu > 1$ with
$\mu > \max( \mathrm{deg}(Q_{1}) + 1, \mathrm{deg}(Q_{2}) + 1)$. Then, there exists a constant $B_{3}>0$
(depending on $\mu,Q_{1},Q_{2},R_{D}$) such that
\begin{multline}
||\frac{1}{R_{D}(im)} \int_{0}^{\tau} \int_{-\infty}^{+\infty} Q_{1}(i(m-m_{1}))f(\tau - s,m-m_{1})
Q_{2}(im_{1})g(s,m_{1}) ds dm_{1} ||_{(\nu,\beta,\mu,\Gamma,\epsilon)}\\
\leq B_{3}|\epsilon|^{\Gamma}||f(\tau,m)||_{(\nu,\beta,\mu,\Gamma,\epsilon)}
||g(\tau,m)||_{(\nu,\beta,\mu,\Gamma,\epsilon)} \label{norm_double_conv_f_g_Gamma}
\end{multline}
for all $f(\tau,m),g(\tau,m) \in E_{(\nu,\beta,\mu,\Gamma,\epsilon)}^{d}$.
\end{lemma}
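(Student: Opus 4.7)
The plan is to mimic the proof strategy of Proposition~5, adapted to the simpler order-$1$ spaces $E_{(\nu,\beta,\mu,\Gamma,\epsilon)}^{d}$. The key conceptual simplification compared to Proposition~5 is that the convolution variable $s$ here lies on a straight segment $[0,\tau]$, so that $|s|+|\tau-s|=|\tau|$; this makes the exponential weights cancel exactly and reduces the analysis to polynomial decay.

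First, I would unfold the definition of $\|\cdot\|_{(\nu,\beta,\mu,\Gamma,\epsilon)}$ on the left-hand side and insert/remove the missing weights to reshape the bound as
\begin{multline*}
\sup_{\tau,m}(1+|m|)^{\mu}e^{\beta|m|}(1+|\tfrac{\tau}{\epsilon^{\Gamma}}|^{2})
e^{-\nu|\tfrac{\tau}{\epsilon^{\Gamma}}|}\frac{1}{|R_{D}(im)|}\\
\times\bigg|\int_{0}^{\tau}\!\!\int_{-\infty}^{+\infty}
\bigl\{\tfrac{(1+|m-m_{1}|)^{\mu}e^{\beta|m-m_{1}|}(1+|\tfrac{\tau-s}{\epsilon^{\Gamma}}|^{2})
e^{-\nu|\tfrac{\tau-s}{\epsilon^{\Gamma}}|}f(\tau-s,m-m_{1})}{\ }\bigr\}\\
\times\bigl\{\text{analogous block for }g(s,m_{1})\bigr\}\,\mathcal{B}(\tau,s,m,m_{1})\,ds\,dm_{1}\bigg|,
\end{multline*}
where $\mathcal{B}$ collects all the inverse weights together with $Q_{1}(i(m-m_{1}))Q_{2}(im_{1})$. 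Using the triangular inequality $|m|\leq|m-m_{1}|+|m_{1}|$ to split the exponentials in $m$, the bound factors as $B_{3.1}\cdot B_{3.2}(\epsilon)\cdot\|f\|\,\|g\|$.

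Next I would handle the two factors separately. For $B_{3.1}$, using $|Q_{j}(ix)|\lesssim(1+|x|)^{\deg Q_{j}}$ and $|R_{D}(im)|\gtrsim(1+|m|)^{\deg R_{D}}$ together with $\deg R_{D}\geq\max(\deg Q_{1},\deg Q_{2})$, the quantity
$$\sup_{m\in\mathbb{R}}(1+|m|)^{\mu-\deg R_{D}}\int_{-\infty}^{+\infty}\frac{dm_{1}}{(1+|m-m_{1}|)^{\mu-\deg Q_{1}}(1+|m_{1}|)^{\mu-\deg Q_{2}}}$$
is finite by Lemma~4 of \cite{ma} under $\mu>\max(\deg Q_{1}+1,\deg Q_{2}+1)$. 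For $B_{3.2}(\epsilon)$, after parameterizing $s=u\tau$ with $u\in[0,1]$, the identity $|s|+|\tau-s|=|\tau|$ produces a clean cancellation of all $\tau$-dependent exponentials; what remains is
$$B_{3.2}(\epsilon)\leq\sup_{\tau}(1+|\tfrac{\tau}{\epsilon^{\Gamma}}|^{2})\int_{0}^{|\tau|}\frac{dh}{\bigl(1+|\tfrac{|\tau|-h}{\epsilon^{\Gamma}}|^{2}\bigr)\bigl(1+|\tfrac{h}{\epsilon^{\Gamma}}|^{2}\bigr)}.$$
The change of variable $h=|\epsilon|^{\Gamma}h'$ extracts the prefactor $|\epsilon|^{\Gamma}$, leaving
$\sup_{X\geq 0}(1+X^{2})\int_{0}^{X}\frac{dh'}{(1+(X-h')^{2})(1+(h')^{2})}$, which is a purely numerical quantity.

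The only delicate point is checking that this last supremum is indeed finite. I would split the $h'$-integral on $[0,X/2]\cup[X/2,X]$: on the first half $(X-h')\geq X/2$ so the first denominator absorbs the $(1+X^{2})$ weight, and by symmetry the second half is handled identically. This gives a bound independent of $X$, producing the required constant $B_{3}$ depending only on $\mu,Q_{1},Q_{2},R_{D}$ (and implicitly $\nu$, although $\nu$ has dropped out by the cancellation). Assembling the two factors yields exactly \eqref{norm_double_conv_f_g_Gamma}.
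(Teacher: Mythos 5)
Your proof is correct and follows the paper's argument essentially step by step: unfolding the norm, using the triangular inequality to factor the bound into $B_{3.1}\cdot B_{3.2}(\epsilon)$, estimating $B_{3.1}$ via the polynomial growth bounds and the integral lemma, and extracting the factor $|\epsilon|^{\Gamma}$ from $B_{3.2}(\epsilon)$ by the rescaling $h'=|\epsilon|^{\Gamma}h$. The one place you deviate is at the very end: the paper disposes of the quantity $\sup_{X\geq 0}(1+X^2)\int_0^X\frac{dh}{(1+(X-h)^2)(1+h^2)}$ by citing Corollary 4.9 of \cite{cota}, whereas you prove finiteness directly by splitting the integration range into $[0,X/2]\cup[X/2,X]$ and observing that on each half one of the two denominators absorbs the $(1+X^2)$ weight. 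That elementary verification is correct and makes the argument self-contained, which is a small improvement over the citation. You also usefully make explicit that the exponential weights cancel because $|s|+|\tau-s|=|\tau|$ for $s$ on the segment $[0,\tau]$, a fact the paper relies on but does not spell out.
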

\begin{proof} As above, by setting apart the terms stemming from the norms of $f$ and $g$, we can reorganize
the left handside of (\ref{norm_double_conv_f_g_Gamma}) as follows
\begin{multline}
K =  ||\frac{1}{R_{D}(im)} \int_{0}^{\tau} \int_{-\infty}^{+\infty} Q_{1}(i(m-m_{1}))f(\tau - s,m-m_{1})\\
\times
Q_{2}(im_{1})g(s,m_{1}) ds dm_{1} ||_{(\nu,\beta,\mu,\Gamma,\epsilon)} =
\sup_{\tau \in \bar{D}(0,\rho) \cup U_{d}, m \in \mathbb{R}}
(1 + |m|)^{\mu}e^{\beta|m|}(1 + |\frac{\tau}{\epsilon^{\Gamma}}|^{2})
\exp(-\nu |\frac{\tau}{\epsilon^{\Gamma}}| ) \\
\times |\frac{1}{R_{D}(im)} \int_{0}^{\tau}
\int_{-\infty}^{+\infty} \{ (1 + |m-m_{1}|)^{\mu} e^{\beta|m-m_{1}|}
(1 + (\frac{|\tau - s|}{|\epsilon|^{\Gamma}})^{2}) \exp(-\nu |\frac{\tau-s}{\epsilon^{\Gamma}}| )
f(\tau-s,m-m_{1}) \}\\
\times \{ (1 + |m_{1}|)^{\mu}e^{\beta|m_{1}|}(1 + |\frac{s}{\epsilon^{\Gamma}}|^{2})
\exp(-\nu |\frac{s}{\epsilon^{\Gamma}}|) g(s,m_{1}) \} \mathcal{K}(\tau,s,m,m_{1}) ds dm_{1}|
\label{defin_K}
\end{multline}
where
\begin{multline*}
\mathcal{K}(\tau,s,m,m_{1}) = \\
\frac{e^{-\beta|m-m_{1}|}e^{-\beta|m_{1}|}}{(1 + |m-m_{1}|)^{\mu}
(1 + |m_{1}|)^{\mu}} Q_{1}(i(m-m_{1}))Q_{2}(im_{1})
\frac{\exp( \nu |\frac{\tau - s}{\epsilon^{\Gamma}}| ) \exp( \nu|\frac{s}{\epsilon^{\Gamma}}| )}{
(1 + (\frac{|\tau - s|}{|\epsilon|^{\Gamma}})^{2})(1 + |\frac{s}{\epsilon^{\Gamma}}|^{2})}
\end{multline*}
According to the triangular inequality, $|m| \leq |m-m_{1}| + |m_{1}|$ for all $m,m_{1} \in \mathbb{R}$, we get that
\begin{equation}
K \leq B_{3.1} B_{3.2}(\epsilon)
||f(\tau,m)||_{(\nu,\beta,\mu,\Gamma,\epsilon)}||g(\tau,m)||_{(\nu,\beta,\mu,\Gamma,\epsilon)} \label{norm_K}
\end{equation}
where
$$ B_{3.1} = \sup_{m \in \mathbb{R}} \frac{(1 + |m|)^{\mu}}{|R_{D}(im)|}
\int_{-\infty}^{+\infty} \frac{ |Q_{1}(i(m-m_{1}))||Q_{2}(im_{1})| }{(1 + |m-m_{1}|)^{\mu}(1 + |m_{1}|)^{\mu}}
dm_{1} $$
and
$$B_{3.2}(\epsilon) = \sup_{\tau \in \bar{D}(0,\rho) \cup U_{d}}
(1 + |\frac{\tau}{\epsilon^{\Gamma}}|^{2})\int_{0}^{|\tau|}
\frac{1}{1 + \frac{(|\tau|-h')^{2}}{|\epsilon|^{2\Gamma}}}
\frac{1}{1 + \frac{(h')^{2}}{|\epsilon|^{2\Gamma}}} dh'.$$
Since $B_{3.1}=C_{3.1}$ in formula (\ref{C31_defin}), we deduce from the bounds (\ref{bounds_C31}), that
$B_{3.1}$ is finite. Besides, by operating the change of variable $h' = |\epsilon|^{\Gamma}h$ inside the
integral piece of $B_{3.2}(\epsilon)$, we observe that
\begin{equation}
B_{3.2}(\epsilon) = \sup_{\tau \in \bar{D}(0,\rho) \cup U_{d}}
(1 + |\frac{\tau}{\epsilon^{\Gamma}}|^{2}) |\epsilon|^{\Gamma}
\int_{0}^{\frac{|\tau|}{|\epsilon|^{\Gamma}}} \frac{1}{1 + (\frac{|\tau|}{|\epsilon|^{\Gamma}} - h)^{2}}
\frac{1}{1 + h^{2}} dh\\
\leq |\epsilon|^{\Gamma} \sup_{x \geq 0} \tilde{B}_{3.2}(x) \label{bounds_B32_epsilon}
\end{equation}
where
$$\tilde{B}_{3.2}(x) = (1 + x^{2})\int_{0}^{x} \frac{1}{1 + (x-h)^{2}} \frac{1}{1 + h^{2}} dh.$$
In accordance with Corollary 4.9 of \cite{cota}, we get that $\sup_{x \geq 0} \tilde{B}_{3.2}(x)$ is finite.
Gathering (\ref{defin_K}), (\ref{norm_K}) and (\ref{bounds_B32_epsilon}) furnishes the result.
\end{proof}

\subsection{Construction of formal expressions solutions of the main equation as classical Laplace
and Fourier inverse transforms}

Within this subsection, we search for solutions of the main equation (\ref{main_PDE_u}) expressed as
integral representations through classical Laplace and Fourier inverse transforms
\begin{equation}
v(t,z,\epsilon) = \frac{\epsilon^{\gamma_{0}}}{(2\pi)^{1/2}}
\int_{-\infty}^{+\infty} \int_{L_{\mathfrak{u}}} W(u,m,\epsilon) \exp( -(\frac{t}{\epsilon^{\gamma}})u )
e^{izm} du dm \label{defin_v_Laplace_Fourier}
\end{equation}
for some real number $\gamma_{0} \in \mathbb{R}$, where $\gamma>1/2$ is the positive real number introduced in
formula (\ref{defin_F_tzepsilon}) and $L_{\mathfrak{u}} = \mathbb{R}_{+}e^{\sqrt{-1}\mathfrak{u}}$ is a
halfline with direction
$\mathfrak{u} \in \mathbb{R}$. Our prominent goal is the presentation of a related problem satisfied by the
expression $W(u,m,\epsilon)$ that is planned to be solved in the next subsection among the Banach spaces introduced
in the previous subsection.

Overall this subsection, let us assume that the function $W(\tau,m,\epsilon)$ belongs to the Banach space
$E_{(\nu,\beta,\mu,\Gamma,\epsilon)}^{d}$ for some positive real numbers $\nu,\beta>0$, $\mu > 1$ and
$0 \leq \Gamma < \gamma$, with $\epsilon$ belonging to some punctured disc $D(0,\epsilon_{0}) \setminus \{ 0 \}$.
The unbounded sector $U_{d}$ is properly chosen in a way that it avoids the roots
of the polynomial $F_{2}(\tau)$ introduced in the expression (\ref{defin_omega_F}). According to Lemma 4 and
Proposition 2, we can check that the expression $v(t,z,\epsilon)$ given in (\ref{defin_v_Laplace_Fourier}) is
well defined for all $t \in \mathbb{C}$, $\epsilon \in D(0,\epsilon_{0}) \setminus \{ 0 \}$ and
$\mathfrak{u} \in \mathbb{R}$ such that
$$ \mathfrak{u} + \mathrm{arg}(t/\epsilon^{\gamma}) \in (-\pi/2,\pi/2) \ \ , \ \
\mathrm{cos}( \mathfrak{u} + \mathrm{arg}(t/\epsilon^{\gamma}) ) \geq \delta_{1} $$
for some $\delta_{1}>0$, provided that $|t| > \frac{\nu}{\delta_{1}}|\epsilon|^{\gamma - \Gamma}$ and
$z \in H_{\beta}$.

We make the following assumption
\begin{equation}
d_{D} \geq d_{i} \ \ , \ \ d_{D} \geq k_{j} \ \ , \ \ d_{D} \geq b_{k} \ \ , \ \ d_{D} \geq h_{l}
\label{constraints_d_D_di_kj_bk_hl}
\end{equation}
for $1 \leq i \leq D-1$, $1 \leq j \leq q$, $0 \leq k \leq Q$ and $0 \leq l \leq M$. Moreover, the real
numbers $\gamma,\gamma_{0}$ are selected in such a way that
\begin{equation}
\Delta_{D} = \gamma \delta_{D} - \gamma_{0}. \label{constraint_DeltaD_gamma}
\end{equation}
We divide (\ref{main_PDE_u}) by $t^{d_{D}}$ and we focus our attention on the next problem
\begin{multline}
(\sum_{l=1}^{q} a_{l} \epsilon^{m_{l}} t^{k_{l}-d_{D}} +
a_{0}\epsilon^{m_{0}}t^{-d_{D}}) Q(\partial_{z}) v(t,z,\epsilon) \\
+
(\sum_{l=0}^{M} c_{l} \epsilon^{\mu_{l}} t^{h_{l}-d_{D}})Q_{1}(\partial_{z})v(t,z,\epsilon)
Q_{2}(\partial_{z})v(t,z,\epsilon) \\
= \sum_{j=0}^{Q} b_{j}(z) \epsilon^{n_j} t^{b_{j}-d_{D}} + t^{-d_{D}}F^{\theta_{F}}(t,z,\epsilon) +
\epsilon^{\gamma \delta_{D} - \gamma_{0}} \partial_{t}^{\delta_{D}} R_{D}(\partial_{z})v(t,z,\epsilon)\\
+
\sum_{l=1}^{D-1} \epsilon^{\Delta_l} t^{d_{l}-d_{D}} \partial_{t}^{\delta_l} R_{l}(\partial_{z})v(t,z,\epsilon)
\label{main_PDE_u_divided_tdD}
\end{multline}
By means of the identities displayed in Lemma 4 for the classical Laplace transform and in Proposition 2 for the
Fourier inverse transform, we see that $v(t,z,\epsilon)$ given by (\ref{defin_v_Laplace_Fourier})
solves the equation (\ref{main_PDE_u_divided_tdD}) if the related
function $W(\tau,m,\epsilon)$ solves the next nonlinear convolution equation
\begin{multline}
\sum_{l=1}^{q} \frac{a_{l}}{(d_{D} - k_{l} - 1)!}
\frac{\epsilon^{m_{l} + \gamma_{0}}}{\epsilon^{\gamma(d_{D} - k_{l})}} Q(im) \int_{0}^{\tau}
(\tau - s)^{d_{D}-k_{l}-1}W(s,m,\epsilon) ds \\
+ \frac{a_{0}}{(d_{D}-1)!}
\frac{\epsilon^{m_{0} + \gamma_{0}}}{\epsilon^{\gamma d_{D}}}Q(im) \int_{0}^{\tau}
(\tau - s)^{d_{D}-1}W(s,m,\epsilon) ds + \sum_{l=0}^{M}
\frac{c_{l}}{(d_{D}-h_{l}-1)!} \frac{\epsilon^{\mu_{l} + 2\gamma_{0}}}{\epsilon^{\gamma(d_{D} - h_{l})}}\\
\times 
\int_{0}^{\tau} (\tau-s)^{d_{D}-h_{l}-1} \frac{1}{(2\pi)^{1/2}}
\int_{-\infty}^{+\infty} \int_{0}^{s} Q_{1}(i(m-m_{1}))W(s-s',m-m_{1},\epsilon)\\
\times
Q_{2}(im_{1})W(s',m_{1},\epsilon) ds'dm_{1}ds = \sum_{j=0}^{Q}
\frac{1}{(d_{D}-b_{j}-1)!} \frac{\epsilon^{n_j}}{\epsilon^{\gamma(d_{D}-b_{j})}} B_{j}(m)
\tau^{d_{D}-b_{j}-1} \\
+ \Upsilon(\tau,m,\epsilon) + (-\tau)^{\delta_{D}}R_{D}(im)W(\tau,m,\epsilon) +
\sum_{l=1}^{D-1} \frac{1}{(d_{D}-d_{l}-1)!}
\frac{\epsilon^{\Delta_{l} + \gamma_{0}}}{\epsilon^{\gamma( d_{D} - d_{l} + \delta_{l})}} R_{l}(im)\\
\times 
\int_{0}^{\tau} (\tau-s)^{d_{D}-d_{l}-1} (-s)^{\delta_{l}}W(s,m,\epsilon) ds \label{main_conv_eq_W}
\end{multline}
where
$$ \Upsilon(\tau,m,\epsilon) = \frac{1}{(d_{D}-1)!} \frac{\epsilon^{n_{F}}}{\epsilon^{\gamma d_{D}}}
\left( \int_{0}^{\tau} (\tau - s)^{d_{D}-1} \omega_{F}(s,m) ds - 
\tau^{d_{D}-1}\int_{L_{\theta_{F}}} \omega_{F}(u,m)du \right) $$

\subsection{Construction of actual solutions of some auxiliary nonlinear convolution equation with
complex parameter}

The major purpose of this subsection is the construction of a unique solution of the problem
(\ref{main_conv_eq_W}) located in the Banach spaces introduced in Subsection 4.2.

\noindent We first select an unbounded open sector $U_{d}$ with bisecting direction $d \in \mathbb{R}$ taken
in a way that it does not contain any root of the polynomial $F_{2}(\tau)$ appearing in the expression
(\ref{defin_omega_F}).\medskip

\noindent In an initial step, we prove that $\Upsilon(\tau,m,\epsilon)/\tau^{\delta_{D}}R_{D}(im)$ belongs
to $E_{(\nu,\beta,\mu,\Gamma,\epsilon)}^{d}$, for $\beta > 0$ and $\mu>1$ set above in
(\ref{defin_b_j}), for some $\nu>0$ (depending on $K_{F}$, $\Gamma$ and $\epsilon_{0}$),
with $0 \leq \Gamma < \gamma$, for all $\epsilon \in D(0,\epsilon_{0}) \setminus \{ 0 \}$ granting that
\begin{equation}
d_{D} \geq 1 + \delta_{D} \ \ , \ \ \delta_{D} \geq 0 \ \ , \ \ \label{constraint_Upsilon}
n_{F} + \Gamma(d_{D}-1-\delta_{D}) - \gamma d_{D} \geq 0
\end{equation}
hold. As a primary task, we check that the function $\omega_{F}(\tau,m)$ belongs to
$E_{(\nu,\beta,\mu,\Gamma,\epsilon)}^{d}$. Indeed, since $U_{d}$ is taken as above and from the fact that
$\mathrm{deg}(F_{1}) \leq \mathrm{deg}(F_{2})$, we get a constant $C_{F_{1},F_{2}}>0$ with
$$ |\frac{F_{1}(\tau)}{F_{2}(\tau)}| \leq C_{F_{1},F_{2}} $$
for all $U_{d} \cup D(0,\rho)$, for some $\rho>0$ selected small enough. We deduce the next estimates
\begin{multline*}
||\omega_{F}(\tau,m)||_{(\nu,\beta,\mu,\Gamma,\epsilon)} =
\sup_{\tau \in U_{d} \cup \bar{D}(0,\rho),m \in \mathbb{R}} (1 + |m|)^{\mu}
e^{\beta |m|}(1 + |\frac{\tau}{\epsilon^{\Gamma}}|^{2})
\exp( -\nu |\frac{\tau}{\epsilon^{\Gamma}}| )\\
\times |C_{F}(m) e^{-K_{F}\tau}
\frac{F_{1}(\tau)}{F_{2}(\tau)}|\\
\leq ||C_{F}(m)||_{(\beta,\mu)}C_{F_{1},F_{2}} \sup_{\tau \in U_{d} \cup \bar{D}(0,\rho)}
(1 + |\frac{\tau}{\epsilon^{\Gamma}}|^{2}) \exp( -\nu |\frac{\tau}{\epsilon^{\Gamma}}| )
\exp( K_{F}|\epsilon|^{\Gamma} |\frac{\tau}{\epsilon^{\Gamma}}| )\\
\leq ||C_{F}(m)||_{(\beta,\mu)}C_{F_{1},F_{2}} \sup_{x \geq 0} (1 + x^{2})
\exp( (-\nu + K_{F}|\epsilon|^{\Gamma})x )
\end{multline*}
which is finite accepting that $|\epsilon|^{\Gamma} < \nu/K_{F}$. Next in order, from Lemma 6, we get a
constant $B_{2}>0$ (depending on $\delta_{D},d_{D},\nu$) with
\begin{multline}
|| \frac{\epsilon^{n_F}}{\epsilon^{\gamma d_{D}}}
\frac{1}{\tau^{\delta_D}R_{D}(im)}\int_{0}^{\tau} (\tau - s)^{d_{D}-1} \omega_{F}(s,m) ds
||_{(\nu,\beta,\mu,\Gamma,\epsilon)} \\
\leq B_{2} \frac{1}{\mathrm{inf}_{m \in \mathbb{R}}|R_{D}(im)|}
\frac{|\epsilon|^{n_{F}}}{|\epsilon|^{\gamma d_{D}}} |\epsilon^{\Gamma d_{D} - \Gamma \delta_{D}}|
||\omega_{F}(\tau,m)||_{(\nu,\beta,\mu,\Gamma,\epsilon)} \label{bounds_conv_omega_F}
\end{multline}
taking into account that $d_{D} \geq \delta_{D}$ and $\delta_{D} \geq 0$ which follows from
(\ref{constraint_Upsilon}). In order to keep the norm in (\ref{bounds_conv_omega_F}) bounded w.r.t $\epsilon$ near
0, we make the assumption that $n_{F} + \Gamma(d_{D} - \delta_{D}) - \gamma d_{D} \geq 0$ which again 
results from (\ref{constraint_Upsilon}).

Now, we focus on the second piece of $\Upsilon(\tau,m,\epsilon)$. Namely, using Lemma 5, we obtain a
constant $B_{1}>0$ (depending on $d_{D},\delta_{D},\nu$) such that
\begin{multline}
|| \frac{\epsilon^{n_F}}{\epsilon^{\gamma d_{D}}} \frac{ \tau^{d_{D}-1-\delta_{D}} }{R_{D}(im)}
\int_{L_{\theta_{F}}} \omega_{F}(u,m) du ||_{(\nu,\beta,\mu,\Gamma,\epsilon)}
\leq B_{1}| \int_{L_{\theta_{F}}} e^{-K_{F}u} \frac{F_{1}(u)}{F_{2}(u)} du|\\
\times
\frac{ ||C_{F}(m)||_{(\beta,\mu)} }{ \mathrm{inf}_{m \in \mathbb{R}} |R_{D}(im)| }
\frac{|\epsilon|^{n_F}}{|\epsilon|^{\gamma d_{D}}} |\epsilon|^{\Gamma( d_{D} - 1 - \delta_{D})}
\label{bounds_tau_times_integral_omega_F}
\end{multline}
when $d_{D}-1-\delta_{D} \geq 0$ which is part of (\ref{constraint_Upsilon}). Besides, we ask the norm in
(\ref{bounds_tau_times_integral_omega_F}) to be bounded w.r.t $\epsilon$ in the vicinity of the origin which turns
out to be an effect of (\ref{constraint_Upsilon}).\medskip

In the forthcoming proposition, we display suitable conditions under which the main convolution equation
(\ref{main_conv_eq_W}) possesses a unique solution rooted in the Banach space
$E_{(\nu,\beta,\mu,\Gamma,\epsilon)}^{d}$ described in Subsection 4.2, for a convenient choice of its parameters
$\nu,\beta,\mu,\Gamma$ given just above.

\begin{prop} We accredit that the next further constraints hold
\begin{equation}
d_{D} - k_{l} - 1 \geq 0 \ \ , \ \ \delta_{D} \geq 0 \ \ , \ \ \delta_{D} \leq d_{D} - k_{l} \ \ , \ \
m_{l} + \gamma_{0} + (\Gamma - \gamma)(d_{D} - k_{l}) - \Gamma \delta_{D} \geq 0
\label{first_cond_main_conv_eq_W}
\end{equation}
for all $1 \leq l \leq q$,
\begin{equation}
d_{D} \geq 1 \ \ , \ \ \delta_{D} \geq 0 \ \ , \ \ \delta_{D} \leq d_{D} \ \ , \ \
m_{0} + \gamma_{0} + (\Gamma - \gamma)d_{D} - \Gamma \delta_{D} \geq 0 \label{second_cond_main_conv_eq_W},
\end{equation}
\begin{equation}
\delta_{D} \leq d_{D} - h_{l} \ \ , \ \ \delta_{D} \geq 0 \ \ , \ \
\mu_{l} + 2 \gamma_{0} + (\Gamma - \gamma)(d_{D} - h_{l}) - \Gamma(\delta_{D} - 1) \geq 0,
\label{third_cond_main_conv_eq_W}
\end{equation}
for all $0 \leq l \leq M$,
\begin{equation}
d_{D} - b_{j} - 1 \geq \delta_{D} \ \ , \ \ n_{j} - \gamma(d_{D} - b_{j}) +
\Gamma(d_{D} - b{j} - 1 - \delta_{D}) \geq 0 \label{4_cond_main_conv_eq_W} 
\end{equation}
for all $0 \leq j \leq Q$,
\begin{equation}
\delta_{D} \leq d_{D} - d_{l} + \delta_{l} \ \ , \ \ \delta_{D} \geq \delta_{l} \ \ , \ \
\Delta_{l} + \gamma_{0} + (\Gamma - \gamma)(d_{D} - d_{l} + \delta_{l}) - \Gamma \delta_{D} \geq 0
\label{5_cond_main_conv_eq_W}
\end{equation}
as long as $1 \leq l \leq D-1$.

Then, there exist two constants $\varpi_{1}>0$ and $\zeta_{1}>0$ small enough, such that if
\begin{equation}
|a_{i}| \leq \zeta_{1} \ \ , \ \ |c_{j}| \leq \zeta_{1} \ \ , \ \
||B_{k}(m)||_{(\beta,\mu)} \leq \zeta_{1} \ \ , \ \ ||C_{F}(m)||_{(\beta,\mu)} \leq \zeta_{1} \ \ , \ \ 
\sup_{m \in \mathbb{R}} \frac{|R_{l}(im)|}{|R_{D}(im)|} \leq \zeta_{1},
\label{small_coeff_a_c_B_CF_Rl_RD}
\end{equation}
for $0 \leq i \leq q$, $0 \leq j \leq M$, $0 \leq k \leq Q$ and $1 \leq l \leq D-1$,
then, the equation (\ref{main_conv_eq_W}) has a unique solution
$W^{d}(\tau,m,\epsilon)$ stemming from the Banach space $E_{(\nu,\beta,\mu,\Gamma,\epsilon)}^{d}$ which is
governed by the bounds
\begin{equation}
||W^{d}(\tau,m,\epsilon)||_{(\nu,\beta,\mu,\Gamma,\epsilon)} \leq \varpi_{1}
\end{equation}
for all $\epsilon \in D(0,\epsilon_{0}) \setminus \{ 0 \}$, for any directions $d \in \mathbb{R}$ taken in such a 
manner that the sector $U_{d}$ fulfills the constraint proposed at the beginning of this subsection.
\end{prop}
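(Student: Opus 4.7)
\medskip

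\noindent \textbf{Proof plan for Proposition 7.} My strategy mirrors the fixed point procedure used in Proposition 6 but adapted to the Banach spaces $E_{(\nu,\beta,\mu,\Gamma,\epsilon)}^{d}$ of Subsection~4.2. I first isolate on the right hand side of (\ref{main_conv_eq_W}) the dominant linear term $(-\tau)^{\delta_{D}} R_{D}(im) W(\tau,m,\epsilon)$ and divide the whole equation by $-\tau^{\delta_{D}} R_{D}(im)$. This rewrites (\ref{main_conv_eq_W}) as a fixed point equation $\mathcal{H}_{\epsilon}^{1}(W)=W$, where $\mathcal{H}_{\epsilon}^{1}$ is the sum of:\ linear integral operators of the form $\epsilon^{\natural}\tau^{-\delta_{D}}R_{D}(im)^{-1}\int_{0}^{\tau}(\tau-s)^{\gamma_{2}}s^{\gamma_{3}}W(s,m)\,ds$ (coming from the pieces indexed by $l=1,\ldots,q$ and $l=1,\ldots,D-1$), polynomial source terms $\epsilon^{\natural} B_{j}(m)\tau^{d_{D}-b_{j}-1-\delta_{D}}/R_{D}(im)$, the piece $\Upsilon(\tau,m,\epsilon)/(-\tau^{\delta_{D}}R_{D}(im))$ already handled in (\ref{bounds_conv_omega_F})--(\ref{bounds_tau_times_integral_omega_F}), and one quadratic convolution operator featuring an inner Fourier convolution over $m_{1}$.

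Second, I control the norm of each summand of $\mathcal{H}_{\epsilon}^{1}$ in $E_{(\nu,\beta,\mu,\Gamma,\epsilon)}^{d}$ via the three estimates of Subsection~4.2: Lemma~5 handles the polynomial source terms (giving an $|\epsilon|^{\Gamma(d_{D}-b_{j}-1-\delta_{D})}$ factor, finite thanks to (\ref{4_cond_main_conv_eq_W})); Lemma~6 handles the linear convolutions, for which the inequalities $\gamma_{1}\le\gamma_{2}+\gamma_{3}+1$ and $\gamma_{1}\ge\gamma_{3}$ translate exactly into the bounds on $\delta_{D}$ listed in (\ref{first_cond_main_conv_eq_W}), (\ref{second_cond_main_conv_eq_W}) and (\ref{5_cond_main_conv_eq_W}); and Lemma~7 handles the bilinear convolution (using $\mu>\max(\deg(Q_{1})+1,\deg(Q_{2})+1)$), producing an $|\epsilon|^{\Gamma}$ factor combined with the prefactor of (\ref{third_cond_main_conv_eq_W}). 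In every case the net power of $|\epsilon|$ that results is nonnegative by virtue of the four batches of constraints (\ref{first_cond_main_conv_eq_W})--(\ref{5_cond_main_conv_eq_W}), so that each contribution is $O(|\epsilon|^{0})$ uniformly on $D(0,\epsilon_{0})\setminus\{0\}$.

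Third, I verify the two defining properties of a shrinking map on the closed ball $\bar{B}(0,\varpi_{1})\subset E_{(\nu,\beta,\mu,\Gamma,\epsilon)}^{d}$, in complete analogy with Lemma~3. For the stability $\mathcal{H}_{\epsilon}^{1}(\bar{B}(0,\varpi_{1}))\subset \bar{B}(0,\varpi_{1})$, I add up the bounds produced in the second step, picking up factors $\varpi_{1}$ from the linear operators, $\varpi_{1}^{2}$ from the quadratic one, and constants from the source and $\Upsilon$ terms. The smallness hypotheses (\ref{small_coeff_a_c_B_CF_Rl_RD}) on $|a_{i}|, |c_{j}|, \|B_{k}\|_{(\beta,\mu)}, \|C_{F}\|_{(\beta,\mu)}$ and $\sup_{m}|R_{l}(im)/R_{D}(im)|$ allow one to absorb all the constants coming from Lemmas~5--7, so choosing $\zeta_{1}$ and $\varpi_{1}$ small enough (and $\epsilon_{0}$ small enough) makes the total bound not exceed $\varpi_{1}$. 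For the $1/2$-Lipschitz estimate, one applies the linearity of the linear parts and the standard factorization of a difference of products analogous to (\ref{factor_Q1w1Q2w1_minus_Q1w2Q2w2}) for the bilinear term, then reuses the very same norm bounds.

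Finally, I invoke the classical Banach fixed point theorem on the complete metric space $\bar{B}(0,\varpi_{1})$ to obtain a unique $W^{d}(\tau,m,\epsilon)$ with $\mathcal{H}_{\epsilon}^{1}(W^{d})=W^{d}$; undoing the division by $-\tau^{\delta_{D}}R_{D}(im)$ shows that $W^{d}$ solves (\ref{main_conv_eq_W}). The main technical obstacle is the bookkeeping of exponents: matching the powers of $\tau$ and the Gevrey parameters appearing in each term of (\ref{main_conv_eq_W}) with the pair $(\gamma_{1},\gamma_{2},\gamma_{3})$ of Lemma~6, and then checking that each of the four batches (\ref{first_cond_main_conv_eq_W})--(\ref{5_cond_main_conv_eq_W}) provides simultaneously the admissibility hypothesis of that lemma \emph{and} a nonnegative residual power of $|\epsilon|$; once this verification is done for every summand, the contraction and stability estimates follow by a routine but lengthy summation.
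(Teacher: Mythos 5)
Your proposal is correct and follows essentially the same route as the paper: divide by $(-\tau)^{\delta_D}R_D(im)$ to reach the fixed point map (the paper calls it $\mathcal{G}_{\epsilon}$ and isolates it as Lemma~8), bound each summand using Lemma~5, Lemma~6 and Lemma~7 together with the $\Upsilon$ estimates, read the hypotheses (\ref{first_cond_main_conv_eq_W})--(\ref{5_cond_main_conv_eq_W}) off from the admissibility conditions $\gamma_1\le\gamma_2+\gamma_3+1$, $\gamma_1\ge\gamma_3$ of Lemma~6 plus nonnegativity of the residual $\epsilon$-power, use the smallness of the coefficients in (\ref{small_coeff_a_c_B_CF_Rl_RD}) to absorb the remaining constants, and conclude with the Banach fixed point theorem on $\bar{B}(0,\varpi_1)$.
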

\begin{proof} We depart from a lemma that aims attention at a shrinking map acting on the Banach spaces quoted above
and downsizes our main convolution problem to the existence and unicity of a fixed point for this map.

\begin{lemma} Taking for granted the constraints (\ref{first_cond_main_conv_eq_W}),
(\ref{second_cond_main_conv_eq_W}), (\ref{third_cond_main_conv_eq_W}), (\ref{4_cond_main_conv_eq_W}),
(\ref{5_cond_main_conv_eq_W}) presented above, one can adjust a constant $\varpi_{1}>0$
small enough and a constant $\zeta_{1}>0$ taken in a way that if the smallness condition
(\ref{small_coeff_a_c_B_CF_Rl_RD}) hold, then for all $\epsilon \in D(0,\epsilon_{0}) \setminus \{ 0 \}$, the map
$\mathcal{G}_{\epsilon}$ prescribed as
\begin{multline}
\mathcal{G}_{\epsilon}(w(\tau,m)) := \sum_{l=1}^{q} \frac{a_{l}}{(d_{D} - k_{l} - 1)!}
\frac{\epsilon^{m_{l} + \gamma_{0}}}{\epsilon^{\gamma(d_{D} - k_{l})}}
\frac{Q(im)}{R_{D}(im) (-\tau)^{\delta_{D}} } \int_{0}^{\tau}
(\tau - s)^{d_{D}-k_{l}-1}w(s,m) ds \\
+ \frac{a_{0}}{(d_{D}-1)!}
\frac{\epsilon^{m_{0} + \gamma_{0}}}{\epsilon^{\gamma d_{D}}}
\frac{Q(im)}{R_{D}(im) (-\tau)^{\delta_D}} \int_{0}^{\tau}
(\tau - s)^{d_{D}-1}w(s,m) ds\\
+ \sum_{l=0}^{M}
\frac{c_{l}}{(d_{D}-h_{l}-1)!} \frac{\epsilon^{\mu_{l} + 2\gamma_{0}}}{\epsilon^{\gamma(d_{D} - h_{l})}}\\
\times \frac{1}{R_{D}(im)(-\tau)^{\delta_D}}
\int_{0}^{\tau} (\tau-s)^{d_{D}-h_{l}-1} \frac{1}{(2\pi)^{1/2}}
\int_{-\infty}^{+\infty} \int_{0}^{s} Q_{1}(i(m-m_{1}))w(s-s',m-m_{1})\\
\times
Q_{2}(im_{1})w(s',m_{1}) ds' dm_{1} ds\\
- \sum_{j=0}^{Q}
\frac{1}{(d_{D}-b_{j}-1)!} \frac{\epsilon^{n_j}}{\epsilon^{\gamma(d_{D}-b_{j})}} \frac{B_{j}(m)}{R_{D}(im)}
\frac{\tau^{d_{D}-b_{j}-1}}{(-\tau)^{\delta_{D}}}
-\frac{\Upsilon(\tau,m,\epsilon)}{R_{D}(im)(-\tau)^{\delta_D}}\\
-\sum_{l=1}^{D-1} \frac{1}{(d_{D}-d_{l}-1)!}
\frac{\epsilon^{\Delta_{l} + \gamma_{0}}}{\epsilon^{\gamma( d_{D} - d_{l} + \delta_{l})}} 
\frac{R_{l}(im)}{R_{D}(im)}\frac{1}{(-\tau)^{\delta_D}}\\
\times 
\int_{0}^{\tau} (\tau-s)^{d_{D}-d_{l}-1} (-s)^{\delta_{l}}w(s,m) ds
\end{multline}
undergo the next properties.\\
{\bf i)} The next inclusion
\begin{equation}
\mathcal{G}_{\epsilon}( \bar{B}(0,\varpi_{1}) ) \subset \bar{B}(0,\varpi_{1}) \label{G_epsilon_inclusion} 
\end{equation}
takes place, where $\bar{B}(0,\varpi_{1})$ stands for the closed ball of radius $\varpi_{1}$ centered at 0
in the space $E_{(\nu,\beta,\mu,\Gamma,\epsilon)}^{d}$, for any $\epsilon \in D(0,\epsilon_{0}) \setminus \{ 0 \}$.\\
{\bf ii)} The ensuing shrinking constraint
\begin{equation}
|| \mathcal{G}_{\epsilon}(w_{1}) - \mathcal{G}_{\epsilon}(w_{2}) ||_{(\nu,\beta,\mu,\Gamma,\epsilon)}
\leq \frac{1}{2} ||w_{1} - w_{2}||_{(\nu,\beta,\mu,\Gamma,\epsilon)} \label{G_epsilon_shrinking}
\end{equation}
holds for all $w_{1},w_{2} \in \bar{B}(0,\varpi_{1})$, all $\epsilon \in D(0,\epsilon_{0}) \setminus \{ 0 \}$.
\end{lemma}
\begin{proof} Foremost, we focus on the first property (\ref{G_epsilon_inclusion}). Namely, let
$\epsilon \in D(0,\epsilon_{0}) \setminus \{ 0 \}$ and consider $w(\tau,m) \in
E_{(\nu,\beta,\mu,\Gamma,\epsilon)}^{d}$. We take $\varpi_{1}>0$ with
$||w(\tau,m)||_{(\nu,\beta,\mu,\Gamma,\epsilon)} \leq \varpi_{1}$.\\
Bearing in mind Lemma 6, we get a constant $B_{2}>0$ (depending on
$\nu,\delta_{D},d_{D},k_{l}$, for $1 \leq l \leq q$) with
\begin{multline}
|| \frac{\epsilon^{m_{l} + \gamma_{0}}}{\epsilon^{\gamma(d_{D} - k_{l})}}
\frac{Q(im)}{R_{D}(im) \tau^{\delta_{D}} } \int_{0}^{\tau}
(\tau - s)^{d_{D}-k_{l}-1}w(s,m) ds ||_{(\nu,\beta,\mu,\Gamma,\epsilon)}\\
\leq B_{2} \sup_{m \in \mathbb{R}} \frac{|Q(im)|}{|R_{D}(im)|}
\frac{|\epsilon|^{m_{l} + \gamma_{0}}}{|\epsilon|^{\gamma(d_{D} - k_{l})}} |\epsilon|^{\Gamma( d_{D} - k_{l})
- \Gamma \delta_{D} } \varpi_{1} \label{first_bound_G_inclusion}
\end{multline}
for all $1 \leq l \leq q$, submitted to (\ref{first_cond_main_conv_eq_W}). Likewise, we get a constant
$B_{2}>0$ (depending on $\nu,\delta_{D},d_{D}$) with
\begin{multline}
|| \frac{\epsilon^{m_{0} + \gamma_{0}}}{\epsilon^{\gamma d_{D}}}
\frac{Q(im)}{R_{D}(im) \tau^{\delta_{D}} } \int_{0}^{\tau}
(\tau - s)^{d_{D}-1}w(s,m) ds ||_{(\nu,\beta,\mu,\Gamma,\epsilon)}\\
\leq B_{2} \sup_{m \in \mathbb{R}} \frac{|Q(im)|}{|R_{D}(im)|}
\frac{|\epsilon|^{m_{0} + \gamma_{0}}}{|\epsilon|^{\gamma d_{D} }} |\epsilon|^{\Gamma d_{D}
- \Gamma \delta_{D} } \varpi_{1} \label{second_bound_G_inclusion}
\end{multline}
counting on (\ref{second_cond_main_conv_eq_W}). Now, we put
$$ h(\tau,m) = \frac{1}{R_{D}(im)} \int_{0}^{\tau}
\int_{-\infty}^{+\infty} Q_{1}(i(m-m_{1}))w( \tau - s',m-m_{1})Q_{2}(im_{1})w(s',m_{1}) ds' dm_{1}. $$
From Lemma 7, under the constraint (\ref{constraints_degree_coeff_Q_Rl}),
we get a constant $B_{3}>0$ (depending on $\mu,Q_{1},Q_{2},R_{D}$) such that
$$ ||h(\tau,m)||_{(\nu,\beta,\mu,\Gamma,\epsilon)} \leq B_{3}|\epsilon|^{\Gamma}
||w(\tau,m)||_{(\nu,\beta,\mu,\Gamma,\epsilon)}^{2}
$$
In accordance with Lemma 6, we deduce a constant $B_{2}>0$ (depending on $\nu,\delta_{D},d_{D},h_{l}$ for
$0 \leq l \leq M$) with
\begin{multline}
||  \frac{\epsilon^{\mu_{l} + 2\gamma_{0}}}{\epsilon^{\gamma(d_{D} - h_{l})}}
\frac{1}{\tau^{\delta_{D}}} \int_{0}^{\tau} (\tau - s)^{d_{D} - h_{l} - 1} h(s,m) ds
||_{(\nu,\beta,\mu,\Gamma,\epsilon)}\\
\leq B_{2} \frac{|\epsilon|^{\mu_{l} + 2\gamma_{0}}}{|\epsilon|^{\gamma(d_{D} - h_{l})}}
|\epsilon|^{\Gamma(d_{D} - h_{l}) - \Gamma \delta_{D}}
||h(\tau,m)||_{(\nu,\beta,\mu,\Gamma,\epsilon)} \leq
B_{2}B_{3}\frac{|\epsilon|^{\mu_{l} + 2\gamma_{0}}}{|\epsilon|^{\gamma(d_{D} - h_{l})}}
|\epsilon|^{\Gamma(d_{D} - h_{l}) - \Gamma \delta_{D}} |\epsilon|^{\Gamma} \varpi_{1}^{2}
\label{3_bound_G_inclusion}
\end{multline}
in agreement with (\ref{third_cond_main_conv_eq_W}). Hereafter, we concentrate on the inhomogeneous terms. Namely,
according to Lemma 5, we get a constant $B_{1}>0$ (depending on $\nu,d_{D},\delta_{D},b_{j}$ for
$0 \leq j \leq Q$) such that
\begin{multline}
|| \frac{\epsilon^{n_j}}{\epsilon^{\gamma(d_{D}-b_{j})}} \frac{B_{j}(m)}{R_{D}(im)}
\frac{\tau^{d_{D}-b_{j}-1}}{\tau^{\delta_{D}}} ||_{(\nu,\beta,\mu,\Gamma,\epsilon)}\\
\leq B_{1} \sup_{m \in \mathbb{R}} |\frac{1}{R_{D}(im)}|
||B_{j}(m)||_{(\beta,\mu)} \frac{|\epsilon|^{n_j}}{|\epsilon|^{\gamma(d_{D}-b_{j})}}
|\epsilon|^{\Gamma(d_{D} - b_{j} - 1 - \delta_{D})} \label{4_bound_G_inclusion}
\end{multline}
taking notice that (\ref{4_cond_main_conv_eq_W}) happens. On the other hand, under the assumptions 
(\ref{constraint_Upsilon}), we have seen above that there exists a constant $B_{\Upsilon}>0$ (depending
on $\omega_{F},\theta_{F},\nu,\Gamma,\gamma,n_{F},d_{D},\delta_{D},R_{D}(im)$) for which
\begin{equation}
||\frac{\Upsilon(\tau,m,\epsilon)}{R_{D}(im)\tau^{\delta_D}}||_{(\nu,\beta,\mu,\Gamma,\epsilon)} \leq
B_{\Upsilon} ||C_{F}(m)||_{(\beta,\mu)} \label{4_bis_bound_G_inclusion}
\end{equation}
for all $\epsilon \in D(0,\epsilon_{0}) \setminus \{ 0 \}$. At last, we provide estimates for the remaining
convolution terms. Specifically, Lemma 6 yields a constant $B_{2}>0$ (depending on
$\nu,\delta_{D},d_{D},d_{l},\delta_{l}$ for $1 \leq l \leq D-1$) such that
\begin{multline}
||\frac{\epsilon^{\Delta_{l} + \gamma_{0}}}{\epsilon^{\gamma( d_{D} - d_{l} + \delta_{l})}} 
\frac{R_{l}(im)}{R_{D}(im)}\frac{1}{\tau^{\delta_D}}
\int_{0}^{\tau} (\tau-s)^{d_{D}-d_{l}-1} s^{\delta_{l}}w(s,m)
ds||_{(\nu,\beta,\mu,\Gamma,\epsilon)} \\
\leq B_{2}
\sup_{m \in \mathbb{R}} \frac{|R_{l}(im)|}{|R_{D}(im)|}
\frac{|\epsilon|^{\Delta_{l} + \gamma_{0}}}{|\epsilon|^{\gamma( d_{D} - d_{l} + \delta_{l})}}
|\epsilon|^{\Gamma( d_{D} - d_{l} + \delta_{l} ) - \Gamma \delta_{D} }\varpi_{1}
\label{5_bound_G_inclusion}
\end{multline}
for $1 \leq l \leq D-1$, under the requirement that (\ref{5_cond_main_conv_eq_W}) holds. Finally, we select
both $\varpi_{1}>0$ and $\zeta_{1}>0$ satisfying (\ref{small_coeff_a_c_B_CF_Rl_RD}) in such a way that
\begin{multline}
\sum_{l=1}^{q} \frac{|a_{l}|}{(d_{D} - k_{l} - 1)!}B_{2} \sup_{m \in \mathbb{R}} \frac{|Q(im)|}{|R_{D}(im)|}
\frac{\epsilon_{0}^{m_{l} + \gamma_{0}}}{\epsilon_{0}^{\gamma(d_{D} - k_{l})}} \epsilon_{0}^{\Gamma( d_{D} - k_{l})
- \Gamma \delta_{D} } \varpi_{1} \\
+ \frac{|a_{0}|}{(d_{D}-1)!}B_{2} \sup_{m \in \mathbb{R}}
\frac{|Q(im)|}{|R_{D}(im)|}
\frac{\epsilon_{0}^{m_{0} + \gamma_{0}}}{\epsilon_{0}^{\gamma d_{D} }} \epsilon_{0}^{\Gamma d_{D}
- \Gamma \delta_{D} } \varpi_{1} \\
+ \sum_{l=0}^{M}
\frac{|c_{l}|}{(d_{D}-h_{l}-1)!(2 \pi)^{1/2}}B_{2}B_{3}
\frac{\epsilon_{0}^{\mu_{l} + 2\gamma_{0}}}{\epsilon_{0}^{\gamma(d_{D} - h_{l})}}
\epsilon_{0}^{\Gamma(d_{D} - h_{l}) - \Gamma \delta_{D}} \epsilon_{0}^{\Gamma} \varpi_{1}^{2}\\
+ \sum_{j=0}^{Q}
\frac{1}{(d_{D}-b_{j}-1)!} B_{1} \sup_{m \in \mathbb{R}} |\frac{1}{R_{D}(im)}|
||B_{j}(m)||_{(\beta,\mu)} \frac{\epsilon_{0}^{n_j}}{\epsilon_{0}^{\gamma(d_{D}-b_{j})}}
\epsilon_{0}^{\Gamma(d_{D} - b_{j} - 1 - \delta_{D})} \\
+ B_{\Upsilon} ||C_{F}(m)||_{(\beta,\mu)} + \sum_{l=1}^{D-1} \frac{1}{(d_{D}-d_{l}-1)!}
B_{2} \sup_{m \in \mathbb{R}} \frac{|R_{l}(im)|}{|R_{D}(im)|}\\
\times \frac{\epsilon_{0}^{\Delta_{l} + \gamma_{0}}}{\epsilon_{0}^{\gamma( d_{D} - d_{l} + \delta_{l})}}
\epsilon_{0}^{\Gamma( d_{D} - d_{l} + \delta_{l} ) - \Gamma \delta_{D} }\varpi_{1} \leq \varpi_{1}.
\label{G_epsilon_inclusion_condition}
\end{multline}
From the very definition of $\mathcal{G}_{\epsilon}$, by compiling the bounds (\ref{first_bound_G_inclusion}),
(\ref{second_bound_G_inclusion}), (\ref{3_bound_G_inclusion}), (\ref{4_bound_G_inclusion}),
(\ref{4_bis_bound_G_inclusion}), (\ref{5_bound_G_inclusion}), we recover the inclusion announced in
(\ref{G_epsilon_inclusion}).\medskip

In the next part of the proof, we target the shrinking restriction (\ref{G_epsilon_shrinking}). Namely, let us
choose $w_{1}(\tau,m)$ and $w_{2}(\tau,m)$ in the space $E_{(\nu,\beta,\mu,\Gamma,\epsilon)}^{d}$ inside
the ball $\bar{B}(0,\varpi_{1})$.

Ahead in position, according to the bounds (\ref{first_bound_G_inclusion}), (\ref{second_bound_G_inclusion})
and (\ref{5_bound_G_inclusion}), we get a constant $B_{2}>0$ (depending on $\nu,\delta_{D},d_{D},k_{l}$
for $1 \leq l \leq q$ and $d_{l},\delta_{l}$ for $1 \leq l \leq D-1$) for which
\begin{multline}
|| \frac{\epsilon^{m_{l} + \gamma_{0}}}{\epsilon^{\gamma(d_{D} - k_{l})}}
\frac{Q(im)}{R_{D}(im) \tau^{\delta_{D}} } \int_{0}^{\tau}
(\tau - s)^{d_{D}-k_{l}-1}(w_{1}(s,m) - w_{2}(s,m)) ds ||_{(\nu,\beta,\mu,\Gamma,\epsilon)}\\
\leq B_{2} \sup_{m \in \mathbb{R}} \frac{|Q(im)|}{|R_{D}(im)|}
\frac{|\epsilon|^{m_{l} + \gamma_{0}}}{|\epsilon|^{\gamma(d_{D} - k_{l})}} |\epsilon|^{\Gamma( d_{D} - k_{l})
- \Gamma \delta_{D} } ||w_{1}(\tau,m) - w_{2}(\tau,m)||_{(\nu,\beta,\mu,\Gamma,\epsilon)}
\label{first_bound_G_shrink}
\end{multline}
holds for all $1 \leq l \leq q$, together with
\begin{multline}
|| \frac{\epsilon^{m_{0} + \gamma_{0}}}{\epsilon^{\gamma d_{D}}}
\frac{Q(im)}{R_{D}(im) \tau^{\delta_{D}} } \int_{0}^{\tau}
(\tau - s)^{d_{D}-1}(w_{1}(s,m) - w_{2}(s,m)) ds ||_{(\nu,\beta,\mu,\Gamma,\epsilon)}\\
\leq B_{2} \sup_{m \in \mathbb{R}} \frac{|Q(im)|}{|R_{D}(im)|}
\frac{|\epsilon|^{m_{0} + \gamma_{0}}}{|\epsilon|^{\gamma d_{D} }} |\epsilon|^{\Gamma d_{D}
- \Gamma \delta_{D} } ||w_{1}(\tau,m) - w_{2}(\tau,m)||_{(\nu,\beta,\mu,\Gamma,\epsilon)} \label{second_bound_G_shrink}
\end{multline}
and
\begin{multline}
||\frac{\epsilon^{\Delta_{l} + \gamma_{0}}}{\epsilon^{\gamma( d_{D} - d_{l} + \delta_{l})}} 
\frac{R_{l}(im)}{R_{D}(im)}\frac{1}{\tau^{\delta_D}}
\int_{0}^{\tau} (\tau-s)^{d_{D}-d_{l}-1} s^{\delta_{l}}(w_{1}(s,m) - w_{2}(s,m))
ds||_{(\nu,\beta,\mu,\Gamma,\epsilon)} \\
\leq B_{2}
\sup_{m \in \mathbb{R}} \frac{|R_{l}(im)|}{|R_{D}(im)|}
\frac{|\epsilon|^{\Delta_{l} + \gamma_{0}}}{|\epsilon|^{\gamma( d_{D} - d_{l} + \delta_{l})}}
|\epsilon|^{\Gamma( d_{D} - d_{l} + \delta_{l} ) - \Gamma \delta_{D} }
||w_{1}(\tau,m) - w_{2}(\tau,m)||_{(\nu,\beta,\mu,\Gamma,\epsilon)}
\label{3_bound_G_shrink}
\end{multline}
for $1 \leq l \leq D-1$. We concentrate now on the nonlinear part of $\mathcal{G}_{\epsilon}$. In a similar way
as we have proceed for the map $\mathcal{H}_{\epsilon}$ in the proof of Lemma 3, we may write as a preparation
the next identity
\begin{multline}
Q_{1}(i(m-m_{1}))w_{1}(s-s',m-m_{1})Q_{2}(im_{1})w_{1}(s',m_{1})\\
- Q_{1}(i(m-m_{1}))w_{2}(s-s',m-m_{1})Q_{2}(im_{1})w_{2}(s',m_{1})\\
= Q_{1}(i(m-m_{1}))
\left( w_{1}(s-s',m-m_{1}) - w_{2}(s-s',m-m_{1}) \right)
Q_{2}(im_{1})w_{1}(s',m_{1}) \\
+ Q_{1}(i(m-m_{1}))w_{2}(s-s',m-m_{1})
Q_{2}(im_{1}) \left( w_{1}(s',m_{1}) - w_{2}(s',m_{1}) \right)
\label{factor_Q1w1Q2w1_minus_Q1w2Q2w2_for_G_epsilon}
\end{multline}
For $j=1,2$, we assign
$$ h_{j}(\tau,m) = \frac{1}{R_{D}(im)} \int_{0}^{\tau}
\int_{-\infty}^{+\infty} Q_{1}(i(m-m_{1}))w_{j}( \tau - s',m-m_{1})Q_{2}(im_{1})w_{j}(s',m_{1}) ds' dm_{1}. $$
Keeping in view the latter factorization (\ref{factor_Q1w1Q2w1_minus_Q1w2Q2w2_for_G_epsilon}), accordingly to
Lemma 7, under the assumption (\ref{constraints_degree_coeff_Q_Rl}), we get a constant $B_{3}>0$
(depending on $\mu,Q_{1},Q_{2},R_{D}$) with
\begin{multline*}
||h_{1}(\tau,m) - h_{2}(\tau,m)||_{(\nu,\beta,\mu,\Gamma,\epsilon)} \leq
B_{3}|\epsilon|^{\Gamma}||w_{1}(\tau,m) - w_{2}(\tau,m)||_{(\nu,\beta,\mu,\Gamma,\epsilon)}\\
\times
\left( ||w_{1}(\tau,m)||_{(\nu,\beta,\mu,\Gamma,\epsilon)} +
||w_{2}(\tau,m)||_{(\nu,\beta,\mu,\Gamma,\epsilon)} \right).
\end{multline*}
As a result, with the help of the first inequality of (\ref{3_bound_G_inclusion}), we can select a constant
$B_{2}>0$ (depending on $\nu,\delta_{D},d_{D},h_{l}$ for $0 \leq l \leq M$) such that
\begin{multline}
||  \frac{\epsilon^{\mu_{l} + 2\gamma_{0}}}{\epsilon^{\gamma(d_{D} - h_{l})}}
\frac{1}{\tau^{\delta_{D}}} \int_{0}^{\tau} (\tau - s)^{d_{D} - h_{l} - 1} (h_{1}(s,m) - h_{2}(s,m)) ds
||_{(\nu,\beta,\mu,\Gamma,\epsilon)}\\
\leq B_{2} \frac{|\epsilon|^{\mu_{l} + 2\gamma_{0}}}{|\epsilon|^{\gamma(d_{D} - h_{l})}}
|\epsilon|^{\Gamma(d_{D} - h_{l}) - \Gamma \delta_{D}}
||h_{1}(\tau,m) - h_{2}(\tau,m)||_{(\nu,\beta,\mu,\Gamma,\epsilon)} \\
\leq
B_{2}B_{3}\frac{|\epsilon|^{\mu_{l} + 2\gamma_{0}}}{|\epsilon|^{\gamma(d_{D} - h_{l})}}
|\epsilon|^{\Gamma(d_{D} - h_{l}) - \Gamma \delta_{D}} |\epsilon|^{\Gamma}
||w_{1}(\tau,m) - w_{2}(\tau,m)||_{(\nu,\beta,\mu,\Gamma,\epsilon)}\\
\times
\left( ||w_{1}(\tau,m)||_{(\nu,\beta,\mu,\Gamma,\epsilon)} +
||w_{2}(\tau,m)||_{(\nu,\beta,\mu,\Gamma,\epsilon)} \right).
\label{4_bound_G_shrink}
\end{multline}
We adjust both $\varpi_{1}>0$ and $\zeta_{1}>0$ with (\ref{small_coeff_a_c_B_CF_Rl_RD}) in a manner that
\begin{multline}
\sum_{l=1}^{q} \frac{|a_{l}|}{(d_{D} - k_{l} - 1)!}B_{2} \sup_{m \in \mathbb{R}} \frac{|Q(im)|}{|R_{D}(im)|}
\frac{\epsilon_{0}^{m_{l} + \gamma_{0}}}{\epsilon_{0}^{\gamma(d_{D} - k_{l})}} \epsilon_{0}^{\Gamma( d_{D} - k_{l})
- \Gamma \delta_{D} } \\
+ \frac{|a_{0}|}{(d_{D}-1)!}B_{2} \sup_{m \in \mathbb{R}}
\frac{|Q(im)|}{|R_{D}(im)|}
\frac{\epsilon_{0}^{m_{0} + \gamma_{0}}}{\epsilon_{0}^{\gamma d_{D} }} \epsilon_{0}^{\Gamma d_{D}
- \Gamma \delta_{D} } \\
+ \sum_{l=0}^{M}
\frac{|c_{l}|}{(d_{D}-h_{l}-1)!(2 \pi)^{1/2}}B_{2}B_{3}
\frac{\epsilon_{0}^{\mu_{l} + 2\gamma_{0}}}{\epsilon_{0}^{\gamma(d_{D} - h_{l})}}
\epsilon_{0}^{\Gamma(d_{D} - h_{l}) - \Gamma \delta_{D}} \epsilon_{0}^{\Gamma} 2 \varpi_{1}\\
+ \sum_{l=1}^{D-1} \frac{1}{(d_{D}-d_{l}-1)!}
B_{2} \sup_{m \in \mathbb{R}} \frac{|R_{l}(im)|}{|R_{D}(im)|}\\
\times \frac{\epsilon_{0}^{\Delta_{l} + \gamma_{0}}}{\epsilon_{0}^{\gamma( d_{D} - d_{l} + \delta_{l})}}
\epsilon_{0}^{\Gamma( d_{D} - d_{l} + \delta_{l} ) - \Gamma \delta_{D} } \leq 1/2.
\label{G_epsilon_shrink_condition}
\end{multline}
By grouping the above estimates (\ref{first_bound_G_shrink}), (\ref{second_bound_G_shrink}),
(\ref{3_bound_G_shrink}) and (\ref{4_bound_G_shrink}), we are led to the shrinking constraints
(\ref{G_epsilon_shrinking}).

In order to complete the proof, let us allow the two conditions
(\ref{G_epsilon_inclusion_condition}) and (\ref{G_epsilon_shrink_condition}) to mutually occur for well selected
$\varpi_{1}>0$ and $\zeta_{1}>0$. Then, both
(\ref{G_epsilon_inclusion}) and (\ref{G_epsilon_shrinking}) are conjointly verified.
\end{proof}
Take the ball $\bar{B}(0,\varpi_{1}) \subset E_{(\nu,\beta,\mu,\Gamma,\epsilon)}^{d}$ just
built above in Lemma 8 which furnishes a complete
metric space endowed with the norm $||.||_{(\nu,\beta,\mu,\Gamma,\epsilon)}$. From the lemma above, we get
that $\mathcal{G}_{\epsilon}$ is a
contractive map from $\bar{B}(0,\varpi_{1})$ into itself. Due to the classical contractive mapping theorem, we
deduce that
the map $\mathcal{G}_{\epsilon}$ has a unique fixed point denoted $W^{d}(\tau,m,\epsilon)$
in the ball $\bar{B}(0,\varpi_{1})$, meaning that
\begin{equation}
\mathcal{G}_{\epsilon}(W^{d}(\tau,m,\epsilon)) = W^{d}(\tau,m,\epsilon)
\label{G_epsilon_fix_point_eq}
\end{equation}
for a unique solution $W^{d}(\tau,m,\epsilon) \in E_{(\nu,\beta,\mu,\Gamma,\epsilon)}^{d}$ such
that  
$||W^{d}(\tau,m,\epsilon)||_{(\nu,\beta,\mu,\Gamma,\epsilon)} \leq \varpi_{1}$,
for all $\epsilon \in D(0,\epsilon_{0}) \setminus \{ 0 \}$. Moreover, the function
$W^{d}(\tau,m,\epsilon)$ depends holomorphically on $\epsilon$ in
$D(0,\epsilon_{0}) \setminus \{ 0 \}$.

If one sets apart the term $(-\tau)^{\delta_{D}}R_{D}(im)W(\tau,m,\epsilon)$ in the right handside of
(\ref{main_conv_eq_W}), we notice that (\ref{main_conv_eq_W}) can be scaled down to the equation
(\ref{G_epsilon_fix_point_eq}) above using a mere division by $(-\tau)^{\delta_D}R_{D}(im)$. As a result,
the unique fixed point $W^{d}(\tau,m,\epsilon)$ of $\mathcal{G}_{\epsilon}$ in
$\bar{B}(0,\varpi_{1})$ precisely solves the problem (\ref{main_conv_eq_W}). The proposition 7 follows.
\end{proof}

\subsection{Analytic solutions to the main problem on large $\epsilon-$depending sectorial domains in time}

We go back to the speculative solutions to the main equation (\ref{main_PDE_u}) displayed in Section 4.3 under the
new light shed on the related nonlinear convolution equation (\ref{main_conv_eq_W}) in Section 4.4.
We first provide the definition of the set of $\epsilon-$depending associated sector and directions to a
good covering.

\begin{defin} Let $\iota \geq 2$ be an integer. For all $0 \leq j \leq \iota-1$, we consider an open sector
$\mathcal{E}_{j}^{\infty}$ centered at 0, with radius $\epsilon_{0}^{\infty}>0$ and opening
$\frac{\pi}{\gamma} + \xi_{j}<2\pi$ for some real number $\xi_{j}>0$. We assume that the family
$\{ \mathcal{E}_{j}^{\infty} \}_{0 \leq j \leq \iota-1}$ forms a good covering in $\mathbb{C}^{\ast}$ with aperture
$\pi/\gamma$. For all $0 \leq j \leq \iota-1$, let $\mathfrak{u}_{j}$ be a real number belonging to
$(-\pi/2,\pi/2)$ sorted in a way
that there exists an unbounded sector $U_{\mathfrak{u}_{j}}$ centered at 0 with bisecting direction
$\mathfrak{u}_{j}$ and appropriate aperture not containing any root of the
polynomial $F_{2}(\tau)$ introduced in the formula (\ref{defin_omega_F}). Let $\nu>0$ be fixed as above at the
beginning of Section 4.4.

We assume that one can select a real number $\delta_{1}^{\infty}>0$ and
$\Delta_{\nu,\delta_{1}^{\infty}} > \nu/ \delta_{1}^{\infty}$ such that for all $0 \leq j \leq \iota-1$, all
$\epsilon \in \mathcal{E}_{j}^{\infty}$, all $t \in \mathcal{T}_{\epsilon}^{\infty}$, where
$$ \mathcal{T}_{\epsilon}^{\infty} = \{ t \in \mathbb{C} /
|t| > \Delta_{\nu,\delta_{1}^{\infty}}|\epsilon|^{\gamma - \Gamma} \ \ , \ \
\alpha_{\infty} < \mathrm{arg}(t) < \beta_{\infty} \} $$
there exists some direction $\mathfrak{u}_{j}^{\Delta} \in \mathbb{R}$ (that may depend on
$\epsilon$ and $t$) with $\exp(i \mathfrak{u}_{j}^{\Delta} ) \in U_{\mathfrak{u}_{j}}$ that satisfies the next
requirement
$$ \mathfrak{u}_{j}^{\Delta} + \mathrm{arg}(\frac{t}{\epsilon^{\gamma}}) \in (-\frac{\pi}{2},\frac{\pi}{2}) \ \ , \ \
\cos( \mathfrak{u}_{j}^{\Delta} + \mathrm{arg}(\frac{t}{\epsilon^{\gamma}}) ) \geq \delta_{1}^{\infty},$$
for suitable fixed angles $\alpha_{\infty} < \beta_{\infty}$.

If the above constraints hold, we claim that the family of $\epsilon-$depending sector and directions
$\{ \mathcal{T}_{\epsilon}^{\infty}, \{ \mathfrak{u}_{j} \}_{0 \leq j \leq \iota-1} \}$ is associated to the
good covering $\{ \mathcal{E}_{j}^{\infty} \}_{0 \leq j \leq \iota-1}$.
\end{defin}

In the forthcoming second main outcome of this work, we construct a set of actual holomorphic solutions to the
principal equation (\ref{main_PDE_u}) which we name {\it outer solutions}. These solutions
are well defined on the sectors $\mathcal{E}_{j}^{\infty}$ of a good covering w.r.t $\epsilon$, on an
associated sector $\mathcal{T}_{\epsilon}^{\infty}$ w.r.t $t$ and on an horizontal strip $H_{\beta}$ w.r.t $z$.
Moreover, we can control the difference between any two consecutive solutions on the crossing sector
$\mathcal{E}_{j}^{\infty} \cap \mathcal{E}_{j+1}^{\infty}$ and confirm that it is exponentially flat of order
at most $\gamma$ w.r.t $\epsilon$.

\begin{theo} We focus on the singularly perturbed equation (\ref{main_PDE_u}) and we take for granted that all
the aforementioned constraints (\ref{constraints_degree_coeff_Q_Rl}), (\ref{constraint_m0_ml}), (\ref{defin_b_j}),
(\ref{defin_F_tzepsilon}), (\ref{defin_omega_F}), (\ref{constraints_d_D_di_kj_bk_hl}),
(\ref{constraint_DeltaD_gamma}), (\ref{constraint_Upsilon}), (\ref{first_cond_main_conv_eq_W}),
(\ref{second_cond_main_conv_eq_W}), (\ref{third_cond_main_conv_eq_W}),
(\ref{4_cond_main_conv_eq_W}), (\ref{5_cond_main_conv_eq_W}) hold. Besides, we choose a good covering
$\{ \mathcal{E}_{j}^{\infty} \}_{0 \leq j \leq \iota-1}$ with aperture $\frac{\pi}{\gamma}$ for which
an associated family of a sector $\mathcal{T}_{\epsilon}^{\infty}$ and directions
$\{ \mathfrak{u}_{j} \}_{0 \leq j \leq \iota-1}$ can be singled out.

Then, there exists a constant $\zeta_{1}>0$ for which we assume the restriction
(\ref{small_coeff_a_c_B_CF_Rl_RD}) to take place. As a result, for each $0 \leq j \leq \iota-1$, 
one can build up an actual solution $v^{\mathfrak{u}_{j}}(t,z,\epsilon)$ of
(\ref{main_PDE_u}), where the piece of forcing term $(t,z) \mapsto F^{\theta_{F}}(t,z,\epsilon)$
needs to be specified for $\theta_{F}=\mathfrak{u}_{j}$ and represents a bounded
holomorphic function denoted $(t,z) \mapsto F^{\mathfrak{u}_{j}}(t,z,\epsilon)$ w.r.t $t$ on
$\mathcal{T}_{\epsilon}^{\infty}$, w.r.t $z$ on a strip
$H_{\beta'}$, for any given $0 < \beta' < \beta$, when $\epsilon$ belongs to $\mathcal{E}_{j}^{\infty}$.

Moreover, for each $\epsilon \in \mathcal{E}_{j}^{\infty}$, the function
$(t,z) \mapsto v^{\mathfrak{u}_{j}}(t,z,\epsilon)$
is bounded and holomorphic on $\mathcal{T}_{\epsilon}^{\infty} \times H_{\beta'}$
for any given $0 < \beta' < \beta$, $0 \leq j \leq \iota-1$. Besides, for each
prescribed $t \in \mathcal{T}^{\infty}$, where
\begin{equation}
\mathcal{T}^{\infty} = \{ t \in \mathbb{C}^{\ast} / \alpha_{\infty} < \mathrm{arg}(t) < \beta_{\infty} \},
\label{defin_mathcalT_infty}
\end{equation}
the function
$(z,\epsilon) \mapsto \epsilon^{-\gamma_{0}}v^{\mathfrak{u}_{j}}(t,z,\epsilon)$ is bounded
holomorphic on $(\mathcal{E}_{j}^{\infty} \cap D(0,\sigma_{t})) \times H_{\beta'}$,
for any given $0 < \beta' < \beta$, $0 \leq j \leq \iota-1$ and suffers the next upper bounds: there
exist $K_{j},M_{j}>0$ (independent of $\epsilon$) such that
\begin{equation}
\sup_{z \in H_{\beta'}}
|\epsilon^{-\gamma_{0}}v^{\mathfrak{u}_{j+1}}(t,z,\epsilon) -
\epsilon^{-\gamma_{0}}v^{\mathfrak{u}_{j}}(t,z,\epsilon)| \leq K_{j}
\exp( - \frac{M_{j}|t|}{|\epsilon|^{\gamma}} ) \label{diff_v_uj_exp_small}
\end{equation}
for all $\epsilon \in \mathcal{E}_{j+1}^{\infty} \cap \mathcal{E}_{j}^{\infty} \cap D(0,\sigma_{t})$,
for $0 \leq j \leq \iota-1$ (where
by convention $v^{\mathfrak{u}_{\iota}} = v^{\mathfrak{u}_{0}}$), where
\begin{equation}
\sigma_{t} = (\frac{ \delta_{1}^{\infty} - \delta_{2}^{\infty} }{\nu})^{\frac{1}{\gamma - \Gamma}}
|t|^{\frac{1}{\gamma - \Gamma}} \label{defin_sigma_t}
\end{equation}
for some positive real number $\delta_{2}^{\infty}>0$ chosen in a way that
$\delta_{2}^{\infty} < \delta_{1}^{\infty}$ holds.
\end{theo}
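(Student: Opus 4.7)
\medskip

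\noindent The strategy will mirror the construction of the inner solutions carried out in Theorem 1, but this time relying on classical Laplace transforms of order one in place of $m_\kappa$-Laplace. First, for each direction $\mathfrak{u}_j$, $0 \leq j \leq \iota-1$, I will invoke Proposition 7 to produce a function $W^{\mathfrak{u}_j}(\tau,m,\epsilon) \in E_{(\nu,\beta,\mu,\Gamma,\epsilon)}^{\mathfrak{u}_j}$ solving (\ref{main_conv_eq_W}) and uniformly bounded by $\varpi_1$ in norm, using the smallness hypothesis (\ref{small_coeff_a_c_B_CF_Rl_RD}). A standard uniqueness argument (following the one used for $\omega_{\kappa}^{\mathfrak{d}_p}$ in the proof of Theorem 1) shows that the various $W^{\mathfrak{u}_j}$, restricted to the common disc $\bar D(0,\rho)$, coincide with the convergent series solution of (\ref{main_conv_eq_W}) at $\tau=0$; so they are analytic continuations of a single germ to the sectorial domains $U_{\mathfrak{u}_j} \cup \bar D(0,\rho)$.

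\medskip

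\noindent Second, I will define
\begin{equation*}
v^{\mathfrak{u}_j}(t,z,\epsilon) = \frac{\epsilon^{\gamma_0}}{(2\pi)^{1/2}} \int_{-\infty}^{+\infty} \int_{L_{\mathfrak{u}_j^\Delta}} W^{\mathfrak{u}_j}(u,m,\epsilon) \exp\!\Bigl(-\frac{t}{\epsilon^\gamma}u\Bigr) e^{izm}\, du\, dm,
\end{equation*}
where $L_{\mathfrak{u}_j^\Delta} = \mathbb{R}_+ e^{\sqrt{-1}\mathfrak{u}_j^\Delta}$ is the halfline from Definition 7 (its direction may depend on $(\epsilon,t)$). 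The norm estimate on $W^{\mathfrak{u}_j}$ inside $E_{(\nu,\beta,\mu,\Gamma,\epsilon)}^{\mathfrak{u}_j}$, combined with the angular condition $\cos(\mathfrak{u}_j^\Delta + \arg(t/\epsilon^\gamma)) \geq \delta_1^\infty$ and the threshold $|t| > \Delta_{\nu,\delta_1^\infty}|\epsilon|^{\gamma-\Gamma}$ with $\Delta_{\nu,\delta_1^\infty} > \nu/\delta_1^\infty$, forces the exponent $-\tfrac{|t|}{|\epsilon|^\gamma}\cos(\ldots) + \nu\tfrac{|u|}{|\epsilon|^\Gamma}|u|^{-1}$ (schematically) to provide absolute convergence of the double integral and to yield bounded holomorphy of $v^{\mathfrak{u}_j}$ on $\mathcal{T}_\epsilon^\infty \times H_{\beta'}$ for $\epsilon \in \mathcal{E}_j^\infty$. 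A Cauchy theorem argument shows that the value is independent of the precise choice of $\mathfrak{u}_j^\Delta \in U_{\mathfrak{u}_j}$, and the analogous reasoning already carried out in Section 2.2 extends $F^{\mathfrak{u}_j}$ as bounded holomorphic on this same domain. Finally, that $v^{\mathfrak{u}_j}$ solves (\ref{main_PDE_u_divided_tdD}), hence (\ref{main_PDE_u}), follows by applying the identities of Lemma 4 (for the $t^{-k}$ prefactors, for the derivatives $\partial_t^{\delta}$, and for the Cauchy convolution producing the nonlinear $v \cdot v$ contribution) together with the Fourier identities (\ref{dz_fourier})--(\ref{prod_fourier}) of Proposition 2, which convert the PDE on $v^{\mathfrak{u}_j}$ into the convolution equation (\ref{main_conv_eq_W}) for $W^{\mathfrak{u}_j}$.

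\medskip

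\noindent Third, for the exponential flatness (\ref{diff_v_uj_exp_small}), I will deploy the usual path deformation argument, transposed to Laplace of order one. Since $W^{\mathfrak{u}_{j+1}}$ and $W^{\mathfrak{u}_j}$ agree on $\bar D(0,\rho)$ (both descend from the common germ), the map $u \mapsto W^{\mathfrak{u}_j}(u,m,\epsilon) e^{-tu/\epsilon^\gamma}$ is holomorphic there, so its integral along the concatenated contour (segment $0 \to (\rho/2)e^{i\gamma_{j+1}}$, arc $(\rho/2)e^{i\gamma_{j+1}} \to (\rho/2)e^{i\gamma_j}$, segment $(\rho/2)e^{i\gamma_j} \to 0$) vanishes. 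This decomposes $\epsilon^{-\gamma_0}(v^{\mathfrak{u}_{j+1}} - v^{\mathfrak{u}_j})$ into two tail integrals over $[\rho/2,+\infty)e^{i\gamma_{j+1}}$, $[\rho/2,+\infty)e^{i\gamma_j}$ and an arc of radius $\rho/2$. Each tail integral is dominated by
\begin{equation*}
\int_{\rho/2}^{+\infty} \exp\!\Bigl( -\Bigl( \frac{\delta_1^\infty |t|}{|\epsilon|^\gamma} - \frac{\nu}{|\epsilon|^\Gamma}\Bigr) r\Bigr)\, dr,
\end{equation*}
and, exactly when $|\epsilon| \leq \sigma_t$ as defined in (\ref{defin_sigma_t}), the exponent is bounded below by $\delta_2^\infty |t|/|\epsilon|^\gamma$, which delivers a bound of shape $K_j \exp(-M_j|t|/|\epsilon|^\gamma)$. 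The arc contribution is controlled by the same device, exploiting $\sup_{s\geq0} s^{m_1} e^{-m_2 s}$ just as in inequality (\ref{I_3_exp_small_order_chi_alpha_kappa}).

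\medskip

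\noindent \textbf{Expected main obstacle.} The delicate point is the calibration of the domain $\mathcal{T}_\epsilon^\infty$ so that Lemma 4 legitimately applies to $W^{\mathfrak{u}_j}(\cdot,m,\epsilon)$ for each $(m,\epsilon)$: the cosine lower bound $\delta_1^\infty$ arising from the angular constraints must absorb the exponential growth rate $\nu/|\epsilon|^\Gamma$ of $W^{\mathfrak{u}_j}$, which is precisely what the threshold $\Delta_{\nu,\delta_1^\infty} > \nu/\delta_1^\infty$ in Definition 7 secures. The same compatibility is what explains why the exponential flatness (\ref{diff_v_uj_exp_small}) holds only on the restricted range $|\epsilon| \leq \sigma_t$ of (\ref{defin_sigma_t}), and produces the scaling gap preventing a common time domain with the inner solutions of Theorem 1 alluded to in Remark 3.
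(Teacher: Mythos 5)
Your proposal matches the paper's proof in all essential respects: Proposition 7 produces the solutions $W^{\mathfrak{u}_j}(\tau,m,\epsilon)$ in $E_{(\nu,\beta,\mu,\Gamma,\epsilon)}^{\mathfrak{u}_j}$ as analytic continuations of a common germ on $D(0,\rho)$, the classical Laplace--Fourier representation along $L_{\mathfrak{u}_j^{\Delta}}$ defines $v^{\mathfrak{u}_j}$ with bounded holomorphy on $\mathcal{T}_{\epsilon}^{\infty}\times H_{\beta'}$ guaranteed by the threshold and angular conditions of Definition 7, and the path-deformation splitting into two tails plus an arc on the restricted range $|\epsilon|<\sigma_t$ delivers the exponential flatness exactly as the paper's estimates $J_1$, $J_2$, $J_3$ do. The only quibble is purely notational (you write $\gamma_j$, $\gamma_{j+1}$ for the contour directions where the paper uses $\mathfrak{u}_j^{\Delta}$, $\mathfrak{u}_{j+1}^{\Delta}$), which does not affect correctness.
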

\begin{proof} We select a good covering $\{ \mathcal{E}_{j}^{\infty} \}_{0 \leq j \leq \iota-1}$ in
$\mathbb{C}^{\ast}$ with aperture $\frac{\pi}{\gamma}$ and a family of sectors and directions
$\{ \mathcal{T}_{\epsilon}^{\infty} , \{ \mathfrak{u}_{j} \}_{0 \leq j \leq \iota-1} \}$ associated to this covering
according to Definition 7.

As a result of the estimates (\ref{4_bis_bound_G_inclusion}), we observe that the function
$\Upsilon(\tau,m,\epsilon)$ must be governed by the next bounds
\begin{equation}
|\Upsilon(\tau,m,\epsilon)| \leq B_{\Upsilon} ||C_{F}(m)||_{(\beta,\mu)}
(1 + |m|)^{-\mu} \exp(-\beta |m|)
|R_{D}(im)| \frac{|\tau|^{\delta_D}}{1 + |\frac{\tau}{\epsilon^{\Gamma}}|^{2}}
\exp( \nu |\frac{\tau}{\epsilon^{\Gamma}}| )
\end{equation}
for all $\tau \in U_{\mathfrak{u}_{j}} \cup D(0,\rho)$, all $\epsilon \in D(0,\epsilon_{0}) \setminus \{ 0 \}$.
By construction, we observe that the piece of forcing term
$t^{-d_{D}}F^{\mathfrak{u}_{j}}(t,z,\epsilon)$ for the specific value $\theta_{F}=\mathfrak{u}_{j}$ as
described in Subsection 2.2 may be written as a usual Laplace/Fourier inverse transform along the halfline
$L_{\mathfrak{u}_{j}^{\Delta}}$ of $\Upsilon(\tau,m,\epsilon)$ as follows
$$ t^{-d_{D}}F^{\mathfrak{u}_{j}}(t,z,\epsilon) = \frac{1}{(2 \pi)^{1/2}}
\int_{-\infty}^{+\infty} \int_{L_{\mathfrak{u}_{j}^{\Delta}}}
\Upsilon(u,m,\epsilon) \exp( -(\frac{t}{\epsilon^{\gamma}})u ) e^{izm} du dm.
$$
Furthermore, Proposition 7 permits us, for each direction $\mathfrak{u}_{j}$, to build a solution named
$W^{\mathfrak{u}_{j}}(\tau,m,\epsilon)$ of the convolution equation (\ref{main_conv_eq_W}) which is stemming from
the Banach space $E_{(\nu,\beta,\mu,\Gamma,\epsilon)}^{\mathfrak{u}_{j}}$ and is therefore submitted to the next
bounds
\begin{equation}
|W^{\mathfrak{u}_{j}}(\tau,m,\epsilon)| \leq \varpi_{1}(1 + |m|)^{-\mu} e^{-\beta|m|}
\frac{1}{(1 + |\frac{\tau}{\epsilon^{\Gamma}}|^{2})} \exp( \nu |\frac{\tau}{\epsilon^{\Gamma}}| )
\label{bounds_W_uj}
\end{equation}
for all $\tau \in \bar{D}(0,\rho) \cup U_{\mathfrak{u}_{j}}$, $m \in \mathbb{R}$,
$\epsilon \in D(0,\epsilon_{0}) \setminus \{ 0 \}$, for some well chosen $\varpi_{1}>0$. In particular,
these functions $W^{\mathfrak{u}_{j}}(\tau,m,\epsilon)$ are analytic continuations w.r.t $\tau$ of a common
function set as $\tau \mapsto W(\tau,m,\epsilon)$ on $D(0,\rho)$. We define
$v^{\mathfrak{u}_{j}}(t,z,\epsilon)$ as a usual Laplace and Fourier inverse transform
$$ v^{\mathfrak{u}_{j}}(t,z,\epsilon) = \frac{\epsilon^{\gamma_0}}{(2\pi)^{1/2}}
\int_{-\infty}^{+\infty} \int_{L_{\mathfrak{u}_{j}^{\Delta}}}
W^{\mathfrak{u}_{j}}(u,m,\epsilon) \exp( - (\frac{t}{\epsilon^{\gamma}})u )
e^{izm} du dm. $$
By construction, each function $(t,z) \mapsto t^{-d_{D}}F^{\mathfrak{u}_{j}}(t,z,\epsilon)$ and
$(t,z) \mapsto v^{\mathfrak{u}_{j}}(t,z,\epsilon)$ represents a bounded and holomorphic map on the domain
$\mathcal{T}_{\epsilon}^{\infty} \times H_{\beta'}$, for any given $0 < \beta' < \beta$, for any fixed
$\epsilon \in \mathcal{E}_{j}^{\infty}$, according to Definition 7.

Refering to the basic properties of the classical Laplace and Fourier inverse transforms disclosed in Proposition 2 and
Lemma 4, we notice that the function $(t,z) \mapsto v^{\mathfrak{u}_{j}}(t,z,\epsilon)$ actually solves
the equation (\ref{main_PDE_u_divided_tdD}) and hence the equation (\ref{main_PDE_u}) after multiplication by
$t^{d_D}$, where the expression $F^{\theta_{F}}(t,z,\epsilon)$ needs to be specialized for
$\theta_{F} = \mathfrak{u}_{j}$ and subsequently be replaced by the function
$F^{\mathfrak{u}_{j}}(t,z,\epsilon)$, for all $\epsilon \in \mathcal{E}_{j}^{\infty}$ and
$(t,z) \in \mathcal{T}_{\epsilon}^{\infty} \times H_{\beta'}$. Furthermore, by direct inspection, we can check that
for each $t \in \mathcal{T}^{\infty}$, the function
$(\epsilon,z) \mapsto \epsilon^{-\gamma_{0}}v^{\mathfrak{u}_{j}}(t,z,\epsilon)$ is bounded holomorphic on
$\mathcal{E}_{j}^{\infty} \times H_{\beta'}$, for any $0 < \beta' < \beta$ provided that
$|\epsilon| < \sigma_{t}$, for $\sigma_{t}$ defined in (\ref{defin_sigma_t}).

In the remaining part of the proof, we aim attention at the bounds
(\ref{diff_v_uj_exp_small}). The lines of arguments are bordering those given in Theorem 1 in order to
yield the estimates (\ref{difference_u_dp_exp_small_epsilon}). Namely, the first task consists in splitting
the difference $\epsilon^{-\gamma_0}v^{\mathfrak{u}_{j+1}} - \epsilon^{-\gamma_0}v^{\mathfrak{u}_{j+1}}$
into a sum of three integrals that are easier to handle. More precisely, owing to the fact that the function
$u \mapsto W(u,m,\epsilon)\exp( -(\frac{tu}{\epsilon^{\gamma}}) )$ is holomorphic on
$D(0,\rho)$, for all $(m,\epsilon) \in \mathbb{R} \times (D(0,\epsilon_{0}) \setminus \{ 0 \})$, its integral
along a segment connecting 0 and $(\rho/2)e^{i \mathfrak{u}_{j+1}}$, followed by an arc of circle with radius
$\rho/2$ joining $(\rho/2)e^{i \mathfrak{u}_{j+1}}$ and $(\rho/2)e^{i \mathfrak{u}_{j}}$ and ending with
a segment with edges located at $(\rho/2)e^{i \mathfrak{u}_{j}}$ and 0, is vanishing. As a result, we can
expand the next difference
\begin{multline}
\epsilon^{-\gamma_0}v^{\mathfrak{u}_{j+1}}(t,z,\epsilon) -
\epsilon^{-\gamma_0}v^{\mathfrak{u}_{j}}(t,z,\epsilon) =
\frac{1}{(2\pi)^{1/2}}\int_{-\infty}^{+\infty}
\int_{L_{\rho/2,\mathfrak{u}_{j+1}^{\Delta}}}
W^{\mathfrak{u}_{j+1}}(u,m,\epsilon) \\
\times \exp( -(\frac{t}{\epsilon^{\gamma}})u )
e^{izm} du dm\\ -
\frac{1}{(2\pi)^{1/2}}\int_{-\infty}^{+\infty}
\int_{L_{\rho/2,\mathfrak{u}_{j}^{\Delta}}}
W^{\mathfrak{u}_j}(u,m,\epsilon) \exp( -(\frac{t}{\epsilon^{\gamma}})u )
e^{izm} du dm\\
+ \frac{1}{(2\pi)^{1/2}}\int_{-\infty}^{+\infty}
\int_{C_{\rho/2,\mathfrak{u}_{j}^{\Delta},\mathfrak{u}_{j+1}^{\Delta}}}
W(u,m,\epsilon) \exp( -(\frac{t}{\epsilon^{\gamma}})u )
e^{izm} du dm \label{difference_epsilon_gamma0_v_uj_decomposition}
\end{multline}
where $L_{\rho/2,\mathfrak{u}_{j+1}^{\Delta}} = [\rho/2,+\infty)e^{i\mathfrak{u}_{j+1}^{\Delta}}$,
$L_{\rho/2,\mathfrak{u}_{j}^{\Delta}} = [\rho/2,+\infty)e^{i\mathfrak{u}_{j}^{\Delta}}$ and
$C_{\rho/2,\mathfrak{u}_{j}^{\Delta},\mathfrak{u}_{j+1}^{\Delta}}$ stands for an arc of circle with radius
joining $(\rho/2)e^{i\mathfrak{u}_{j}^{\Delta}}$ and
$(\rho/2)e^{i\mathfrak{u}_{j+1}^{\Delta}}$ with an appropriate orientation.\medskip

\noindent We provide upper bounds for the first integral
$$ J_{1} = \left| \frac{1}{(2\pi)^{1/2}} \int_{-\infty}^{+\infty}
\int_{L_{\rho/2,\mathfrak{u}_{j+1}^{\Delta}}} W^{\mathfrak{u}_{j+1}}(u,m,\epsilon)
\exp( -(\frac{t}{\epsilon^{\gamma}})u )
e^{izm} du dm \right| $$
In accordance with the above estimates (\ref{bounds_W_uj}) and with the constraints disclosed in Definition 7,
we check that
\begin{multline}
J_{1} \leq \frac{1}{(2\pi)^{1/2}}
\int_{-\infty}^{+\infty} \int_{\rho/2}^{+\infty} \varpi_{1}
(1 + |m|)^{-\mu} e^{-\beta|m|} \frac{1}{1 + (\frac{r}{|\epsilon|^{\Gamma}})^{2}}
\exp( \nu \frac{r}{|\epsilon|^{\Gamma}} ) \\
\times \exp( -\frac{|t|}{|\epsilon|^{\gamma}}
r \cos( \mathfrak{u}_{j}^{\Delta} + \mathrm{arg}(\frac{t}{\epsilon^{\gamma}}) )
\exp( -m \mathrm{Im}(z) ) dr dm\\
\leq \frac{\varpi_{1}}{(2\pi)^{1/2}} \int_{-\infty}^{+\infty}
e^{-(\beta - \beta')|m|} dm \int_{\rho/2}^{+\infty}
\exp( -r (-\frac{\nu}{|\epsilon|^{\Gamma}} + \frac{|t|}{|\epsilon|^{\gamma}}\delta_{1}^{\infty} ) )
dr \\
= \frac{2 \varpi_{1}}{(2\pi)^{1/2}(\beta - \beta')} \frac{1}{-\frac{\nu}{|\epsilon|^{\Gamma}} +
\frac{|t|}{|\epsilon|^{\gamma}}\delta_{1}^{\infty} }
\exp( - \frac{\rho}{2}( -\frac{\nu}{|\epsilon|^{\Gamma}} + \frac{|t|}{|\epsilon|^{\gamma}}\delta_{1}^{\infty} ) )\\
\leq \frac{2 \varpi_{1}}{(2\pi)^{1/2}(\beta - \beta')}
\frac{|\epsilon|^{\gamma}}{\delta_{2}^{\infty}|t|} \exp( -\frac{\rho}{2}
\delta_{2}^{\infty} \frac{|t|}{|\epsilon|^{\gamma}} ) \label{J1_bounds}
\end{multline}
for all $\epsilon \in \mathcal{E}_{j+1}^{\infty} \cap \mathcal{E}_{j}^{\infty}$, with
$|\epsilon| < \sigma_{t}$. In a similar manner, we can furnish estimates for the second integral
$$ J_{2} = \left| \frac{1}{(2\pi)^{1/2}} \int_{-\infty}^{+\infty}
\int_{L_{\rho/2,\mathfrak{u}_{j}^{\Delta}}} W^{\mathfrak{u}_{j}}(u,m,\epsilon)
\exp( -(\frac{t}{\epsilon^{\gamma}})u )
e^{izm} du dm \right|. $$
Namely, we can show that
\begin{equation}
J_{2} \leq 
\frac{2 \varpi_{1}}{(2\pi)^{1/2}(\beta - \beta')}
\frac{|\epsilon|^{\gamma}}{\delta_{2}^{\infty}|t|} \exp( -\frac{\rho}{2}
\delta_{2}^{\infty} \frac{|t|}{|\epsilon|^{\gamma}} ) \label{J2_bounds}
\end{equation}
for all $\epsilon \in \mathcal{E}_{j+1}^{\infty} \cap \mathcal{E}_{j}^{\infty}$, assuming that
$|\epsilon| < \sigma_{t}$.\medskip

\noindent At last, we target the third integral along an arc of circle
$$ J_{3} = \left| \frac{1}{(2\pi)^{1/2}}\int_{-\infty}^{+\infty}
\int_{C_{\rho/2,\mathfrak{u}_{j}^{\Delta},\mathfrak{u}_{j+1}^{\Delta}}}
W(u,m,\epsilon) \exp( -(\frac{t}{\epsilon^{\gamma}})u )
e^{izm} du dm  \right| $$
Calling again to mind the bounds (\ref{bounds_W_uj}) and the constraints discussed in Definition 7, we observe that
\begin{multline}
J_{3} \leq \frac{1}{(2\pi)^{1/2}}
\int_{-\infty}^{+\infty} \left| \int_{\mathfrak{u}_{j}^{\Delta}}^{\mathfrak{u}_{j+1}^{\Delta}}
\varpi_{1} (1 + |m|)^{-\mu} e^{-\beta|m|} \frac{1}{1 + (\frac{\rho/2}{|\epsilon|^{\Gamma}})^{2}}
\exp( \nu \frac{\rho/2}{|\epsilon|^{\Gamma}} ) \right. \\
\left. \times \exp( -(\frac{|t|}{|\epsilon|^{\gamma}} \frac{\rho}{2})
\cos( \theta + \mathrm{arg}(\frac{t}{\epsilon^{\gamma}}) ) e^{-m \mathrm{Im}(z)} \frac{\rho}{2} d\theta \right| dm\\
\leq \frac{\varpi_{1} \rho/2}{(2\pi)^{1/2}} \int_{-\infty}^{+\infty}
e^{-(\beta - \beta')|m|} dm |\mathfrak{u}_{j+1}^{\Delta} - \mathfrak{u}_{j}^{\Delta}|
\exp(-\frac{\rho}{2}( -\frac{\nu}{|\epsilon|^{\Gamma}} + \frac{|t|}{|\epsilon|^{\gamma}}\delta_{1}^{\infty}) )\\
\leq \frac{\varpi_{1}\rho}{(2\pi)^{1/2}(\beta - \beta')}
|\mathfrak{u}_{j+1}^{\Delta} - \mathfrak{u}_{j}^{\Delta}| \exp( -\frac{\rho}{2}
\frac{\delta_{2}^{\infty}}{|\epsilon|^{\gamma}}|t| ) \label{J3_bounds}
\end{multline}
for all $\epsilon \in \mathcal{E}_{j+1}^{\infty} \cap \mathcal{E}_{j}^{\infty}$, when
$|\epsilon| < \sigma_{t}$.\medskip

\noindent In an ultimate step, we gather the three above inequalities
(\ref{J1_bounds}), (\ref{J2_bounds}), (\ref{J3_bounds}) and conclude from the splitting
(\ref{difference_epsilon_gamma0_v_uj_decomposition}) that
\begin{multline*}
|\epsilon^{-\gamma_0}v^{\mathfrak{u}_{j+1}}(t,z,\epsilon) -
\epsilon^{-\gamma_0}v^{\mathfrak{u}_{j}}(t,z,\epsilon)| \leq
\frac{4 \varpi_{1}}{(2\pi)^{1/2}(\beta - \beta')}
\frac{|\epsilon|^{\gamma}}{\delta_{2}^{\infty}|t|} \exp( -\frac{\rho}{2}
\delta_{2}^{\infty} \frac{|t|}{|\epsilon|^{\gamma}} )\\
+ \frac{\varpi_{1}\rho}{(2\pi)^{1/2}(\beta - \beta')}
|\mathfrak{u}_{j+1}^{\Delta} - \mathfrak{u}_{j}^{\Delta}| \exp( -\frac{\rho}{2}
\frac{\delta_{2}^{\infty}}{|\epsilon|^{\gamma}}|t| )
\end{multline*}
for all $\epsilon \in \mathcal{E}_{j+1}^{\infty} \cap \mathcal{E}_{j}^{\infty}$, granting that
$|\epsilon| < \sigma_{t}$. As a result, the inequality (\ref{diff_v_uj_exp_small}) shows up.
\end{proof}

\section{Gevrey asymptotic expansions of the inner and outer solutions}

\subsection{The Ramis-Sibuya approach for the $k-$summability of formal series}

We first remind the reader the notion of $k-$summability as defined in classical textbooks such as
\cite{ba}, \cite{ba2}.

\begin{defin} Let $k > 1/2$ be a real number. A formal series
$$\hat{a}(\epsilon) = \sum_{j=0}^{\infty}  a_{j} \epsilon^{j} \in \mathbb{F}[[\epsilon]]$$
whose coefficients belong to the Banach space $( \mathbb{F}, ||.||_{\mathbb{F}} )$ is called $k-$summable
with respect to $\epsilon$ in the direction $d \in \mathbb{R}$ if \medskip

{\bf i)} one can choose a radius $\rho \in \mathbb{R}_{+}$ such that the following formal series, called formal
Borel transform of $\hat{a}$ of order $k$ 
$$ \mathcal{B}_{k}(\hat{a})(\tau) = \sum_{j=0}^{\infty} \frac{ a_{j} \tau^{j}  }{ \Gamma(1 + \frac{j}{k}) }
\in \mathbb{F}[[\tau]],$$
is absolutely convergent for $|\tau| < \rho$, \medskip

{\bf ii)} there exists an aperture $\delta > 0$ such that the series $\mathcal{B}_{k}(\hat{a})(\tau)$ can be
analytically continued w.r.t $\tau$ in a sector
$S_{d,\delta} = \{ \tau \in \mathbb{C}^{\ast} : |d - \mathrm{arg}(\tau) | < \delta \} $. Moreover, there
exist two constants $C,K >0$ with
$$ ||\mathcal{B}_{k}(\hat{a})(\tau)||_{\mathbb{F}}
\leq C e^{ K|\tau|^{k}} $$
for all $\tau \in S_{d, \delta}$.
\end{defin}
If these constraints are fulfilled, the vector valued Laplace transform of order $k$ of
$\mathcal{B}_{k}(\hat{a})(\tau)$ in the direction $d$ is introduced as
$$ \mathcal{L}^{d}_{k}(\mathcal{B}_{k}(\hat{a}))(\epsilon) = \epsilon^{-k} \int_{L_{\gamma}}
\mathcal{B}_{k}(\hat{a})(u) e^{ - ( u/\epsilon )^{k} } ku^{k-1}du,$$
along any half-line $L_{\gamma} = \mathbb{R}_{+}e^{i\gamma} \subset S_{d,\delta} \cup \{ 0 \}$, where
$\gamma$ may depend on
$\epsilon$ and is chosen in such a way that
$\cos(k(\gamma - \mathrm{arg}(\epsilon))) \geq \delta_{1} > 0$, for some fixed $\delta_{1}$, for all
$\epsilon$ in a sector
$$ S_{d,\theta,R^{1/k}} = \{ \epsilon \in \mathbb{C}^{\ast} : |\epsilon| < R^{1/k} \ \ , \ \ |d - \mathrm{arg}(\epsilon) |
< \theta/2 \},$$
where $\frac{\pi}{k} < \theta < \frac{\pi}{k} + 2\delta$ and $0 < R < \delta_{1}/K$. The
function $\mathcal{L}^{d}_{k}(\mathcal{B}_{k}(\hat{a}))(\epsilon)$
is then named the $k-$sum of the formal series $\hat{a}(t)$ in the direction $d$. It turns out to be bounded and
holomorphic on the sector
$S_{d,\theta,R^{1/k}}$ and has the formal series $\hat{a}(\epsilon)$ as Gevrey asymptotic
expansion of order $1/k$ with respect to $\epsilon$ on $S_{d,\theta,R^{1/k}}$. In other words,
for all
$\frac{\pi}{k} < \theta_{1} < \theta$, there exist $C,M > 0$ such that
$$ ||\mathcal{L}^{d}_{k}(\mathcal{B}_{k}(\hat{a}))(\epsilon) - \sum_{p=0}^{n-1}
a_{p} \epsilon^{p}||_{\mathbb{F}} \leq CM^{n}\Gamma(1+ \frac{n}{k})|\epsilon|^{n} $$
for all $n \geq 1$, all $\epsilon \in S_{d,\theta_{1},R^{1/k}}$.\medskip

The next cohomological criterion for $k-$summability of formal series with coefficients in Banach spaces (see
\cite{ba2}, p. 121 or \cite{hssi}, Lemma XI-2-6) is accustomed to be called Ramis-Sibuya Theorem in the literature.
This result appears as a fundamental tool in the proof of our third main result (Theorem 3).\medskip

\noindent {\bf Theorem (R.S.)} {\it Let $(\mathbb{F},||.||_{\mathbb{F}})$ be a Banach space over $\mathbb{C}$ and
$\{ \mathcal{E}_{p} \}_{0 \leq i \leq \varsigma-1}$ be a good covering in $\mathbb{C}^{\ast}$ with aperture
$\pi/k<2\pi$ (as displayed in Definition 4). For all
$0 \leq p \leq \varsigma - 1$, let $G_{p}$ be a holomorphic function from $\mathcal{E}_{p}$ into
the Banach space $(\mathbb{F},||.||_{\mathbb{F}})$ and let the cocycle
$\Theta_{p}(\epsilon) = G_{p+1}(\epsilon) - G_{p}(\epsilon)$
be a holomorphic function from the sector $Z_{p} = \mathcal{E}_{p+1} \cap \mathcal{E}_{p}$ into $\mathbb{F}$
(with the convention that $\mathcal{E}_{\varsigma} = \mathcal{E}_{0}$ and $G_{\varsigma} = G_{0}$).
We make the following assumptions.\medskip

\noindent {\bf 1)} The functions $G_{p}(\epsilon)$ are bounded as $\epsilon \in \mathcal{E}_{p}$ tends to the origin
in $\mathbb{C}$, for all $0 \leq p \leq \varsigma - 1$.\medskip

\noindent {\bf 2)} The functions $\Theta_{p}(\epsilon)$ are exponentially flat of order $k$ on $Z_{p}$, for all
$0 \leq p \leq \varsigma-1$. More specifically, there exist constants $C_{p},A_{p}>0$ such that
$$ ||\Theta_{p}(\epsilon)||_{\mathbb{F}} \leq C_{p}e^{-A_{p}/|\epsilon|^{k}} $$
for all $\epsilon \in Z_{p}$, all $0 \leq p \leq \varsigma-1$.\medskip

Then, for all $0 \leq p \leq \nu - 1$, the functions $G_{p}(\epsilon)$ represent the $k-$sums on $\mathcal{E}_{p}$ of a
common $k-$summable formal series $\hat{G}(\epsilon) \in \mathbb{F}[[\epsilon]]$.}

\subsection{Parametric Gevrey asymptotic expansions of the inner and outer solutions of our main problem}

In this last subsection, we show that both inner solutions (constructed in Section 3) and outer solutions (built up
in Section 4) of our main equation (\ref{main_PDE_u}) have asymptotic expansions in the small parameter
$\epsilon$ near the origin. These asymptotic expansions have the special feature to be of some Gevrey types relying
on data involved in the shape of equation (\ref{main_PDE_u}) and it is worthwhile noting that these two types
turn out to be distinct in general. We are now in position to state the third main result of our work.

\begin{theo} We consider the singularly perturbed PDE (\ref{main_PDE_u}) and we pretend that all the foregoing 
constraints, by merging the ones from both Theorem 1 and Theorem 2, listed as follows
(\ref{constraints_degree_coeff_Q_Rl}), (\ref{constraint_m0_ml}), (\ref{defin_b_j}),
(\ref{defin_F_tzepsilon}), (\ref{defin_omega_F}), (\ref{cond_deltaD_alpha}), (\ref{cond_deltal_alpha}),
(\ref{quotient_Q_RD_in_S}), (\ref{cond_gamma_alpha_chi}), (\ref{constraint_kl_ml}),
(\ref{constraint_chi_bj_ni_alpha}), (\ref{constraint_chi_dlkappa_deltal_Deltal_alpha_m0}),
(\ref{constraint_chi_kappa_hl_mul_m0_alpha_deltaD}) and
(\ref{constraints_d_D_di_kj_bk_hl}),
(\ref{constraint_DeltaD_gamma}), (\ref{constraint_Upsilon}), (\ref{first_cond_main_conv_eq_W}),
(\ref{second_cond_main_conv_eq_W}), (\ref{third_cond_main_conv_eq_W}),
(\ref{4_cond_main_conv_eq_W}), (\ref{5_cond_main_conv_eq_W}), (\ref{small_coeff_a_c_B_CF_Rl_RD})
hold conjointly.

\noindent 1) Let us consider a good covering $\{ \mathcal{E}_{j}^{\infty} \}_{0 \leq j \leq \iota-1}$
with aperture $\frac{\pi}{\gamma}$ for which an associated family of sector $\mathcal{T}_{\epsilon}^{\infty}$ and
directions $\{ \mathfrak{u}_{j} \}_{0 \leq j \leq \iota-1}$ can be picked up. For each fixed value of $t$
belonging to the sector $\mathcal{T}^{\infty}$ (see (\ref{defin_mathcalT_infty})) and $0 \leq j \leq \iota-1$,
we view the function $(z,\epsilon) \mapsto \epsilon^{-\gamma_0}v^{\mathfrak{u}_{j}}(t,z,\epsilon)$, built up
in Theorem 2, as a bounded holomorphic function named $O_{t}^{j}(\epsilon)$ from
$\mathcal{E}_{j}^{\infty} \cap D(0,\sigma_{t})$ into $\mathcal{O}_{b}(H_{\beta'})$
(which stands for the Banach space of bounded
holomorphic functions on $H_{\beta'}$ equipped with the sup norm). Then, for each $0 \leq j \leq \iota-1$,
$O_{t}^{j}(\epsilon)$ is the $\gamma-$sum on $\mathcal{E}_{j}^{\infty} \cap D(0,\sigma_{t})$ of a
common formal series
$$ \hat{O}_{t}(\epsilon) = \sum_{k \geq 0} O_{t,k} \epsilon^{k} \in \mathcal{O}_{b}(H_{\beta'})[[\epsilon]]. $$
In other words, for all $0 \leq j \leq \iota-1$, there exist two constants $C_{j},M_{j}>0$ such that
\begin{equation}
\sup_{z \in H_{\beta'}} |\epsilon^{-\gamma_{0}}v^{\mathfrak{u}_{j}}(t,z,\epsilon) -
\sum_{k=0}^{n-1} O_{t,k} \epsilon^{k} | \leq C_{j} M_{j}^{n} \Gamma(1 + \frac{n}{\gamma})|\epsilon|^{n} 
\end{equation}
for all $n \geq 1$, all $\epsilon \in \mathcal{E}_{j}^{\infty} \cap D(0,\sigma_{t})$.

\noindent 2) We select a good covering $\{ \mathcal{E}_{p} \}_{0 \leq p \leq \varsigma-1}$ with aperture
$\frac{\pi}{\chi \kappa}$ for which a family of open sectors
$\{ (S_{\mathfrak{d}_{p},\theta,\rho_{X}|\epsilon|^{\chi}})_{0 \leq p \leq \varsigma-1},
\mathcal{T}_{\epsilon,\chi-\alpha} \}$ associated to it can be singled out. Then, Theorem 1 asserts that for each
direction $\mathfrak{u}_{j}$, $0 \leq j \leq \iota-1$, one can construct a family of holomorphic functions
$\{ u^{\mathfrak{d}_{p},j}(t,z,\epsilon) \}_{0 \leq p \leq \varsigma-1}$ solving the main equation
(\ref{main_PDE_u}) where the piece of forcing term $F^{\theta_{F}}(t,z,\epsilon)$ is asked to be specialized
for $\theta_{F}=\mathfrak{u}_{j}$. For all $0 \leq p \leq \varsigma-1$, we regard the map
$(x,z,\epsilon) \mapsto \epsilon^{m_0}u^{\mathfrak{d}_{p},j}(x\epsilon^{\chi - \alpha},z,\epsilon)$ as a bounded
holomorphic function called $I^{p,j}(\epsilon)$ from $\mathcal{E}_{p}$ into
$\mathcal{O}_{b}( (X \cap D(0,\sigma)) \times H_{\beta'} )$ (which represents the Banach space of bounded
holomorphic functions on $(X \cap D(0,\sigma)) \times H_{\beta'}$ endowed with the sup norm). Then, for all
$0 \leq p \leq \varsigma-1$, each $I^{p,j}(\epsilon)$ is the $\chi \kappa$-sum on $\mathcal{E}_{p}$ of a common
formal series
$$ \hat{I}^{j}(\epsilon) = \sum_{k \geq 0} I_{k}^{j} \epsilon^{k} \in
\mathcal{O}_{b}((X \cap D(0,\sigma)) \times H_{\beta'} )[[\epsilon]]. $$
Equivalently, for each $0 \leq p \leq \varsigma-1$, there exist two constants $C_{p},M_{p}>0$ such that
\begin{equation}
\sup_{x \in X \cap D(0,\sigma),z \in H_{\beta'}}
|\epsilon^{m_0}u^{\mathfrak{d}_{p},j}(x\epsilon^{\chi - \alpha},z,\epsilon) - \sum_{k=0}^{n-1} I_{k}^{j}
\epsilon^{k} | \leq C_{p}M_{p}^{n}\Gamma(1 + \frac{n}{\chi \kappa})|\epsilon|^{n}
\end{equation}
for all $n \geq 1$, all $\epsilon \in \mathcal{E}_{p}$.
\end{theo}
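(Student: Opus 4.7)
The plan is to deduce both Gevrey summability statements as direct consequences of the Ramis--Sibuya cohomological criterion recalled just above. Both parts follow the same three-step pattern: identify the cochain of sectorial holomorphic functions, check uniform boundedness on each sector of the covering, and match the exponential flatness of the cocycle with the Gevrey order demanded by the theorem. The bulk of the analytic work has already been done in Theorems 1 and 2, so the proof will mostly consist of transporting the estimates into the right Banach-space setting.

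For part (1), I would fix $t \in \mathcal{T}^{\infty}$ and regard, for each $0 \leq j \leq \iota - 1$, the map $\epsilon \mapsto O_{t}^{j}(\epsilon) := \epsilon^{-\gamma_{0}} v^{\mathfrak{u}_{j}}(t,z,\epsilon)$ as a holomorphic function from $\mathcal{E}_{j}^{\infty} \cap D(0,\sigma_{t})$ into the Banach space $\mathbb{F} = \mathcal{O}_{b}(H_{\beta'})$. Boundedness of $O_{t}^{j}(\epsilon)$ on $\mathcal{E}_{j}^{\infty} \cap D(0,\sigma_{t})$ follows from the integral representation as Laplace/Fourier transform of $W^{\mathfrak{u}_{j}}(u,m,\epsilon)$ together with the norm estimates in $E_{(\nu,\beta,\mu,\Gamma,\epsilon)}^{\mathfrak{u}_{j}}$ supplied by Proposition 7, after reproducing once the same exponential decay computation used at the end of the proof of Theorem 2. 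The cocycle $\Theta_{j}(\epsilon) = O_{t}^{j+1}(\epsilon) - O_{t}^{j}(\epsilon)$ satisfies, by inequality (\ref{diff_v_uj_exp_small}), the bound $\|\Theta_{j}(\epsilon)\|_{\mathbb{F}} \leq K_{j} \exp(-M_{j}|t|/|\epsilon|^{\gamma})$, which is exactly exponential flatness of order $k = \gamma$ in the variable $\epsilon$ (with constant $A_{j} = M_{j}|t|$). Since the covering $\{\mathcal{E}_{j}^{\infty}\}$ has aperture $\pi/\gamma$, Ramis--Sibuya applies and produces a formal series $\hat{O}_{t}(\epsilon) \in \mathcal{O}_{b}(H_{\beta'})[[\epsilon]]$ of which each $O_{t}^{j}$ is the $\gamma$-sum on $\mathcal{E}_{j}^{\infty} \cap D(0,\sigma_{t})$, with the expected Gevrey error estimate.

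Part (2) is structurally identical but in a different Banach space. I would fix a direction $\mathfrak{u}_{j}$, and for every $0 \leq p \leq \varsigma - 1$ view $I^{p,j}(\epsilon) := \epsilon^{m_{0}} u^{\mathfrak{d}_{p},j}(x\epsilon^{\chi-\alpha},z,\epsilon)$ as a holomorphic function from $\mathcal{E}_{p}$ into $\mathbb{F}' = \mathcal{O}_{b}((X \cap D(0,\sigma)) \times H_{\beta'})$. Boundedness of $I^{p,j}(\epsilon)$ on $\mathcal{E}_{p}$ is a direct byproduct of Theorem 1, where the maps $(x,z,\epsilon) \mapsto \epsilon^{m_{0}} u^{\mathfrak{d}_{p},j}(x\epsilon^{\chi-\alpha},z,\epsilon)$ are shown to be bounded holomorphic on $X \times \mathcal{E}_{p} \times H_{\beta'}$. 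The cocycle $\Theta_{p}(\epsilon) = I^{p+1,j}(\epsilon) - I^{p,j}(\epsilon)$ is controlled, thanks to the estimate (\ref{difference_u_dp_exp_small_epsilon}), by $K_{p} \exp(-M_{p}/|\epsilon|^{\chi\kappa})$ uniformly in $(x,z) \in (X \cap D(0,\sigma)) \times H_{\beta'}$, that is exponential flatness of order $k = \chi\kappa$. Combining with the aperture $\pi/(\chi\kappa)$ of the good covering $\{\mathcal{E}_{p}\}$, Ramis--Sibuya again yields a unique formal series $\hat{I}^{j}(\epsilon) \in \mathbb{F}'[[\epsilon]]$ whose $\chi\kappa$-sum on $\mathcal{E}_{p}$ coincides with $I^{p,j}(\epsilon)$ and gives the announced Gevrey estimate.

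The only genuinely nontrivial point I anticipate is the verification that the supremum in the definition of each Banach-space norm can indeed be taken uniformly in $(x,z)$ (respectively in $z$ for part (1)) when checking exponential flatness of the cocycle: the constants $K_{j}, M_{j}$ (and $K_{p}, M_{p}$) produced in Theorems 1 and 2 must not deteriorate as $(x,z)$ ranges over the prescribed domain. This is already the content of the estimates (\ref{difference_u_dp_exp_small_epsilon}) and (\ref{diff_v_uj_exp_small}), where the bounds are given uniformly for $x \in X \cap D(0,(\delta_{1}/(\delta_{2}+\nu))^{1/\kappa})$ and $|\mathrm{Im}(z)| \leq \beta'$, so after choosing $\sigma \leq (\delta_{1}/(\delta_{2}+\nu))^{1/\kappa}$ for part (2) and recalling the definition (\ref{defin_sigma_t}) of $\sigma_{t}$ for part (1), the uniform bounds hold on the whole relevant domain and the two applications of Ramis--Sibuya go through without further obstacle.
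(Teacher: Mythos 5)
Your proposal is correct and follows essentially the same route as the paper's own proof: in both parts you form the cochain of Banach-valued holomorphic maps exactly as the paper does, use Theorems~1 and~2 for boundedness, invoke the cocycle bounds (\ref{diff_v_uj_exp_small}) and (\ref{difference_u_dp_exp_small_epsilon}) for exponential flatness of orders $\gamma$ and $\chi\kappa$ respectively, and conclude via the Ramis--Sibuya theorem. Your closing remark about the uniformity of the constants in $(x,z)$ (resp.\ $z$) is a sound sanity check, and is indeed already built into the statements of Theorems~1 and~2.
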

\begin{proof} Let us concentrate on the first item. We consider the family of functions
$\{ v^{\mathfrak{u}_{j}}(t,z,\epsilon) \}_{0 \leq j \leq \iota-1}$ constructed in Theorem 2. For each prescribed
value of $t$ inside the sector $\mathcal{T}^{\infty}$ and all $0 \leq j \leq \iota-1$, we set
$G_{j}(\epsilon) := z \mapsto \epsilon^{-\gamma_{0}}v^{\mathfrak{u}_{j}}(t,z,\epsilon)$ which defines a bounded
holomorphic function from $\mathcal{E}_{j}^{\infty} \cap D(0,\sigma_{t})$ into the Banach space
$\mathbb{F}$ of bounded holomorphic functions on $H_{\beta'}$ outfitted with the sup norm. Bearing in mind the
estimates (\ref{diff_v_uj_exp_small}), we deduce that the cocycle
$\Theta_{j}(\epsilon) = G_{j+1}(\epsilon) - G_{j}(\epsilon)$ is exponentially flat of order $\gamma$ on
$Z_{j} = \mathcal{E}_{j}^{\infty} \cap \mathcal{E}_{j+1}^{\infty} \cap D(0,\sigma_{t})$. According to
Theorem (R.S.) stated in
Section 5.1, there exists a formal power series $\hat{G}(\epsilon) \in \mathbb{F}[[\epsilon]]$ for which
the functions $G_{j}(\epsilon)$ are the $\gamma-$sums on $\mathcal{E}_{j}^{\infty} \cap D(0,\sigma_{t})$, for
all $0 \leq j \leq \iota-1$.

We next focus on the second item. For each fixed direction $\mathfrak{u}_{j}$, $0 \leq j \leq \iota-1$, we
consider the set of functions $\{ u^{\mathfrak{d}_{p},j}(t,z,\epsilon) \}_{0 \leq p \leq \varsigma-1}$
introduced in Theorem 1 for the choice of the forcing term $F^{\mathfrak{u}_{j}}(t,z,\epsilon)$ in the main equation
(\ref{main_PDE_u}). For all $0 \leq p \leq \varsigma-1$, we define this time
$\tilde{G}_{p}(\epsilon) := (x,z) \mapsto \epsilon^{m_0}u^{\mathfrak{d}_{p},j}(x \epsilon^{\chi - \alpha},z,\epsilon)$
that represents a bounded holomorphic function from $\mathcal{E}_{p}$ into $\mathbb{F}$ which stands now for
the Banach space of bounded holomorphic functions on $(X \cap D(0,\sigma)) \times H_{\beta'}$ supplied with
the sup norm. Keeping in view the estimates (\ref{difference_u_dp_exp_small_epsilon}), we find out that the
cocycle $\tilde{\Theta}_{p}(\epsilon) := \tilde{G}_{p+1}(\epsilon) - \tilde{G}_{p}(\epsilon)$ decays exponentially with order
$\chi \kappa$ on the crossing section $Z_{p} = \mathcal{E}_{p+1} \cap \mathcal{E}_{p}$. In agreement with
Theorem (R.S.) outlined above, there exists a formal power series $\hat{\tilde{G}}(\epsilon) \in \mathbb{F}[[\epsilon]]$
admitting the maps $\tilde{G}_{p}(\epsilon)$ as its $\chi \kappa-$sums on $\mathcal{E}_{p}$, for all
$0 \leq p \leq \varsigma-1$. This ends the proof of Theorem 3.
\end{proof}

In order to illustrate the theorem enounced above, we provide two examples of the main equation (\ref{main_PDE_u})
satisfying conjointly the constraints outlined in Theorem 1 and Theorem 2.\medskip

\noindent {\bf Examples.} We take $q=1$,$M=0$,$Q=0$ and $D=2$.\\
1) We first consider a situation for which $\kappa=1$. We select the powers of
$t$ and $\epsilon$ in the coefficients of (\ref{main_PDE_u}) as follows
\begin{multline*}
m_{0}=5,m_{1}=4,k_{1}=2,\mu_{0}=2,h_{0}=2,n_{0}=5,b_{0}=1,\Delta_{2}=3,d_{2}=4,\delta_{2}=2,\\
\Delta_{1}=3,d_{1}=3,\delta_{1}=1. 
\end{multline*}
In this setting, we choose $\kappa=1$,$\chi=6$,$\Gamma=1$,$\gamma_{0}=0$,$\gamma=3/2$,$\alpha=-1$ and $n_{F}=5$. 
Notice that we cannot sort $\chi$ smaller than 6 due to the inequality
(\ref{constraint_chi_kappa_hl_mul_m0_alpha_deltaD}). For these data, one can check that all the constraints asked
in Theorem 3 (combining the ones of Theorem 1 and 2)
on the coefficients of (\ref{main_PDE_u}) w.r.t $t$ and $\epsilon$ are fulfilled. For this special situation,
the main equation (\ref{main_PDE_u}) is displayed as follows
\begin{multline*}
(a_{1} \epsilon^{4}t^{2} + a_{0}\epsilon^{5})Q(\partial_{z})u(t,z,\epsilon) +
c_{0}\epsilon^{2}t^{2}Q_{1}(\partial_{z})u(t,z,\epsilon)Q_{2}(\partial_{z})u(t,z,\epsilon)\\
= b_{0}(z)\epsilon^{5}t + \frac{\epsilon^{5}}{(2\pi)^{1/2}}
\int_{-\infty}^{+\infty} \int_{L_{\theta_{F}}} \omega_{F}(u,m)( \exp( -\frac{t}{\epsilon^{3/2}}u ) - 1)
e^{izm} dudm \\
+ \epsilon^{3}t^{4}\partial_{t}^{2}R_{2}(\partial_{z})u(t,z,\epsilon) +
\epsilon^{3}t^{3}\partial_{t}R_{1}(\partial_{z})u(t,z,\epsilon)
\end{multline*}
This last equation can be divided by $\epsilon^{2}$ but not by any positive power of $t$. The resulting equation
is still singularly perturbed with an irregular singularity at $t=0$ and carries two movable turning points which
coalesce to 0 as $\epsilon$ tends to the origin.\\
2) The second example concerns the case $\kappa=2$. Namely, let us pick out the powers of $t$ and $\epsilon$ in the
following manner,
\begin{multline*}
m_{0}=9,m_{1}=8,k_{1}=4,\mu_{0}=2,h_{0}=4,n_{0}=9,b_{0}=3,\Delta_{2}=5,d_{2}=6,\delta_{2}=2,\\
\Delta_{1}=6,d_{1}=4,\delta_{1}=1.
\end{multline*}
Under this choice, we set $\kappa=2$,$\chi=12$,$\Gamma=5/2$,$\gamma_{0}=1$,$\gamma=3$,$\alpha=-1$ and $n_{F}=11$.
As in the first example, there is some lower bound for $\chi$ that cannot be taken less than 12 according to
(\ref{constraint_chi_kappa_hl_mul_m0_alpha_deltaD}). Under these conditions, all the requirements needed on
the coefficients of (\ref{main_PDE_u}) w.r.t $t$ and $\epsilon$ demanded in Theorem 3 are favorably completed.
In this case, (\ref{main_PDE_u}) is written as follows
\begin{multline*}
(a_{1} \epsilon^{8}t^{4} + a_{0}\epsilon^{9})Q(\partial_{z})u(t,z,\epsilon) +
c_{0}\epsilon^{2}t^{4}Q_{1}(\partial_{z})u(t,z,\epsilon)Q_{2}(\partial_{z})u(t,z,\epsilon)\\
= b_{0}(z)\epsilon^{9}t^{3} + \frac{\epsilon^{11}}{(2\pi)^{1/2}}
\int_{-\infty}^{+\infty} \int_{L_{\theta_{F}}} \omega_{F}(u,m)( \exp( -\frac{t}{\epsilon^{3}}u ) - 1)
e^{izm} dudm \\
+ \epsilon^{5}t^{6}\partial_{t}^{2}R_{2}(\partial_{z})u(t,z,\epsilon) +
\epsilon^{6}t^{4}\partial_{t}R_{1}(\partial_{z})u(t,z,\epsilon)
\end{multline*}
As above, one can factor out the power $\epsilon^{2}$ from the equation but not any power of $t$. The new
equation obtained remains singularly perturbed in $\epsilon$ with an irregular singularity at $t=0$ and still
possess four turning points which merge at 0 as $\epsilon$ reachs the origin.

\medskip

\noindent {\bf Remark 2.} According to the first remark disclosed in Section 2.2 which states that the forcing
term $F^{\mathfrak{u}_{j}}(t,z,\epsilon)$ solves the special ODE (\ref{ODE_F_theta_F}), we observe that
for all $0 \leq j \leq \iota-1$ and $0 \leq p \leq \varsigma-1$ both outer solution
$v^{\mathfrak{u}_{j}}(t,z,\epsilon)$ constructed in Theorem 2 and related inner solution
$u^{\mathfrak{d}_{p},j}(t,z,\epsilon)$ defined in Theorem 1 actually solve the next singularly PDE which
displays a similar shape as the main equation (\ref{main_PDE_u}) but possesses
rational coefficients in time $t$ and parameter $\epsilon$,
\begin{multline*}
F_{2}(-\epsilon^{\gamma} \partial_{t}) \left( (\sum_{l=1}^{q} a_{l} \epsilon^{m_{l}} t^{k_l} +
a_{0}\epsilon^{m_{0}}) Q(\partial_{z}) u(t,z,\epsilon) \right) \\
+ F_{2}(-\epsilon^{\gamma} \partial_{t})  \left( (\sum_{l=0}^{M} c_{l} \epsilon^{\mu_{l}} t^{h_{l}})Q_{1}(\partial_{z})u(t,z,\epsilon)
Q_{2}(\partial_{z})u(t,z,\epsilon) \right)  \\
= \sum_{j=0}^{Q} b_{j}(z) \epsilon^{n_j} F_{2}(-\epsilon^{\gamma} \partial_{t})t^{b_j}
+ \epsilon^{n_F}c_{F}(z) \left( \sum_{k=0}^{\mathrm{deg}(F_1)} F_{1,k}
\frac{k!}{(K_{F} + \frac{t}{\epsilon^{\gamma}})^{k+1}} - F_{2}(0)c_{F_{1},F_{2},\mathfrak{u}_{j}} \right)\\
+ \sum_{l=1}^{D} \epsilon^{\Delta_l} F_{2}(-\epsilon^{\gamma} \partial_{t})\left( t^{d_l} \partial_{t}^{\delta_l}
R_{l}(\partial_{z})u(t,z,\epsilon) \right).
\end{multline*}

\noindent {\bf Remark 3.} Let the constraints of Theorem 1 and Theorem 2 hold mutually. Select some
integers $0 \leq j \leq \iota-1$ and $0 \leq p \leq \varsigma-1$ for which
$\mathcal{E}_{j}^{\infty} \cap \mathcal{E}_{p} \neq \emptyset$. Then, for any fixed
$\epsilon \in \mathcal{E}_{j}^{\infty} \cap \mathcal{E}_{p}$, the outer solution
$v^{\mathfrak{u}_{j}}(t,z,\epsilon)$ is well defined for all $t \in \mathcal{T}_{\epsilon}^{\infty}$
and the inner solution $u^{\mathfrak{d}_{p},j}(t,z,\epsilon)$ for all
$t \in \mathcal{T}_{\epsilon,\chi - \alpha}$, provided that $z \in H_{\beta'}$. But it turns out that
\begin{equation}
\mathcal{T}_{\epsilon}^{\infty} \cap \mathcal{T}_{\epsilon,\chi - \alpha} = \emptyset
\label{domain_time_in_out_void}
\end{equation}
subjected to the fact that $|\epsilon|$ is taken small enough. Namely, let us first notice that
\begin{equation}
\chi - \alpha > \gamma - \Gamma. \label{chi_alpha_gamma_Gamma_ineq}
\end{equation}
According to (\ref{constraint_DeltaD_gamma}), (\ref{cond_deltaD_alpha}) and (\ref{cond_deltal_alpha}), we get that
\begin{equation}
\gamma\delta_{D} - \gamma_{0} = \alpha \delta_{D} \kappa + m_{0}. \label{emptyset_cond_A}
\end{equation}
Bearing in mind (\ref{second_cond_main_conv_eq_W}) and (\ref{cond_deltal_alpha}), we deduce
\begin{equation}
m_{0} + \gamma_{0} \geq (\gamma - \Gamma)\delta_{D}(\kappa+1) + \Gamma\delta_{D}. \label{emptyset_cond_B} 
\end{equation}
As an offshoot of (\ref{emptyset_cond_A}) and (\ref{emptyset_cond_B}), we obtain
$$ \gamma\delta_{D} \geq \alpha \delta_{D}\kappa + (\gamma-\Gamma)\delta_{D}(\kappa+1) + \Gamma \delta_{D}.$$
Since $\delta_{D},\kappa \geq 1$, we can factor out $\delta_{D}$ and $\kappa$ in this last inequality in order
to obtain
$$ 0 \geq \alpha + \gamma - \Gamma.$$
Since $\chi$ is assumed to be a real number larger than $\frac{1}{2\kappa}$, we deduce that
(\ref{chi_alpha_gamma_Gamma_ineq}) must hold.
In particular, we deduce that
$$ \rho_{X}|\epsilon|^{\chi - \alpha} < \frac{\Delta_{\nu,\delta_{1}^{\infty}}}{2}|\epsilon|^{\gamma-\Gamma}
< \Delta_{\nu,\delta_{1}^{\infty}}|\epsilon|^{\gamma-\Gamma} $$
for all $|\epsilon|$ small enough. This implies the empty intersection (\ref{domain_time_in_out_void}).

As a result, we observe some {\it scaling gap} in time $t$ between these two families of solutions. We postpone for
future investigations the study of possible analytic continuation in time $t$ and matching properties between
the inner and outer solutions constructed above.\medskip


\begin{thebibliography}{99}

\bibitem{ba} W. Balser, \emph{From divergent power series to analytic functions. Theory and application
of multisummable power series.} Lecture Notes in Mathematics, 1582. Springer-Verlag, Berlin, 1994. x+108 pp.
\bibitem{ba2} W. Balser, \emph{Formal power series and linear systems of meromorphic ordinary
differential equations.} Universitext. Springer-Verlag, New York, 2000. xviii+299 pp.
\bibitem{beor} C. Bender, S. Orszag, \emph{Advanced mathematical methods for scientists and engineers. I. 
Asymptotic methods and perturbation theory.} Reprint of the 1978 original. Springer-Verlag, New York, 1999.
xiv+593 pp.
\bibitem{cota} O. Costin, S. Tanveer, \emph{Existence and uniqueness for a class of nonlinear higher-order
partial differential equations in the complex plane.} Comm. Pure Appl. Math. 53 (2000), no. 9, 1092--1117.
\bibitem{ec} W. Eckhaus, \emph{Asymptotic analysis of singular perturbations.}
Studies in Mathematics and its Applications, 9. North-Holland Publishing Co., Amsterdam-New York, 1979. xi+287 pp.
\bibitem{frsc} A. Fruchard, R. Sch\"{a}fke, \emph{Composite asymptotic expansions.}
Lecture Notes in Mathematics, 2066. Springer, Heidelberg, 2013. x+161 pp.
\bibitem{hssi} P. Hsieh, Y. Sibuya, \emph{Basic theory of ordinary differential equations}. Universitext.
Springer-Verlag, New York, 1999.
\bibitem{la} P. Lagerstrom, \emph{Matched asymptotic expansions. Ideas and techniques.}
Applied Mathematical Sciences, 76. Springer-Verlag, New York, 1988. xii+250 pp.
\bibitem{lama} A. Lastra, S. Malek, \emph{Multi-level Gevrey solutions of singularly perturbed
linear partial differential equations}, Advances in Differential Equations (2016), vol. 21, no. 7/8, p. 767--800.
\bibitem{lama1} A. Lastra, S. Malek, \emph{On parametric Gevrey asymptotics for some
nonlinear initial value Cauchy problems}, J. Differential Equations 259 (2015), no. 10. p. 5220--5270.
\bibitem{lama2} A. Lastra, S. Malek, \emph{On parametric multisummable formal solutions to some nonlinear
initial value Cauchy problems,} Advances in Difference Equations 2015, 2015:200.
\bibitem{ma} S. Malek,  \emph{Gevrey asymptotics for some nonlinear integro-differential equations,}
J. Dynam. Control. Syst 16 (2010), no. 3. 377--406.
\bibitem{ma1} S. Malek, \emph{On singularly perturbed small step size difference-differential nonlinear PDEs.}
J. Difference Equ. Appl. 20 (2014), no. 1, 118--168.
\bibitem{ma2} S. Malek, \emph{On singular solutions to PDEs with turning point involving a quadratic nonlinearity},
preprint mp-arc 2016.
\bibitem{man} T. Mandai, \emph{Existence and nonexistence of null-solutions for some non-Fuchsian partial
differential operators with $T$-dependent coefficients.} Nagoya Math. J. 122 (1991), 115--137.
\bibitem{om} R. O'Malley, \emph{Singular perturbation methods for ordinary differential equations.}
Applied Mathematical Sciences, 89. Springer-Verlag, New York, 1991. viii+225 pp.
\bibitem{sk} L. Skinner, \emph{Singular perturbation theory.} Springer, New York, 2011. x+85 pp.
\bibitem{sutak} K. Suzuki, Y. Takei, \emph{Exact WKB analysis and multisummability -- A case study --}, RIMS
Kokyuroku 1861 (2013) 146--155.
\bibitem{tak} Y. Takei, \emph{On the multisummability of WKB solutions of certain singularly perturbed linear
ordinary differential equations.} Opuscula Math. 35 (2015), no. 5, 775--802.
\bibitem{taya} H. Tahara, H. Yamazawa, \emph{Multisummability of formal solutions to the Cauchy problem for some linear
partial differential equations}, Journal of Differential equations, Volume 255, Issue 10, 15 November 2013,
pages 3592--3637.
\bibitem{wa} W. Wasow, \emph{Linear turning point theory.} Applied Mathematical Sciences, 54.
Springer-Verlag, New York, 1985. ix+246 pp.
\end{thebibliography}
\end{document}